	\newtheoremstyle{slanted}
	{}
	{}
	{\slshape}
	{}
	{\bfseries}
	{.}
	{ }
	{}
	\theoremstyle{slanted}
	\newtheorem{theo}{Theorem}[section]
	\newtheorem{prop}[theo]{Proposition}
	\newtheorem{lemma}[theo]{Lemma}
	\newtheorem{definition}[theo]{Definition}
	\newtheorem{corollary}[theo]{Corollary}
	\newtheorem{remark}[theo]{Remark}
	\DeclareMathOperator{\Id}{Id}	
	\newcommand{\tend}[3][]{\xrightarrow[#2\to#3]{#1}}
	\newcommand{\EE}{\mathbb{E}}
	\newcommand{\ZZ}{\mathbb{Z}}
	\renewcommand{\AA}{\mathbb{A}}
	\newcommand{\RR}{\mathbb{R}}
	\newcommand{\PP}{\mathbb{P}}
	\newcommand{\A}{\mathscr{A}}
	\newcommand{\B}{\mathscr{B}}
	\newcommand{\C}{\mathscr{C}}
	\newcommand{\F}{\mathscr{F}}
	\newcommand{\G}{\mathscr{G}}
	\renewcommand{\L}{\mathscr{L}}
	\newcommand{\U}{\mathcal{U}}
	\newcommand{\D}{\mathcal{D}}
	\newcommand{\X}{\mathcal{X}}
	\newcommand{\perpep}{\rotatebox[origin=c]{90}{$\models$}}
	\newcommand{\perpc}[1]{\stackrel{#1}{\perpep}}
	\newcommand{\eps}{\varepsilon}
	\newcommand{\epst}{\tilde\eps}
	\newcommand{\te}{{\tau_{\eps}}}
	\newcommand{\fte}[1][]{\F_{#1}^{\te}}
	\newcommand{\ftm}[1][]{\F_{#1}^{\tau^{-1}}}
	\newcommand{\tribu}{\boldsymbol{\Sigma}}
	\newcommand{\isoproc}[2]{\stackrel{#1,#2}{\sim}}
	\newcommand{\immersed}{\prec}
	\newcommand{\immersible}{\underset{\sim}{\immersed}}
	\newcommand{\zA}{{0_{\AA}}}
	\newcommand{\cA}{|\AA|}
    \newcommand{\AAe}{\AA_{\text{env}}}
    \newcommand{\fenv}{f_\te^{\text{env}}}
    \newcommand{\Xenv}{X^{\text{env}}}
	\newcommand{\teenv}{{\tau_{\eps}^{\text{env}}}}
	\newcommand{\phiepenv}{{\varphi_\eps^{\text{env}}}}
	\newcommand{\phiep}{{\varphi_\eps}}
	\newcommand{\qm}{{\mathop{?}}}
\title[Filtrations from Cellular Automata]{Classification of Backward Filtrations and Factor Filtrations: Examples from Cellular Automata}
\author{Paul Lanthier and Thierry de la Rue}
\address{Laboratoire de Mathématiques Rapha\"el Salem,
Université de Rouen, CNRS,
Avenue de l'Université,
F76801 Saint \'Etienne du Rouvray, France.}
\email{planthier76@outlook.fr, Thierry.de-la-Rue@univ-rouen.fr}
\thanks{Research partially supported by the Regional Research Project Moustic (Normandie)}
\begin{document}
\bibliographystyle{amsplain}

\begin{abstract}
 We consider backward filtrations generated by processes coming from deterministic and probabilistic cellular automata. We prove that these filtrations are standard in the classical sense of Vershik's theory, but we also study them from another point of view that takes into account the measure-preserving action of the shift map, for which each sigma-algebra in the filtrations is invariant. This initiates what we call the dynamical classification of factor filtrations, and the examples we study show that this classification leads to different results.
 
\end{abstract}

\keywords{Filtrations, I-cosiness, measure-theoretic factors, cellular automata}
\subjclass[2010]{60J05;37A35;37B15}
\maketitle

\section{Introduction}

The topic of this paper is the study of some backward filtrations, or filtrations in discrete negative time, which are non-decreasing sequences of the form $\F=(\F_n)_{n\leq0}$ where each $\F_n$ is a sub-sigma-algebra on a given probability space. In the sequel, we will simply refer to them as \emph{filtrations}. 

In this work, all measure spaces are implicitely assumed to be Polish spaces equipped with their Borel sigma-algebra, and we only consider random variables taking their values in such spaces. If $X$ is a random variable defined on some probability space $(\Omega,\PP)$, taking values in a Polish space $E$, the \emph{law} of $X$ is the probability measure $\L(X)$ on $E$ which is the pushforward measure of $\PP$ by the measurable map $X$. We call \emph{copy} of $X$ any random variable $X'$, possibly defined on another probability space, such that $\L(X')=\L(X)$ (in particular, $X'$ takes its values in the same space as $X$).
Given a family $(X_m)_{m\in I}$ of random variables defined on a given probability space $(\Omega,\PP)$, we denote by $\tribu(X_m: m\in I)$ the sub-sigma algebra generated by the random variables $X_m$, $m\in I$ and the negligible sets.
All sigma-algebras are supposed to be complete and \emph{essentially separable}: they are generated (modulo negligible sets) by countably many events (or, equivalentely, by countably many random variables). 

Therefore, each filtration $\F=(\F_n)_{n\leq0}$ we consider can be generated by a \emph{process in negative time} (that is to say a family $X=(X_n)_{n\leq0}$ of random variables), which means that for each $n\leq0$, $\F_n=\tribu(X_m: m\leq n)$. The filtration $\F$ is then the mathematical object that describes the acquisition of information as the process $(X_n)_{n\leq0}$ evolves from $-\infty$ up to the present time. 
Different processes can generate the same filtration, 
and the classification of filtrations is roughly equivalent to answering the following question: given a filtration $\F$, can we find a ``nice'' process generating it?

In this direction, the simplest structure we can have is called a \emph{filtration of product type}, which means a filtration which can be generated by a sequence $X=(X_n)_{n\leq0}$ of \emph{independent} random variables. Then there is the slightly more general notion of \emph{standard filtration}: a filtration which is \emph{immersible} into a filtration of product type (see Definition~\ref{def:immersible} below).

Classification of backward filtrations was initiated by Vershik in the 1970's~\cite{Vershik1970}. His work, which was written in the language of ergodic theory, remained quite confidential until a new publication in the 1990's~\cite{vershik1994}. Then some authors started to bring the subject into probability theory where they found nice applications (see in particular~\cite{DFST1996,Tsirelson,Emery-Scha}). 
In the present paper, among other things we propose a return trip to ergodic theory by considering special families of filtrations on measure-theoretic dynamical systems (such systems are given by the action of a measure-preserving transformation $T$ on a probability space $(\Omega,\A,\PP)$). 
Filtrations in measure-theoretic dynamical systems have already been considered by many authors (see \textit{e.g.} \cite{Hoffman2000,HR2002,BMMSM2006,CL2017}), but in situations where the transformation acts ``in the direction of the filtration'': 
by this we mean that, in these works, the filtrations satisfy $\F_{n-1}=T^{-1}\F_n\varsubsetneq \F_n$ for each $n\le 0$. 
Here we adopt a transverse point of view: we consider examples where the sigma-algebras forming the filtration are invariant with respect to the underlying transformation: $\F_n=T^{-1}\F_n$ for each $n\le0$. We get what we call \emph{factor filtrations}, and we would like to classify these objects by taking into account the transverse action of the transformation. 
To emphasize the role of the underlying dynamical system, we will call the classification of factor filtrations the \emph{dynamical} classification of factor filtrations. By opposition, we will refer to the usual classification of filtrations as the \emph{static} classification.

The examples that we consider in this paper are all derived from the theory of cellular automata, where the underlying measure-preserving transformation is the shift of coordinates. We provide in this context natural examples of factor filtrations, some of which show a different behaviour depending on whether we look at them from a static or a dynamical point of view.

\subsection{Examples of filtrations built from cellular automata}

We define in this section the two filtrations which will be studied in the paper. The first one is built from an algebraic deterministic cellular automaton $\tau$, and the second one from a probabilistic cellular automaton (PCA) which is a random perturbation of $\tau$, depending on a parameter $\eps$ and denoted by $\te$. Both automata are one-dimensional (cells are indexed by $\ZZ$) and the state of each cell is an element of a fixed finite Abelian group $(\AA,+)$, which we assume non trivial: $\cA\ge2$. 

Throughout the paper, the index $n$ is viewed as the time during which the cellular automata evolve (represented vertically from top to bottom in the figures), whereas the index $i$ should be interpreted as the position within a given configuration (represented horizontally). For a given configuration $x=\bigl(x(i):i\in\ZZ\bigr)$ and $j\le k\in\ZZ$, we denote by $x[j,k]$ the restriction $\bigl(x(i):j\le i\le k\bigr)$ of $x$ to the sites between $j$ and $k$.

\subsubsection{The deterministic cellular automaton $\tau$ and the filtration $\ftm$ }
\label{sec:ftm}

We first define the deterministic cellular automaton $\tau:\AA^\ZZ\to\AA^\ZZ$. Given a configuration $x=\bigl(x(i)\bigr)_{i\in\ZZ}$, the image configuration $\tau x$ is 
given by 
\begin{equation}
 \label{def:tau}
 \forall i\in\ZZ,\ \tau x(i) := x(i)+x(i+1). 
\end{equation}
Observe that $\tau$ can be written as $\tau=\sigma+\Id$, where $\sigma$ is the left shift on $\AA^\ZZ$. 

For any finite set $S$, we denote by $\U_S$ the uniform probability measure on $S$. We consider on $\AA^\ZZ$ the product probability measure $\mu:=\U_\AA^{\otimes\ZZ}$, which is the normalized Haar measure on the compact Abelian group $\AA^\ZZ$. The measure $\mu$ is invariant by $\tau$.

On the probability space $(\AA^\ZZ,\mu)$, we construct a negative-time process $(X_n)_{n\le 0}$ as follows: we first consider the random variable $X_0$ taking values in $\AA^\ZZ$ and simply defined by the identity on $\AA^\ZZ$. Then we set for each integer $n\le0$,
\[
 X_n := \tau^{|n|} X_0.
\]
Since the law $\mu$ of $X_0$ is preserved by $\tau$, the process $(X_n)_{n\le 0}$ defined in this way is stationary. In particular, all the random variables $X_n$, $n\le0$, have the same law $\mu$. 

We denote by $\ftm$ the filtration generated by this process $(X_n)_{n\le0}$: for each $n\le0$, $\ftm[n]=\tribu(X_m:m\le n)=\tribu(X_m)$. We use $\tau^{-1}$ instead of $\tau$ in this notation to insist on the fact that the time of the process generating this filtration goes in the other direction than that of the cellular automaton $\tau$.

\subsubsection{The probabilistic cellular automaton $\te$ and the filtration $\fte$ }
\label{sec:fte}

We also introduce a random perturbation of the deterministic cellular automaton $\tau$, depending on a parameter $\eps$ which is the probability of making an error in the computation of the new state of a given cell.  More precisely, we fix $\eps>0$ and we define $\te$ as the Markov kernel on $\AA^\ZZ$ given by the following: for each $x\in\AA^\ZZ$, $\te(x,\cdot)$ 
%
is the law of a random variable of the form $\tau x+\xi$, where $\xi$ is a random error, wih law
\[
  \L(\xi) = \bigotimes_{i\in\ZZ}\Bigl((1-\eps)\delta_{0_\AA}+ \frac{\eps}{\cA -1}\sum_{a\in\AA\setminus\{0_\AA\}}\delta_a\Bigr).
\]
In the sequel, we will always assume that
\begin{equation}
 \label{eq:epsilon}
 \eps < \frac{[\AA|-1}{\cA },
\end{equation}
which is equivalent to $1-\eps>\eps/(\cA-1)$. In other words, we give more weight to $0_\AA$. Setting 
\begin{equation}
 \label{eq:epst}
 \epst:=\eps\dfrac{\cA }{\cA -1}<1,
\end{equation}
we can also write the law of the random error $\xi$ as 
\begin{equation}
 \label{eq:law_of_xi}
 \L(\xi) = \bigotimes_{i\in\ZZ}\Bigl((1-\epst)\delta_{0_\AA}+ \frac{\epst}{\cA }\sum_{a\in\AA}\delta_a\Bigr).
\end{equation}

The following lemma shows that  the probability measure $\mu$ on $\AA^\ZZ$ defined in Section~\ref{sec:ftm} is invariant by this Markov kernel (in fact, as we will prove later, it is the only one: see Corollary~\ref{cor:te_ergodic}). 

\begin{lemma}
 \label{lemma:mu_invariante_te}
 Let $X=\bigl(X(i):i\in\ZZ\bigr)$ and $\xi=\bigl(\xi(i):i\in\ZZ\bigr)$ be two independent random variables taking values in $\AA^\ZZ$, where $\L(X)=\mu$. Then $\L(\tau X + \xi)=\mu$.
\end{lemma}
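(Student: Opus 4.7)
The strategy is to exploit that $\mu$ is the normalized Haar measure on the compact Abelian group $\AA^\ZZ$ and that, as already noted, $\tau$ preserves $\mu$. The statement then reduces to the general principle that the convolution of Haar measure with any independent random variable on a compact group returns Haar measure.

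First, since $\L(X)=\mu$ and $\mu$ is $\tau$-invariant, we immediately get $\L(\tau X)=\mu$. Moreover, $\tau X$ is a deterministic function of $X$, so the assumption $\xi\perp X$ yields $\xi\perp \tau X$. It then suffices to condition on $\xi$: for every Borel set $A\subset\AA^\ZZ$ and every $z\in\AA^\ZZ$,
\[
 \PP(\tau X+\xi\in A\mid \xi=z) = \PP(\tau X\in A-z) = \mu(A-z) = \mu(A),
\]
the last equality being the translation invariance of Haar measure. Integrating against $\L(\xi)$ gives $\L(\tau X+\xi)=\mu$.

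There is essentially no obstacle: the only point requiring a line of justification is the implication $\xi\perp X\Rightarrow\xi\perp\tau X$, which is automatic. If one wished to avoid any explicit mention of Haar measure, one could instead verify the claim on finite cylinders: for any $j\le k$, the window $(\tau X)[j,k]$ is uniform on the finite group $\AA^{k-j+1}$ (the marginals of $\mu$ being uniform), and $\xi[j,k]$ is an independent $\AA^{k-j+1}$-valued random variable, so the sum $(\tau X+\xi)[j,k]$ is uniform on $\AA^{k-j+1}$. Since this holds for every window, $\L(\tau X+\xi)$ must coincide with the product measure $\mu$.
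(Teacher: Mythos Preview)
Your proof is correct and follows essentially the same approach as the paper: reduce to $\L(\tau X)=\mu$ via $\tau$-invariance, then condition on $\xi$ and use translation invariance of the Haar measure. The additional cylinder argument you sketch is a valid alternative but is not needed.
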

\begin{proof}
We already know that $\L(\tau X)=\mu$. By independence of $\tau X$ and $\xi$, and since $\mu$ is the Haar measure on $\AA^\ZZ$, for $a\in\AA^\ZZ$ we have
\[
 \L(\tau X+\xi | \xi=a) = \L(\tau X + a) =\mu.
\]
\end{proof}

(Note that the proof of the lemma is valid regardless of the law of the error $\xi$, and that it also proves the independence of $\tau X+\xi$ and $\xi$.)

By invariance of $\mu$ under $\te$, we can construct on some probability space $(\Omega,\PP)$ a stationary Markov chain $(X_n)_{n\in\ZZ}$, where for each $n\in\ZZ$, $X_n$ takes its values in $\AA^\ZZ$, the law of $X_n$ is  $\L(X_n)=\mu$, and for each $x\in\AA^\ZZ$, the conditional distribution of $X_{n+1}$ given $X_n=x$ is 
\[ \L(X_{n+1} | X_n=x) = \te(x,\cdot). \]
We will denote by $\fte$ the filtration generated by the negative-time part of such a process: for each $n\le 0$, $\fte[n]=\Sigma(X_m:m\le n)$. If we want to define $\fte$ in a canonical way, we can always assume that in this construction, $\Omega=(\AA^\ZZ)^\ZZ$ and $\PP$ is the law of the stationary Markov chain on $\Omega$.

\section{Usual (static) classification of filtrations}

\subsection{A very short abstract of the theory} 

We recall the main points in the usual theory of classification of filtrations. We refer to~\cite{Emery-Scha,laurent2011} for details. 

We start with the following  definition of isomorphic filtrations, which is of course equivalent to the definition provided in the cited references.

\begin{definition}[Isomorphism of filtrations]
 \label{def:isomorphism}
 Let $\F$ and $\F'$  be two filtrations, possibly defined on two different probability spaces. We say that they are \emph{isomorphic} if we can find two processes $X=(X_n)_{n\leq0}$ and $X'=(X'_n)_{n\leq0}$ with the same law generating respectively $\F$ and $\F'$. We write in this case $\F\sim\F'$, or $\F\isoproc{X}{X'}\F'$ to specify the processes involved in the isomorphism. 
\end{definition}

It may not seem obvious in the above formulation that this notion of isomorphism is transitive. We refer the reader who would like details on this point to~\cite[Section~1.2.1]{LanthierPhD}.

Let $\F$ and $\F'$ be two isomorphic filtrations, and let $X$ and $X'$ be two processes such that $\F\isoproc{X}{X'}\F'$. If $Y$ is an $\F_0$-measurable random variable, it is a classical result in measure theory that there exists some measurable map $\phi$ satisfying $Y=\phi(X)$ a.s. Then the isomorphism implicitely provides an $\F'_0$-measurable copy of $Y$, namely the random variable $Y':=\phi(X')$. Of course, this copy depends on the choice of the processes $X$ and $X'$ used in the isomorphism, but once these processes are fixed, the copy $Y'$ of $Y$ is almost surely unique (because the map $\phi$ is almost surely unique with respect to the law of $X$).

\medskip

Let $\A$, $\B$ and $\C$ be 3 sub-sigma-algebras on the same probability space, with $\C\subset\A\cap\B$. We recall that $\A$ and $\B$ are \emph{independent conditionally to $\C$} if, whenever $X$ and $Y$ are bounded real random variables, respectively $\A$ and $\B$ measurable, we have 
\[
  \EE\bigl[XY | \C\bigr] = \EE\bigl[X | \C\bigr]\, \EE\bigl[Y | \C\bigr] \quad(\text{a.s.})
\]
We note in this case 
\[ \A \perpc{\C} \B. \]

\begin{definition}[Immersion]
\label{def:immersion}
 Let $\F$ and $\G$ be two filtrations on the same probability space. 
 We say that $\F$ is \emph{immersed} in $\G$ if the following conditions are satisfied:
 \begin{itemize}
  \item for each $n\le 0$, $\F_n\subset \G_n$ ($\F$ is included in $\G$);
  \item for each $n\le -1$, $\F_{n+1}\perpc{\F_n} \G_n$.
 \end{itemize}
We write in this case $\F\immersed\G$. 

We say that $\F$ and $\G$ are \emph{jointly immersed} if both $\F$ and $\G$ are immersed in the filtration $\F\vee\G$.
\end{definition}

\begin{definition}[Immersibility]
\label{def:immersible}
 Let $\F$ and $\G$ be two filtrations, possibly defined on different probability spaces. 
 We say that $\F$ is \emph{immersible} in $\G$ if there exists a filtration $\F'$ immersed in $\G$ such that $\F'$ and $\F$ are isomorphic.
We write in this case $\F\immersible\G$. 
\end{definition}

We can now define the following basic hierarchy of properties for filtrations.

\begin{definition}
 The filtration $\F$ is said to be \emph{of product type} if it can be generated by a process formed of independent random variables.
 
 The filtration $\F$ is said to be \emph{standard} if it is immersible in a filtration of product type.
 
 The filtration $\F$ is said to be \emph{Kolmogorovian} if its tail sigma-algebra
 \[ \F_{-\infty} := \bigcap_{n\le 0}\F_n \]
 is trivial (it only contains events of probability 0 or 1).
\end{definition}

It is a direct consequence of the definitions and of Kolmogorov 0-1 law that the following chain of implications holds:
\[ \text{Product type} \Longrightarrow \text{Standard} \Longrightarrow \text{Kolmogorovian}.\]
A simple example of a standard filtration which is not of product type has been provided by Vinokurov
(see~\cite{Emery-Scha}). The construcion of a Kolmogorovian filtration which is not standard was one of the first spectacular achievements of Vershik in this theory.

\subsection{Filtrations of Product type: Example of $\ftm$}

We now come back to the filtration $\ftm$ defined in Section~\ref{sec:ftm}, and we use here the same notations as in this section. In particular, the process $X=(X_n)_{n\le 0}$ generating $\ftm$ is a process where each coordinate $X_n$ takes its values in $\AA^\ZZ$, $\L(X_n)=\mu$, and $X_{n-1}=\tau X_n$. 

The purpose of this section is to prove the following result.

\begin{theo}
 \label{thm:ftm_product_type}
 The filtration $\ftm$ is of product type.
\end{theo}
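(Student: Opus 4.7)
The plan is to exhibit an explicit i.i.d.\ sequence of innovations $(\xi_n)_{n\le 0}$ generating the filtration $\ftm$. A word of caution: the naive choice $\xi_n:=X_n(0)$ yields an i.i.d.\ family (see below) but does \emph{not} generate $\ftm$, because each $X_m(0)=\sum_{j=0}^{|m|}\binom{|m|}{j}X_0(j)$ depends only on the nonnegative coordinates of $X_0$ and is therefore $\mu$-independent of $X_0(-1)$. To fix this I would instead set $\xi_n:=X_n(c_n)$ with the \emph{centered} index $c_n:=-\lfloor |n|/2\rfloor$.

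For the i.i.d.\ property, the key algebraic fact is that for every $c\in\ZZ$ the map
\[
 \phi_c : \AA^\ZZ\longrightarrow \AA\times\AA^\ZZ,\qquad x\mapsto\bigl(x(c),\tau x\bigr),
\]
is a continuous isomorphism of compact abelian groups. Injectivity follows from the description $\ker\tau=\{k:k(i+1)=-k(i)\}$: if $x(c)=0$ and $\tau x=0$, the recurrence starting from site $c$ forces $x\equiv 0$. Surjectivity is obtained by solving $x(i+1)=y(i)-x(i)$ in both directions from a prescribed value at $c$. By uniqueness of the Haar measure, $(\phi_c)_{*}\mu=\U_\AA\otimes\mu$, so $\xi_n=X_n(c_n)$ is $\U_\AA$-distributed and independent of $\tau X_n=X_{n-1}$, hence of $\ftm[n-1]=\tribu(X_{n-1})$. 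Since $\xi_m\in\ftm[m]\subset\ftm[n-1]$ for $m\le n-1$, this gives $\xi_n\perp\tribu(\xi_m:m\le n-1)$; an easy induction then yields that $(\xi_n)_{n\le 0}$ is i.i.d.\ uniform on $\AA$.

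For the generating property $\tribu(\xi_m:m\le n)=\ftm[n]$ (treating $n=0$; other $n$ are identical), the inclusion $\subset$ is immediate. For the reverse, expand
\[
 \xi_{-k} \;=\; (\tau^{k}X_0)(c_{-k}) \;=\; \sum_{j=0}^{k}\binom{k}{j}\,X_0\bigl(-\lfloor k/2\rfloor + j\bigr),
\]
a linear combination of the coordinates of $X_0$ in the symmetric window $[-\lfloor k/2\rfloor,\lceil k/2\rceil]$, whose two extremal coefficients $\binom{k}{0}$ and $\binom{k}{k}$ both equal $1$. Iteratively: $\xi_0$ gives $X_0(0)$; for odd $k\ge 1$ the window expands to the right and $\xi_{-k}$ gives $X_0(\lceil k/2\rceil)$; for even $k\ge 2$ it expands to the left and $\xi_{-k}$ gives $X_0(-k/2)$. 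At each step the newly recovered coordinate of $X_0$ is the one with the extremal coefficient $1$, while every other coordinate appearing in $\xi_{-k}$ has already been written in terms of earlier $\xi_m$'s.

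The main obstacle is precisely this step-by-step invertibility: because the pivot is always $1$, no division is needed and the argument carries through over the arbitrary finite abelian group $\AA$, regardless of its torsion structure. It then follows that every $X_0(j)$, $j\in\ZZ$, belongs to $\tribu(\xi_m:m\le 0)$, so $\tribu(\xi_m:m\le 0)=\tribu(X_0)=\ftm[0]$, completing the proof.
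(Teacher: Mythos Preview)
Your proof is correct and follows essentially the same route as the paper: the same innovation sequence $\xi_n=X_n(-\lfloor |n|/2\rfloor)$ is chosen, and the two required properties (independence and generation of $\ftm$) are established. The only differences are in presentation---you prove independence via the group isomorphism $x\mapsto(x(c),\tau x)$ and Haar measure where the paper counts preimages directly, and you recover the coordinates of $X_n$ through the binomial expansion of $\tau^k$ where the paper uses a recursive block-reconstruction lemma valid for any path $(i_\ell)$ with $i_{\ell+1}\in\{i_\ell,i_\ell+1\}$---but the underlying ideas coincide.
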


The above theorem is a direct consequence of the two following lemmas.

\begin{lemma}
\label{lemma:Xn_in}
  For each sequence $(i_n)_{n\le 0}$ of integers, the random variables $X_n(i_n)$, $n\le 0$, are independent.
\end{lemma}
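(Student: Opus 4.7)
The strategy is to isolate a one-step independence and then iterate it by a time-shift. The key observation is that for any fixed $i_0\in\ZZ$, the map
\[
 \Phi : \AA^\ZZ \to \AA\times\AA^\ZZ, \qquad \Phi(x) = (x(i_0),\tau x),
\]
is a continuous group isomorphism. It is a group homomorphism by $\ZZ$-linearity of $\tau$, and its bijectivity comes from rewriting $(\tau x)(j)=y(j)$ as $x(j+1)=y(j)-x(j)$: given $(a,y)\in\AA\times\AA^\ZZ$, the initial condition $x(i_0)=a$ together with this recursion and its backward version $x(j-1)=y(j-1)-x(j)$ determines $x(j)$ uniquely for every $j\in\ZZ$. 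Since $\mu$ is the normalized Haar measure on the compact abelian group $\AA^\ZZ$, the pushforward $\Phi_*\mu$ is the normalized Haar measure on $\AA\times\AA^\ZZ$, which factors as $\U_\AA\otimes\mu$. Translated back to random variables, this means that $X_0(i_0)$ is uniform on $\AA$ and independent of $X_{-1}=\tau X_0$.

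Because each $X_n$ with $n\le -1$ equals $\tau^{|n|-1}X_{-1}$, the whole tail $(X_n)_{n\le -1}$ is $\tribu(X_{-1})$-measurable, so the one-step independence upgrades to $X_0(i_0)\perp\tribu(X_n:n\le -1)$. I then prove by induction on $r\ge 1$ that for \emph{every} sequence of indices $(i_n)_{n\le 0}$, the tuple $\bigl(X_0(i_0),X_{-1}(i_{-1}),\ldots,X_{-(r-1)}(i_{-(r-1)})\bigr)$ is iid uniform on $\AA$. The case $r=1$ is immediate from $\L(X_0)=\mu$. For the inductive step, the time-shifted process $\tilde X_n\egdef X_{n-1}$ satisfies $\tilde X_0\sim\mu$ and $\tilde X_{n-1}=\tau\tilde X_n$, so it has the same distribution as $(X_n)_{n\le 0}$; applying the inductive hypothesis to $(\tilde X_n)$ with indices $i_{-1},i_{-2},\ldots,i_{-r}$ yields that $\bigl(X_{-1}(i_{-1}),\ldots,X_{-r}(i_{-r})\bigr)$ is iid uniform, and combining this with the independence from $X_0(i_0)$ obtained above gives the claim at level $r+1$.

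To pass from this to the statement of the lemma, one uses that independence of an infinite family is equivalent to independence of every finite subfamily, and any such subfamily embeds into an initial block $\{0,-1,\ldots,-(r-1)\}$ by assigning arbitrary dummy values at the missing times. The only moderately delicate point in this whole argument is verifying that $\Phi$ is a bijection, which amounts to a one-dimensional recurrence in the finite abelian group $\AA$; everything else is the standard principle that a topological group automorphism of a compact group carries Haar measure to Haar measure, together with the remark that the tail of the process is measurable with respect to $X_{-1}$.
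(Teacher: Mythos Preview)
Your proof is correct, and the overall architecture matches the paper's: establish that $X_0(i_0)$ is independent of $X_{-1}=\tau X_0$, observe that the whole past $(X_n)_{n\le -1}$ is $\tribu(X_{-1})$-measurable, and then iterate by time-shift. The difference lies in how the one-step independence is obtained. The paper argues combinatorially: for any window $[j,k]$ with $i\in\{j,\ldots,k+1\}$ and any block $w$, the $\cA$ preimages of $w$ under $\tau$ restricted to $[j,k+1]$ are in bijection with the possible values of $Y(i)$, and all have the same $\mu$-probability; letting the window grow gives $Y(i)\perp\tau Y$. You instead package this as the statement that $\Phi(x)=(x(i_0),\tau x)$ is a topological group isomorphism $\AA^\ZZ\to\AA\times\AA^\ZZ$, so it pushes Haar measure to Haar measure, i.e.\ to $\U_\AA\otimes\mu$. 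Your route is cleaner and, notably, does not use finiteness of $\AA$: it goes through verbatim for any compact metrizable abelian group, which is relevant since the paper later considers that generality (Section~3.2.1). The paper's counting argument, by contrast, is more elementary and self-contained for readers unfamiliar with Haar measure.
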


\begin{lemma}
  \label{lemma:generating_ftm}
  Consider the sequence $(i_n)_{n\le 0}$ of integers defined by $i_n:=-\lfloor |n|/2 \rfloor$. Then the process $\bigl(X_n(i_n)\bigr)_{n\le 0}$ generates the filtration $\ftm$.
\end{lemma}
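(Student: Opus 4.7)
The plan is to show that $\tribu(X_m(i_m): m\le n)=\ftm_n$ for every $n\le 0$. Since $X_{m-1}=\tau X_m$ for all $m$, every $X_m$ with $m\le n$ is a deterministic function of $X_n$, and therefore $\ftm_n=\tribu(X_n)$. The inclusion ``$\subset$'' is then immediate, so it suffices to prove that every coordinate $X_n(i)$, $i\in\ZZ$, is measurable with respect to $\tribu(X_m(i_m): m\le n)$.

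The main algebraic tool is the identity
\[
X_{n-k}(j)\ =\ \tau^k X_n(j)\ =\ \sum_{\ell=0}^{k}\binom{k}{\ell}\,X_n(j+\ell),\qquad k\ge 0,\ j\in\ZZ,
\]
where the integer coefficient $\binom{k}{\ell}$ is interpreted via the natural $\ZZ$-module structure of $\AA$; this follows from $\tau=\Id+\sigma$ and the commutation of $\Id$ with the shift $\sigma$. Specialising to $j=i_{n-k}$ expresses $X_{n-k}(i_{n-k})$ as a combination of the $k+1$ consecutive coordinates $X_n(i_{n-k}),\dots,X_n(i_{n-k}+k)$, whose extremal coefficients $\binom{k}{0}$ and $\binom{k}{k}$ both equal $1$, i.e.\ the identity endomorphism of $\AA$.

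I would then conclude by induction on $k\ge 0$ that each $X_n(i_{n-k}+\ell)$, $0\le\ell\le k$, lies in $\tribu(X_m(i_m):n-k\le m\le n)$. The base case $k=0$ is trivial. For the inductive step, a short parity check using $i_q=-\lfloor|q|/2\rfloor$ shows that $i_{n-k}-i_{n-k+1}\in\{-1,0\}$, so the window $[i_{n-k},i_{n-k}+k]$ extends $[i_{n-k+1},i_{n-k+1}+k-1]$ by exactly one new integer, either at its left end (when $i_{n-k}=i_{n-k+1}-1$) or at its right end (when $i_{n-k}=i_{n-k+1}$). In both cases the newly added coordinate appears in the linear relation above with coefficient $\binom{k}{0}=1$ or $\binom{k}{k}=1$, and can therefore be solved for from $X_{n-k}(i_{n-k})$ and the previously known values. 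Since the endpoints of $[i_{n-k},i_{n-k}+k]$ tend to $\pm\infty$ as $k\to\infty$, this recovers $X_n(i)$ for every $i\in\ZZ$ and finishes the proof. The only slightly delicate part of the argument is this parity bookkeeping, which guarantees exactly one new coordinate per step; this is precisely what the choice $i_n=-\lfloor|n|/2\rfloor$ is engineered for.
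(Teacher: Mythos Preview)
Your argument is correct. Both your proof and the paper's proceed by showing inductively that, from the data $X_{n}(i_n),X_{n-1}(i_{n-1}),\dots,X_{n-k}(i_{n-k})$, one can recover the full block $X_n[i_{n-k},i_{n-k}+k]$, the crucial point being that the condition $i_{\ell}-i_{\ell+1}\in\{-1,0\}$ makes the window grow by exactly one at each step.

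The difference lies in the tool used for the inductive step. The paper (via its Lemma~\ref{lemma:chemin}) works with the one-step relation $X_{n-1}(j)=X_n(j)+X_n(j+1)$ and its local inverse $\tau_{a,i}^{-1}$ introduced earlier: knowing a block of $X_{n-1}$ together with a single coordinate of $X_n$ determines the corresponding (one-longer) block of $X_n$. You instead expand $\tau^k=(\Id+\sigma)^k$ binomially and solve directly for the single new coordinate of $X_n$, relying on the fact that the extremal coefficients $\binom{k}{0}$ and $\binom{k}{k}$ equal~$1$ and hence act invertibly on the Abelian group $\AA$. Your route avoids introducing the auxiliary map $\tau_{a,i}^{-1}$ and makes the algebraic mechanism transparent, at the minor cost of carrying the binomial formula; the paper's route keeps the recursion one level at a time and reuses notation already set up for Lemma~\ref{lemma:Xn_in}. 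Either way, the parity check on $i_n=-\lfloor|n|/2\rfloor$ and the fact that the window endpoints tend to $\pm\infty$ are exactly the same.
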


\begin{proof}[Proof of Lemma~\ref{lemma:Xn_in}]
  Let us first consider, on a probability space $(\Omega,\PP)$, a random variable $Y=\bigl(Y(i):i\in\ZZ\bigr)$ taking values in $\AA^\ZZ$, and such that $\L(Y)=\mu$. Let us fix some integers $i,j,k$ such that $j\le k$ and $i\in\{j,\ldots,k,k+1\}$. For each block $w\in\AA^{\{j,\ldots,k\}}$ and each $a\in\AA$, it is straightforward to check from the construction of $\tau$ that there exists a unique block $w'\in\AA^{\{j,\ldots,k,k+1\}}$ such that
 \parbox[t]{\textwidth}{%
 \begin{itemize}
   \item $w'(i)=a$;
   \item $\bigl( w'(h)+w'(h+1):j\le h\le k\bigr)=w$.
  \end{itemize}
  }
  Let us denote by $\tau_{a,i}^{-1}(w)$ this block $w'$. We then have the following equivalence:
\[
 \tau Y[j,k]=w \Longleftrightarrow \exists a\in\AA,\ Y[j,k+1]=\tau_{a,i}^{-1}(w).
\]
But by construction of $\mu$, all the $\cA $ events $\bigl(Y[j,k+1]=\tau_{a,i}^{-1}(w)\bigr)$, $a\in\AA$ have the same probability $\cA^{-(k-j+2)}$ (and are of course disjoint). It follows that, for each $a\in\AA$, 
\[
 \PP\bigl(Y(i)=a\,|\,\tau Y[j,k]=w\bigr) = \PP\Bigl(Y[j,k+1]=\tau_{a,i}^{-1}(w)\,|\,\tau Y[j,k]=w\Bigr) = \frac{1}{\cA }.
\]
Since this holds for any block $w\in\AA^{\{j,\ldots,k\}}$, this proves that $Y(i)$ is independent of $\tau Y[j,k]$. Then, since this is true for each $j,k$ such that $i\in\{j,\ldots,k,k+1\}$, $Y(i)$ is independent of $\tau Y$.

Let us apply this with $Y=X_n$ for some $n\le 0$: we get that, for each $i_n\in\ZZ$, $X_n(i_n)$ is independent of $\tau X_n=X_{n-1}$. Now if we have an arbitrary sequence $(i_n)_{n\le 0}$ of integers, we note that for each $n\ge 0$, the random variables $X_m(i_m)$, $m\le n-1$ are all $\ftm[n-1]$-measurable, hence $X_n(i_n)$ is independent of $\tribu\bigl(X_m(i_m):m\le n-1\bigr)$.
\end{proof}

Lemma~\ref{lemma:generating_ftm} will be derived from the following result (see Figure~\ref{fig:chemin}).

\begin{figure}[h]
 \includegraphics[width=10cm]{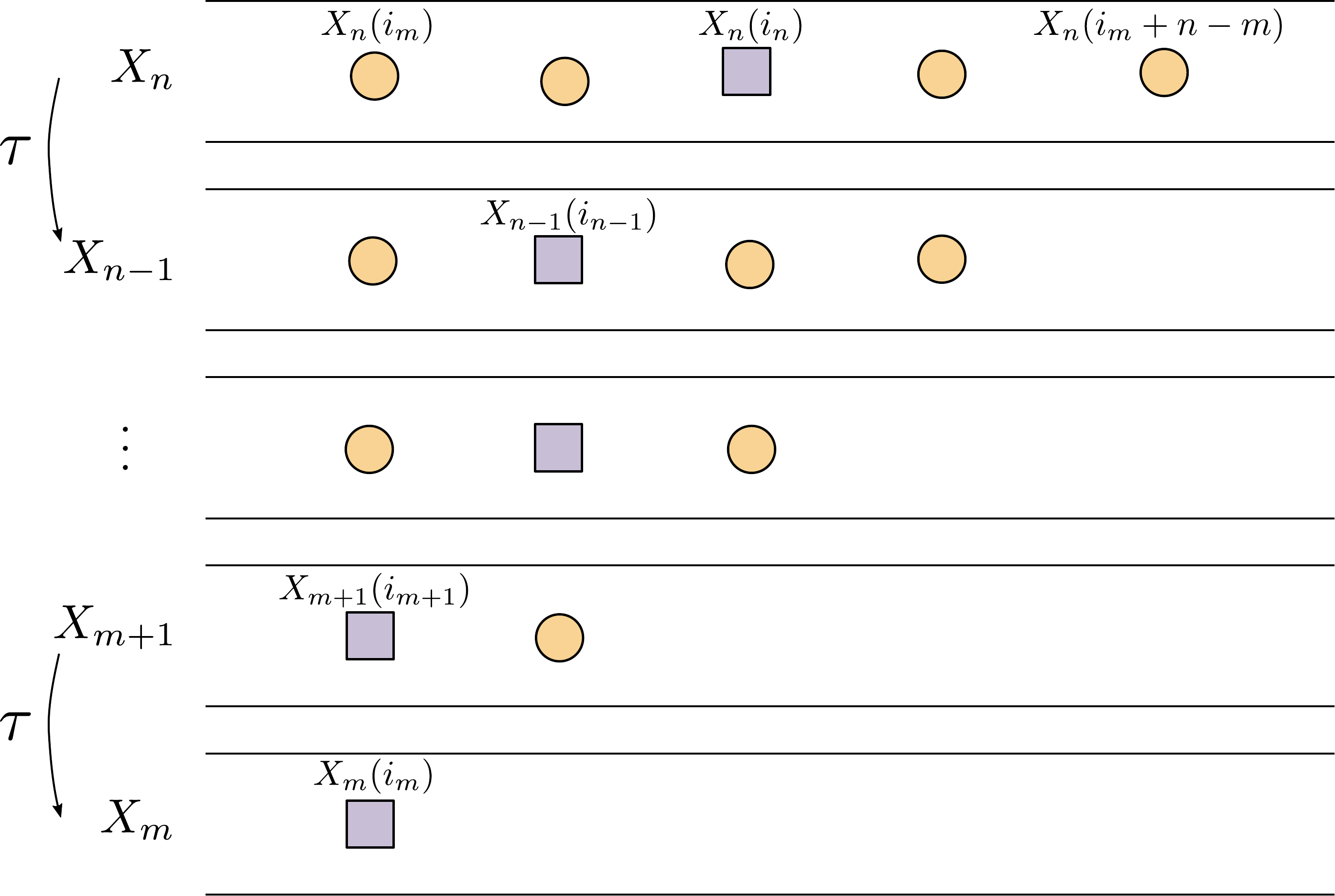}
\caption{Illustration of Lemma~\ref{lemma:chemin}: the variables marked with a circle can be computed from the ones marked with a square.}
\label{fig:chemin}
\end{figure} 

\begin{lemma}
\label{lemma:chemin}
Consider integers $m \leq n  \leq 0$, and let $(i_m,i_{m+1},...,i_n)$ be a finite sequence of integers such that, for each $m \leq \ell \leq n-1$, 
\begin{equation}
 \label{eq:condition_in}
 i_{\ell+1} = i_{\ell} \quad\text{or}\quad i_{\ell+1} = i_{\ell}+1.
\end{equation} 
Then $X_n[i_m,i_m+n-m]$ is measurable with respect to $\Sigma\bigl( X_m(i_m),..., X_n(i_n)\bigr)$.
\end{lemma}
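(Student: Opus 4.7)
The plan is to prove the lemma by induction on $k := n-m$, with a step that uses the relation $X_{\ell-1}(j)=X_\ell(j)+X_\ell(j+1)$ together with the constraint \eqref{eq:condition_in} to reconstruct one additional cell at each layer.

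For the base case $k=0$ the statement is vacuous: $X_m[i_m,i_m]$ \emph{is} $X_m(i_m)$. For the inductive step, I assume that $X_\ell[i_m,i_m+\ell-m]$ is measurable with respect to $\tribu(X_m(i_m),\ldots,X_\ell(i_\ell))$ and pass from time $\ell$ to time $\ell+1$. From $\tau X_{\ell+1}=X_\ell$ we get the system of $\ell-m+1$ equations
\[
X_\ell(j) = X_{\ell+1}(j)+X_{\ell+1}(j+1), \qquad j=i_m,\ldots,i_m+\ell-m,
\]
in the $\ell-m+2$ unknowns $X_{\ell+1}(i_m),\ldots,X_{\ell+1}(i_m+\ell-m+1)$. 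Knowing any one of these unknowns determines all the others by alternately subtracting inside the additive group $\AA$: propagate to the left via $X_{\ell+1}(j)=X_\ell(j)-X_{\ell+1}(j+1)$ and to the right via $X_{\ell+1}(j+1)=X_\ell(j)-X_{\ell+1}(j)$.

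The point where \eqref{eq:condition_in} enters is exactly here: it forces $i_{\ell+1}$ to lie in the interval $[i_m,i_m+\ell-m+1]$, because iterating the inequalities $i_k\le i_{k+1}\le i_k+1$ from $k=m$ to $k=\ell$ yields $i_m\le i_{\ell+1}\le i_m+(\ell+1-m)$. Hence the extra piece of information $X_{\ell+1}(i_{\ell+1})$ provided by going one step further in the process falls inside the block we want to reconstruct, and suffices to solve the above system. Combined with the induction hypothesis, this shows that $X_{\ell+1}[i_m,i_m+\ell+1-m]$ is measurable with respect to $\tribu(X_m(i_m),\ldots,X_{\ell+1}(i_{\ell+1}))$, closing the induction at $\ell+1=n$.

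No serious obstacle is expected: the only substantive check is the range condition $i_{\ell+1}\in[i_m,i_m+\ell-m+1]$, which is a one-line telescoping argument from \eqref{eq:condition_in}. Everything else is the elementary observation that, in an Abelian group, the map $(a_0,\ldots,a_r)\mapsto(a_0+a_1,\ldots,a_{r-1}+a_r)$ has a one-dimensional fiber parametrised by any single coordinate $a_i$.
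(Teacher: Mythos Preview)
Your proof is correct and follows essentially the same route as the paper: induction on $n-m$, with the inductive step using that knowing $X_{\ell}[i_m,i_m+\ell-m]$ together with any single entry $X_{\ell+1}(i_{\ell+1})$ inside the block $[i_m,i_m+\ell-m+1]$ determines all of $X_{\ell+1}[i_m,i_m+\ell-m+1]$. The paper packages this reconstruction into the notation $\tau_{a,i}^{-1}(w)$ introduced earlier, writing $X_n[i_m,i_m+n-m]=\tau_{X_n(i_n),i_n}^{-1}\bigl(X_{n-1}[i_m,i_m+n-1-m]\bigr)$, whereas you spell out the left/right propagation explicitly; the content is the same.
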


\begin{proof}
For a fixed $m$, we prove the result by induction on $n$. If $n=m$ the result is obvious. Assume now that, for some $m+1\le n\le 0$, the result holds up to $n-1$. Then 
$X_{n-1}[i_m,i_m+n-1-m]$ is measurable with respect to $\Sigma\bigl( X_m(i_m),..., X_{n-1}(i_{n-1})\bigr)$. Remembering the notation $\tau_{a,i}^{-1}(w)$ from the proof of Lemma~\ref{lemma:Xn_in}, and since by assumption we have $i_n\in \{i_m,\ldots,i_m+n-m\}$, we can then write
\[
   X_n[i_m,i_m+n-m] = \tau_{X_n(i_n),i_n}^{-1}\bigl(X_{n-1}[i_m,i_m+n-1-m]\bigr),
\]
hence $X_n[i_m,i_m+n-m]$ is measurable with respect to $\Sigma\bigl( X_m(i_m),..., X_n(i_n)\bigr)$ as claimed.
\end{proof}

\begin{proof}[Proof of Lemma~\ref{lemma:generating_ftm}]
 The sequence $(i_n)$ defined in the statement of Lemma~\ref{lemma:generating_ftm} obviously satisfies~\eqref{eq:condition_in}. Moreover, we have $i_m\to -\infty$ as $m\to -\infty$, and for any fixed $n\le 0$, $i_m+n-m\to +\infty$ as $m\to -\infty$. By application of Lemma~\ref{lemma:chemin}, it follows that $X_n$ is measurable with respect to $\tribu\bigl(X_m(i_m):m\le n\bigr)$. But conversely, all the random variables $X_m(i_m)$, $m\le n$, are $\ftm[n]$-measurable. Hence these variables generate $\ftm[n]$.
\end{proof}

\subsection{Standardness and I-cosiness}

The example of the filtration $\ftm$ is very special, as it is not so hard to explicit a process with independent coordinates which generates the filtration. In general, when we have a standard filtration, it may be very hard to find such a process from which the filtration is built. 
This is one of the reasons why several criteria of standardness have been developped, allowing to prove the standardness of a filtration without giving explicitely the process with independent coordinates. The first such criterion was given by Vershik~\cite{vershik1994}, but here we will be interested in another one, called \emph{I-cosiness}, introduced by Émery and Schachermayer~\cite{Emery-Scha} and strongly inspired by ideas of Tsirelson~\cite{Tsirelson} and Smorodinsky~\cite{Smoro}.

We first have to define the concept of \emph{real-time coupling} for a filtration.

\begin{definition}[Real-time coupling]
\label{def:rtc}
 Let $\F=(\F_n)_{n\le 0}$ a filtration on a probability space $(\Omega,\PP)$. We call \emph{real-time coupling of $\F$} a pair $(\F',\F'')$ of filtrations, both defined on the same probability space (but possibly different from $(\Omega,\PP)$), such that
 \begin{itemize}
  \item $\F'\sim\F$,
  \item $\F''\sim\F$,
  \item $\F'$ and $\F''$ are jointly immersed.
 \end{itemize}
Such a real-time coupling of $\F$ is said to be \emph{independent in the distant past} if there exists some integer $n_0\le0$ such that $\F'_{n_0}$ and $\F''_{n_0}$ are independent. In this case, we also say \emph{$n_0$-independent} if we want to highlight $n_0$.
\end{definition}

In practice, if we want to construct a real-time coupling of a filtration generated by a process $X=(X_n)_{n\le0}$, we have to build on the same probability space two copies $X'=(X'_n)_{n\le0}$ and $X''=(X''_n)_{n\le0}$ of $X$. The joint immersion of the filtrations they generate amonts to the following conditions for each $n\le -1$: 
\begin{align*}
 &\L\bigl(X'_{n+1} | \F'_n \vee \F''_n\bigr) = \L\bigl(X'_{n+1} | \F'_n\bigr),\\
 \text{and}\quad &\L\bigl(X''_{n+1} | \F'_n \vee \F''_n\bigr) = \L\bigl(X''_{n+1} | \F''_n\bigr).
\end{align*}
So, we can construct such a coupling step by step: assuming that we already have defined $X'_m$ and $X''_m$ for each $m\le n$, the construction continues at time $n+1$ with the realization, conditionally to $\F'_n\vee\F''_n$, of a coupling of the two conditional laws  $\L\bigl(X'_{n+1} | \F'_n\bigr)$ and $\L\bigl(X''_{n+1} | \F''_n\bigr)$. This explains the denomination \emph{real-time coupling}.

\begin{definition}[I-cosiness]
 \label{def:I-cosiness}
 Let $\F$ be a filtration, and let $Y$ be an $\F_0$-measurable random variable taking values in some Polish metric space $(E,d)$. 
 Then $Y$ is said to be \emph{I-cosy (with respect to $\F$)} if, for each real number $\delta>0$, we can find a real-time coupling $(\F',\F'')$ of $\F$ which is independent in the distant past, and such that the two copies $Y'$ and $Y''$ of $Y$ in $\F'$ and $\F''$ respectively satisfy
 \begin{equation}
  \label{eq:close_copies}
  d(Y',Y'')<\delta \text{ with probability } > 1-\delta.
 \end{equation}
 The filtration itself is said to be \emph{I-cosy} if each $\F_0$-measurable random variable $Y$ is I-cosy with respect to $\F$.
\end{definition}

The variable $Y$ whose copies we want to be close together is called a \emph{target}. Classical arguments of measure theory allow to considerably reduce the number of targets to test when we want to establish I-cosiness for a given filtration (see~\cite{laurent2011}, or~\cite[Section~1.2.4]{LanthierPhD}). In particular we will use the following proposition.

\begin{prop}
 \label{prop:targets}
 Assume that $\F_0$ is generated by a countable family $(Z_i)_{i\in I}$ of random variables taking values in finite sets. Assume also that we have written the countable set $I$ as an increasing union of finite sets $I_k$, $k\ge0$. For each integer $k\ge0$, denote by $Y_k$ the random variable $(Z_i)_{i\in I_k}$. If $Y_k$ is I-cosy for each $k$, then the filtration is I-cosy.
\end{prop}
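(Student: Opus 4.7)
The plan is to reduce the I-cosiness test for an arbitrary $\F_0$-measurable target to the I-cosiness of the generating targets $Y_k$. Fix an $\F_0$-measurable random variable $Y$ with values in a Polish metric space $(E,d)$ and a tolerance $\delta>0$. The argument proceeds in three stages: approximation of $Y$ by a function of some $Y_k$; use of I-cosiness of that $Y_k$ to produce a coupling that identifies the two copies of $Y_k$; and a triangle-inequality conclusion transferring closeness from $Y_k$ back to $Y$.

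For the approximation, I would partition $E$ into countably many Borel sets of diameter less than $\delta/3$, pick a representative in each, and retain only finitely many of them, obtaining a finite-valued, $E$-valued, $\F_0$-measurable random variable $\tilde Y$ with $\PP\bigl(d(Y,\tilde Y)\ge\delta/3\bigr)<\delta/6$. Since $I=\bigcup_k I_k$ is increasing, $\tribu(Y_k)\uparrow\F_0$, so each of the finitely many level sets of $\tilde Y$ can be approximated in symmetric difference by a $\tribu(Y_k)$-event for $k$ large enough. This yields an index $k$ and a map $\psi$ on the finite range of $Y_k$ such that $Y^*:=\psi(Y_k)$ takes values in $E$ and $\PP\bigl(d(Y,Y^*)\ge\delta/3\bigr)<\delta/3$.

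Since $Y_k$ takes finitely many values, its I-cosiness (applied with a small enough tolerance) gives a real-time coupling $(\F',\F'')$ of $\F$, independent in the distant past, with $\PP(Y_k'=Y_k'')>1-\delta/3$. The same coupling provides canonical copies $Y'\in\F'$, $Y''\in\F''$ of $Y$, together with their $\psi$-images $(Y^*)'=\psi(Y_k')$ and $(Y^*)''=\psi(Y_k'')$; since $\F\sim\F'$ and $\F\sim\F''$ preserve the joint law of the pair $(Y,Y^*)$, we have $\PP\bigl(d(Y',(Y^*)')\ge\delta/3\bigr)<\delta/3$ and the analogous bound on the $''$ side. On the intersection of $\{d(Y',(Y^*)')<\delta/3\}$, $\{Y_k'=Y_k''\}$ and $\{d((Y^*)'',Y'')<\delta/3\}$, whose probability exceeds $1-\delta$, we have $(Y^*)'=(Y^*)''$ and the triangle inequality yields $d(Y',Y'')<\delta$. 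This proves $Y$ is I-cosy, hence so is $\F$. The main technical hurdle is the first stage, namely producing a finite-valued, $E$-valued random variable that both approximates $Y$ in the metric of $E$ and is (up to a small-probability discrepancy) a function of $Y_k$; once this is in place, the rest is routine bookkeeping with a single coupling used simultaneously for $Y$ and for $Y^*$.
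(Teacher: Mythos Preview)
The paper does not actually supply a proof of this proposition: it states the result and refers the reader to~\cite{laurent2011} and~\cite[Section~1.2.4]{LanthierPhD} for the ``classical arguments of measure theory'' involved. So there is no in-text proof to compare against.

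Your sketch is the standard argument one finds in those references and is essentially correct. The three-stage structure (discretize $Y$ in $E$, approximate the discretization by a function of some $Y_k$ using $\tribu(Y_k)\uparrow\F_0$, then run a single real-time coupling and use the triangle inequality) is exactly the right plan, and your bookkeeping with the $\delta/3$ budget and the fact that the isomorphisms $\F\sim\F'$, $\F\sim\F''$ transport the joint law of $(Y,Y^*)$ is the correct way to transfer the approximation bounds to the copies. One point worth making explicit in a full write-up is how you pass from the $\F_0$-measurable finite-valued $\tilde Y$ to a genuine function $\psi(Y_k)$ with $\PP(\tilde Y\neq\psi(Y_k))$ small: approximating each level set of $\tilde Y$ by a $\tribu(Y_k)$-set gives sets that need not partition $\Omega$, so one either disjointifies them (paying a controlled extra error) or, more cleanly, defines $\psi(v)$ as the value of $\tilde Y$ maximizing $\PP(\tilde Y=\cdot\mid Y_k=v)$ and uses the martingale convergence $\PP(\tilde Y=e_j\mid Y_k)\to\ind{\tilde Y=e_j}$. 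Either route closes the gap you flag as the ``main technical hurdle''.
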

 
Note that, in the context of the above proposition, since the set of all possible values of $Y_k$ is finite, we can endow it with the discrete metric and replace condition~\eqref{eq:close_copies} with 
\[
  Y'_k=Y''_k \text{ with probability } > 1-\delta.
\]

The following result is proved in~\cite{Emery-Scha} using ideas from~\cite{vershik1994} (see also~\cite{laurent2011}).

\begin{theo}
 \label{thm:criterion}
 A filtration $\F$ is standard if and only if it is I-cosy.
\end{theo}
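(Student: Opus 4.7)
The plan is to prove the two implications separately, treating the easier direction (standard $\Rightarrow$ I-cosy) first, and then deducing the converse by going through Vershik's original standardness criterion as an intermediate step.

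For \emph{standard $\Rightarrow$ I-cosy}, I would first establish I-cosiness for product-type filtrations. Given such an $\F$ generated by independent $(X_n)_{n\le 0}$, an $\F_0$-measurable target $Y$, and $\delta>0$, backwards martingale convergence provides an integer $n_0\le 0$ and a function $\phi$ so that $\phi(X_{n_0+1},\ldots,X_0)$ is within $\delta/2$ of $Y$ with probability $> 1-\delta/2$. On an auxiliary space I would build two copies of $(X_n)_{n\le 0}$ that are independent for $n\le n_0$ but identical (and independent of everything else) for $n_0<n\le 0$. The identity $X'_n=X''_n$ for $n>n_0$ makes the two generated filtrations $\F',\F''$ jointly immersed and $n_0$-independent, while the corresponding copies $Y',Y''$ both coincide with $\phi(X_{n_0+1},\ldots,X_0)$, so $\PP(d(Y',Y'')<\delta)>1-\delta$. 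Next I would show that I-cosiness is preserved under immersion: if $\F\immersed\G$ and $\G$ is I-cosy, any $\F_0$-measurable target is also $\G_0$-measurable, and a real-time coupling of $\G$ bringing its copies close restricts—via the sub-filtrations corresponding to $\F$—to a real-time coupling of $\F$ with the same properties (restriction preserves both joint immersion and independence in the distant past). Since I-cosiness is manifestly invariant under isomorphism, and ``standard'' means immersible in a product-type filtration, this finishes the first direction.

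For the converse, I would invoke Vershik's criterion, known to characterize standardness for filtrations with essentially separable $\F_0$. Assume $\F$ is I-cosy; by Proposition~\ref{prop:targets}, it suffices to handle finitely-valued targets $Y$. Given $\delta>0$, pick an $n_0$-independent real-time coupling $(\F',\F'')$ with $\PP(Y'=Y'')>1-\delta$. The key step is to reinterpret the coupling as a near-automorphism of the tree of conditional laws of $Y$ given $\F_{n_0}$: since $\F'_{n_0}\perp\F''_{n_0}$ with both marginals isomorphic to $\F_{n_0}$, the joint distribution of $(Y',Y'')$ conditionally on $\F'_{n_0}$ induces a measure-preserving bijection between the atoms of $\F_{n_0}$ that agrees with the law of $Y$ up to an error governed by $\delta$; joint immersion propagates this approximation down to the finer conditional structure at times $n_0<n\le 0$. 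This is precisely the ingredient needed to bound Vershik's criterion quantity at level $n_0$ by a function of $\delta$ tending to $0$; letting $\delta\to 0$ (and $n_0\to -\infty$ accordingly) yields standardness.

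The main obstacle is the second direction, specifically the bridge between an abstract coupling (I-cosiness) and a structural approximation (Vershik's criterion). The subtle point is extracting from a single real-time coupling a ``measure-preserving rearrangement'' of the conditional-law tree of $Y$ given $\F_{n_0}$ that is compatible with all further conditioning steps between $n_0$ and $0$; it is precisely here that the real-time nature of the coupling (as opposed to an arbitrary coupling) is essential, and this is where the combinatorics of \cite{vershik1994} and \cite{Emery-Scha} enter.
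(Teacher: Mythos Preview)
The paper does not supply its own proof of this theorem: the sentence immediately preceding the statement reads ``The following result is proved in~\cite{Emery-Scha} using ideas from~\cite{vershik1994} (see also~\cite{laurent2011}).'' So there is no in-paper argument to compare your attempt against; the theorem is quoted from the literature.

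That said, your plan is aligned with what those references do, and with what the paper itself later carries out in the dynamical setting. For the implication \emph{standard $\Rightarrow$ I-cosy}, your two-step reduction (product type $\Rightarrow$ I-cosy, then stability of I-cosiness under immersion and isomorphism) is exactly the route the authors take in Lemmas~\ref{lemma:dpt_implies_dic} and~\ref{lemma:dim_dic} for factor filtrations; the static case is identical. One minor sloppiness: in your product-type argument you write that ``the corresponding copies $Y',Y''$ both coincide with $\phi(X_{n_0+1},\ldots,X_0)$''. That is not literally true; what coincide are the copies of $\phi(X_{n_0+1},\ldots,X_0)$, and each of these is $\delta/2$-close to the corresponding copy of $Y$ with probability $>1-\delta/2$, whence $d(Y',Y'')<\delta$ with probability $>1-\delta$ by a union bound. (Alternatively, as the paper does, one may restrict to targets that are already $\tribu(X_{n_0+1},\ldots,X_0)$-measurable and get $Y'=Y''$ on the nose.)

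For the converse, you correctly identify that the real work is the passage from a real-time coupling to Vershik's tree/iterated-barycentre criterion, and that this is where~\cite{Emery-Scha,vershik1994} are needed. Your description of this step is, appropriately, a sketch rather than a proof; the phrase ``measure-preserving bijection between the atoms of $\F_{n_0}$'' is heuristic at best (for a diffuse $\F_{n_0}$ there are no atoms), and the genuine argument operates on the level of iterated conditional laws of $Y$ and their transport distances. Since the paper itself defers entirely to the cited references here, your proposal is as complete as the paper's treatment.
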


\subsection{Standardness of the filtration $\fte$}
\label{sec:fte_standard}
We will apply the I-cosiness criterion to prove the standardness of the filtration $\fte$ defined in Section~\ref{sec:ftm}, from which we will also be able to derive that this filtration is of product type. We use now the notations introduced in this section: $X=(X_n)_{n\in\ZZ}$ is a stationary Markov process with transitions given by the Markov kernel $\te$, for each $n$, $X_n$ takes its values in $\AA^\ZZ$ and follows the law $\mu$. The filtration $\fte$ is generated by the negative-time part of this process. 

\begin{theo}
 \label{thm:fte_standard}
 For each $0 < \eps < \frac{[\AA|-1}{\cA }$, the filtration $\fte$ is I-cosy, hence it is standard.
\end{theo}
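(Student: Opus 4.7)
By Theorem~\ref{thm:criterion}, it suffices to establish that $\fte$ is I-cosy. The sigma-algebra $\fte[0]$ is generated by the countable family $\bigl(X_0(i)\bigr)_{i\in\ZZ}$ of $\AA$-valued random variables, so applying Proposition~\ref{prop:targets} with the exhausting sequence $I_K := [-K, K] \cap \ZZ$ reduces the problem to showing that each finite target $Y_K := X_0[-K, K]$ is I-cosy.

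Fix $K \ge 0$ and $\delta > 0$. I would build an $(-N)$-independent real-time coupling of $\fte$ with $N$ large enough (to be determined). On an auxiliary probability space, let $(X'_n)_{n\in\ZZ}$ and $(X''_n)_{n\in\ZZ}$ be two stationary Markov chains with kernel $\te$ whose restrictions to $\{m \le -N\}$ are independent. For $n > -N$, writing $D_{n-1} := X'_{n-1} - X''_{n-1}$, I would couple the errors site by site so as to make $\xi''_n(i) - \xi'_n(i) = -\tau D_{n-1}(i)$ with maximum probability: using the reformulation~\eqref{eq:law_of_xi} and an independent family $\bigl(B_n(i), U_n(i)\bigr)$ with $B_n(i) \sim \mathrm{Bernoulli}(\epst)$ and $U_n(i)$ uniform on $\AA$, set
\[
  \xi'_n(i) := B_n(i)\, U_n(i), \qquad \xi''_n(i) := B_n(i)\bigl(U_n(i) + \tau D_{n-1}(i)\bigr),
\]
then $X'_n := \tau X'_{n-1} + \xi'_n$ and $X''_n := \tau X''_{n-1} + \xi''_n$. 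One checks easily that each of $\xi'_n, \xi''_n$ has the prescribed error law; and since the update at time $n$ uses only $(X'_{n-1}, X''_{n-1})$ together with fresh randomness, the two filtrations are jointly immersed, and by construction the coupling is $(-N)$-independent.

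A direct computation yields $D_n(i) = (1 - B_n(i))\, \tau D_{n-1}(i)$, so the discrepancy at site $i$ is annihilated as soon as $B_n(i) = 1$ and otherwise propagates by $\tau$. Iterating gives
\[
  D_0(j) = \sum_{k=0}^N N_k(j) \cdot D_{-N}(j+k),
\]
where $N_k(j)$ is the random number of \emph{alive} backward paths $(j = j_0, j_1, \ldots, j_N = j+k)$, with $j_{s+1} \in \{j_s, j_s+1\}$ and $B_{-s}(j_s) = 0$ for every $s$, and $N_k(j)$ acts on $D_{-N}(j+k) \in \AA$ by iterated addition. Since the family $\bigl(N_k(j)\bigr)_k$ is independent of the uniform $D_{-N}$, conditioning shows that $D_0(j) = 0$ whenever each integer $N_k(j)$ annihilates $D_{-N}(j+k)$, so the problem reduces to an estimate on the space-time field $\bigl(B_n(i)\bigr)$ alone.

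The main obstacle is to show $\PP\bigl(D_0(j) \neq 0\bigr) \to 0$ as $N \to \infty$, uniformly in $j \in [-K, K]$. A naive union bound over the $2^N$ possible paths (each alive with probability $(1-\epst)^N$) gives only the rate $(2(1-\epst))^N$, which works when $\epst > 1/2$ but is useless in the subcritical regime $\epst \le 1/2$, where the expected number of alive paths diverges. To cover the full range $\epst \in (0,1)$, we must exploit the cancellations modulo $\AA$ among the many alive paths sharing a common endpoint $j+k$, via a careful percolation-type analysis of the independent field $\bigl(B_n(i)\bigr)$. Once a uniform estimate $\PP(D_0(j) \neq 0) \le \eta_N$ with $\eta_N \to 0$ is established, a union bound over the $2K+1$ sites of $[-K,K]$ followed by a suitable choice of $N$ makes $\PP(Y'_K \neq Y''_K) < \delta$, so $Y_K$ is I-cosy; Proposition~\ref{prop:targets} and Theorem~\ref{thm:criterion} then give the conclusion.
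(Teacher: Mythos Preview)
There are two genuine gaps, one minor and one fatal.

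\textbf{Minor.} The sigma-algebra $\fte[0]$ is \emph{not} generated by $\bigl(X_0(i)\bigr)_{i\in\ZZ}$ alone: for the probabilistic automaton, $X_{-1}$ is not $\tribu(X_0)$-measurable (unlike the deterministic case). So Proposition~\ref{prop:targets} must be applied with the full family $\bigl(X_n(i):n\le0,\ i\in\ZZ\bigr)$, and the targets $Y_k$ must involve a window in time as well as in space. The paper handles this by first treating $X_0[-k,k]$ and then using stationarity in $n$ together with the deterministic propagation of $\tau$ to reach $\bigl(X_n[-k,k]:-k\le n\le0\bigr)$.

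\textbf{Fatal.} Your coupling is shift-equivariant: the update rule at site $(n,i)$ depends only on the local value $\tau D_{n-1}(i)$ and fresh randomness. For $\AA=\ZZ/2\ZZ$ one has $\epst=2\eps$, and your difference process obeys exactly the rule ``$D_n(i)=0$ if $\tau D_{n-1}(i)=0$; otherwise $D_n(i)=0$ with probability $2\eps$''. This is precisely the PCA discussed later in the paper (Section on the factor filtration $(\fte,\sigma)$), for which the Bramson--Neuhauser survival theorem implies that, starting from a uniform configuration, the density of $1$'s does \emph{not} vanish when $\eps$ is small. Hence $\PP\bigl(D_0(j)\neq0\bigr)\not\to0$, and your hoped-for estimate $\eta_N\to0$ is simply false in the regime $\epst$ small. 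The ``cancellations modulo $\AA$'' you invoke cannot rescue this: for $\AA=\ZZ/2\ZZ$ they are already fully accounted for in the Bramson--Neuhauser dynamics.

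The paper circumvents this by abandoning shift-equivariance, which is allowed in the \emph{static} I-cosiness criterion. Its coupling depends on the target window $[-k,k]$: at each time $n$ it locates the leftmost nonzero coordinate $j_{n-1}$ of the ``projected'' difference $\tau^{|n-1|}(Z_{n-1})[-k,k]$, and attempts a correction only on the single site lying on the diagonal $\D(j_{n-1})$, using strategy $S_{\zA}$ everywhere else. Because each diagonal influences only target coordinates to its right, the index $j_n$ is nondecreasing and increases by at least one whenever the attempted correction succeeds (probability $\epst$). Thus $Z_0[-k,k]=0$ as soon as a sum of $2k+1$ independent geometric$(\epst)$ variables is at most $|n_0|$, which holds with probability $\to1$ for every $\epst\in(0,1)$. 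The whole point is that this targeted, non-equivariant construction works uniformly in $\eps$, whereas any equivariant coupling of the type you propose is obstructed by Bramson--Neuhauser for small $\eps$.
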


\begin{proof}
  Since $\fte[0]$ is generated by the countable family of random variables $\bigl(X_n(i):n\leq0,i\in\ZZ \bigr)$, it is sufficient by Proposition~\ref{prop:targets} to check that, for each integer $k\ge0$, the random variable $Y_k:=\bigl(X_n[-k,k]:-k\le n\leq0 \bigr)$ is I-cosy. In fact, we will see at the end of the proof that it is enough by stationarity of the process $X$ to consider simpler targets, which are the random variables of the form $X_0[-k,k]$, $(k\ge0)$.
  
  So we fix an integer $k\ge0$, we consider the target $X_0[-k,k]$, and we fix a real number $\delta>0$. To check the I-cosiness of $X_0[-k,k]$, we have to construct on some probability space $(\Omega,\PP)$ two copies $X'$ and $X''$ of the process $X$, such that 
  \begin{itemize}
  \item for some $n_0\le 0$,
  \begin{equation}
   \label{eq:n_0-independence}
   \F'_{n_0}=\tribu(X'_m:m\le n_0)\text{ and }\F''_{n_0}=\tribu(X''_m:m\le n_0)\text{ are independent, }
  \end{equation}
(to ensure that the filtrations $\F'$ and $\F''$ generated by the negative parts of these process are independent in the distant past),
\item for each $n_0+1\le n\le 0$,
\begin{equation}
 \label{eq:joint_immersion}
 \L(X'_{n} | \F'_{n-1}\vee\F''_{n-1}) = \te(X'_{n-1}, \cdot) \text{ and }\L(X''_{n} | \F'_{n-1}\vee\F''_{n-1}) = \te(X''_{n-1}, \cdot)
\end{equation}
(to get the joint immersion of the filtrations $\F'$ and $\F''$), and
\item the copies $\bigl(X'_0[-k,k]\bigr)$ and $\bigl(X''_0[-k,k]\bigr)$ of the target random variable satisfy
\begin{equation}
 \label{eq:Tk}
 \PP\bigl(X'_0[-k,k] = X''_0[-k,k]\bigr) > 1-\delta.
\end{equation}
\end{itemize}
Here is how we will proceed. We fix some $n_0\le 0$ and consider a probability space $(\Omega,\PP)$ in which we have two independent copies $\bigl(X'_n:n\le n_0\bigr)$ and $\bigl(X''_n:n\le n_0\bigr)$ of $\bigl(X_n:n\le n_0\bigr)$, and an independent family of i.i.d random variables $\bigl(U_n(i):n\ge n_0+1,i\in\ZZ\bigr)$ which are all uniformly distributed on the interval $[0,1]$. These random variables will be used to construct inductively the error processes $\bigl(\xi'_n(i):n\ge n_0+1,i\in\ZZ\bigr)$ and $\bigl(\xi''_n(i):n\ge n_0+1,i\in\ZZ\bigr)$ and the rest of the Markov processes $\bigl(X'_n:n\ge n_0+1\bigr)$ and $\bigl(X''_n:n\ge n_0+1\bigr)$ through the formula
\[
 X'_{n}:=\tau X'_{n-1} + \xi'_{n},\text{ and } X''_{n}:=\tau X''_{n-1} + \xi''_{n}. 
\]
We will use the auxilliary process $Z:=X'-X''$, and we note that for $n\ge n_0+1$ we have 
\begin{equation}
 \label{eq:evolution_Zn}
 Z_n=\tau Z_{n-1}+\xi'_n-\xi''_n.
\end{equation}
Rewriting~\eqref{eq:Tk} with this notation, we want to achieve the coupling in such a way that, provided $|n_0|$ is large enough,
\begin{equation}
 \label{eq:goal}
 \PP\bigl(Z_0[-k,k] = (0_\AA,\ldots,0_\AA)\bigr) > 1-\delta. 
\end{equation}
We explain now how we construct inductively the error processes. Assuming that, for some $n\ge n_0+1$, we already have defined the processes $X'$ and $X''$ up to time $n-1$, we choose for each $i\in\ZZ$ two random maps $g'_{n,i}$ and $g''_{n_i}$ from $[0,1]$ to $\AA$ and set $\xi'_{n}(i):=g'_{n,i}\bigl(U_n(i)\bigr)$ and $\xi''_{n}(i):=g''_{n,i}\bigl(U_n(i)\bigr)$. These maps are random in the sense that they depend on the realizations of the processes up to time $n-1$. However they have to comply with the required law for the errors, which we ensure by choosing them among the family of maps $(g_a)_{a\in\AA}$ defined as follows. Recalling the definition~\eqref{eq:epst} of $\epst$ and the formulation~\eqref{eq:law_of_xi} of the law of the error, we cut the interval $[0,1]$ into $\cA +1$ subintervals: the first $\cA$ of them are of the same length $\epst/\cA$, we denote them by $J_b$, $b\in\AA$ (they correspond to the uniform measure part of the error), and the last one is  $[\epst,1]$. Then for each $a\in\AA$, we define the map $g_a:[0,1]\to\AA$ by setting 
\[ g_a(u):= \begin{cases}
             a+b &\text{ if }u\in J_b\ (b\in\AA),\\
             \zA &\text{ if }u\in [\eps,1].
            \end{cases}
\]
By choosing $g'_{n,i}$ and $g''_{n_i}$ in the set $\{g_a: a\in\AA\}$, we get the correct conditionnal law of the errors $\xi'_n$ and $\xi''_n$ knowing $\F'_{n-1}\vee\F''_{n-1}$, and we have~\eqref{eq:joint_immersion}. It remains to explain how we make this choice for our purposes.

Given a site $(n,i)$, for each $a\in\AA$ we define the strategy $S_a$ as the choice $\bigl(g'_{n,i},g''_{n,i}\bigr):=(g_a,g_{\zA})$ (see Figure~\ref{fig:Sa}). 
In this way, when we apply the strategy  $S_a$, we obtain 
\[
 \xi'_n(i)-\xi''_n(i)=\begin{cases}
                        a &\text { if }U_n(i)\in[0,\epst),\\
                        \zA &\text{ otherwise.}
                       \end{cases}
\]
\begin{figure}
\begin{center}
\includegraphics[width=8cm]{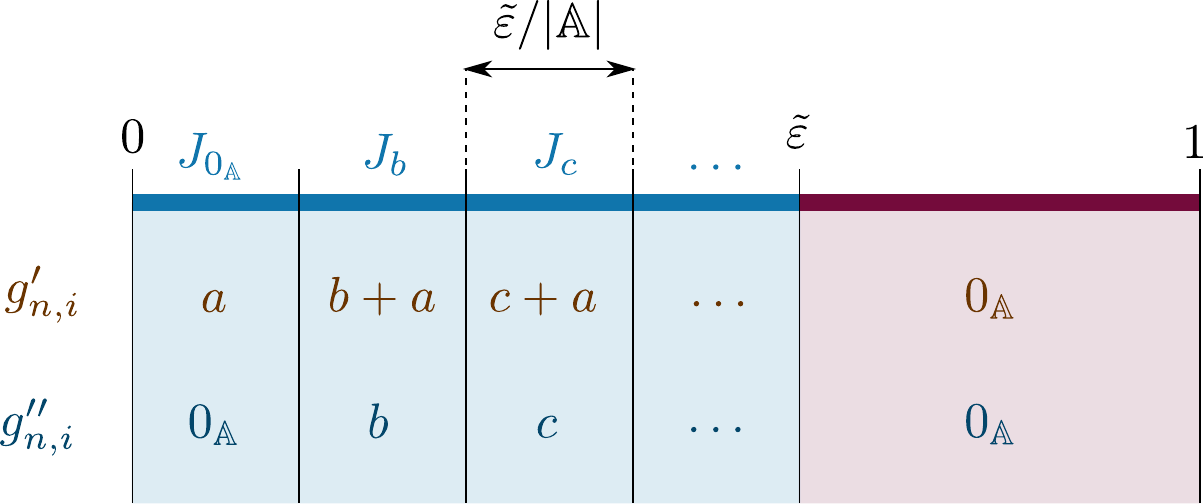}		                                 
\end{center}
\caption{Choice of $g'_{n,i}$ and $g''_{n,i}$ when we apply the strategy $S_a$}
	\label{fig:Sa}
\end{figure} 

The special case $a=\zA$ gives $\xi''_n(i)-\xi'_n(i)=\zA$ with probability one, and thus the choice of the strategy $S_\zA$ on a given site $(n,i)$ ensures that, locally, the evolution of the $Z$ process is given by the  the determinist action of the cellular automaton $\tau$ (remember~\eqref{eq:evolution_Zn}).

The set of sites $(n,i)$ for which we have to define $g'_{n,i}$ and $g''_{n,i}$ is partitionned into ``diagonals'' ${\D} (j)$, $j\in\ZZ$, where 
\[ {\D} (j) := \{ (n,i) / n\geq n_0+1,\ i=j-n\}. \]
(See Figure~\ref{fig:CTR-tau_epsilon}.) We observe that, for $i<j\in\ZZ$, any error added on some site of 
 ${\D} (j)$ has no influence on $Z_0(i)$. 

\begin{figure}
 \begin{center}
  \includegraphics[width=135mm]{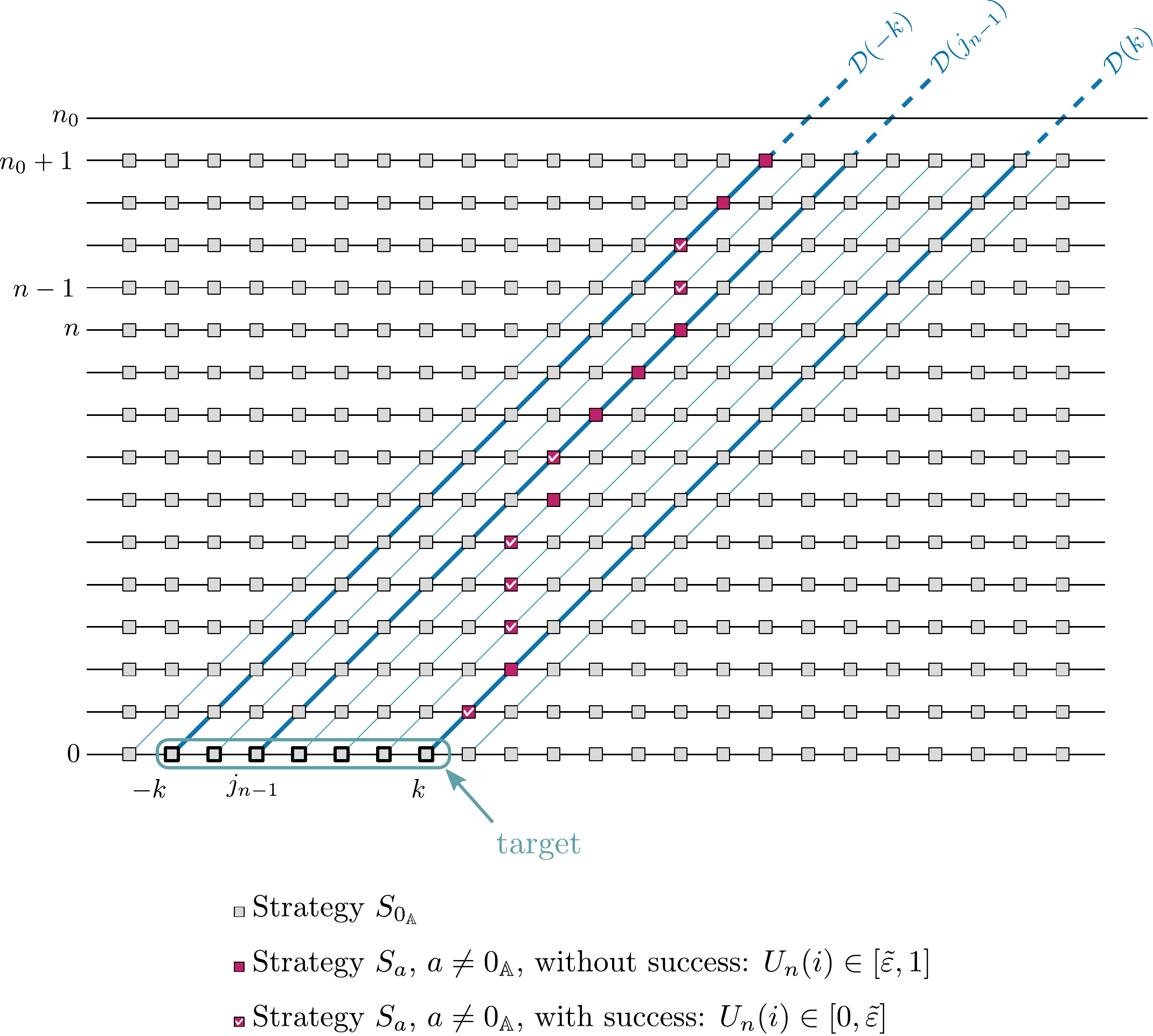}
 \end{center}
\caption{The choice of the local strategies to construct the real-time coupling}
\label{fig:CTR-tau_epsilon}
\end{figure} 

On all the diagonals ${\D} (j)$, $j<-k$ or $j>k$, we systematically choose the strategy $S_\zA$, so that $Z$ follows the deterministic evolution of the cellular automaton $\tau$ in these areas. It remains to explain the choice of the strategies on the sites of the diagonals  ${\D} (j)$, $-k\leq j\leq k$. 

Consider an integer $n$, $n_0<n\leq 0$, and assume that we know the $Z$ process up to time $n-1$. We compute $\tau^{|n-1|}(Z_{n-1})$, and according to what we get we make the following choices:
\begin{itemize}
 \item If $\tau^{|n-1|}(Z_{n-1})[-k,k]=(\zA,\ldots,\zA)$, we can easily ensure that $Z_0[-k,k]=(\zA,\ldots,\zA)$ by choosing the strategy  $S_{\zA}$ for all the following sites. We set in this case $j_{n-1}:=k+1$.
 \item Otherwise, we define $j_{n-1}$ as the smallest integer $j\in\{-k,\ldots,k\}$ such that 
 \[ \tau^{|n-1|}(Z_{n-1})(j)\neq \zA, \] 
 and we apply the strategy $S_{\zA}$ at all sites $(n,i)$ except when $(n,i)$ falls on $\D_{j_{n-1}}$ (that is, when $i=j_{n-1}-n$), where we apply $S_{-a_{n-1}}$ with $a_{n-1}:= \tau^{|n-1|}(Z_{n-1})(j_{n-1})$. 
\end{itemize}
With this method, if $j_{n-1}\in\{-k,\ldots,k\}$, the only $i$ on line $n$ for which $Z_{n,i}$ is not determined is  $i=j_{n-1}-n$, and the value of $Z_{n,j_{n-1}-n}$ now depends only on the random variable $U_n(j_{n-1}-n)$:
\begin{itemize}
 \item If $U_n(j_{n-1}-n) \in [0,\epst)$, the difference $\xi'_n(j_{n-1}-n)-\xi''_n(j_{n-1}-n)$ takes the value $-a_{n-1}$. This exactly compensates the determinist action of $\tau$ for the target site $(0,j_{n-1})$, which ensures that, at the next step, we get $\tau^{|n|}(Z_{n})(j_{n-1})=\zA$, and thus $j_n\ge j_{n-1}+1$. 
 \item Otherwise, we have $\xi'_n(j_{n-1}-n)-\xi''_n(j_{n-1}-n)=0$, so that $\tau^{|n|}(Z_{n})(j_{n-1})=a_{n-1}$ and $j_n=j_{n-1}$. 
\end{itemize}

We construct inductively the real-time coupling by applying the above method for $n_0+1\le n\le 0$. How we build the coupling for times $n\geq0$ is irrelevant, we can for example choose always the strategy $S_{\zA}$ for these positive times. 
Now we have to prove that \eqref{eq:goal} is achieved for $|n_0|$ large enough.  

We define inductively random times $T(-k),\ldots,T(k)$. We start with $T(-k)$, which only depends on the random variables $U_n(i)$, $(n,i)\in{\D} (-k)$ :
\[ T(-k):=\min\bigl\{n \geq n_0+1  :\, U_n(-k-n) \in [0,\epst) \bigr\}. \]
(With the usual convention that $\min \emptyset=+\infty$, but observe that $T(-k)<\infty$ with probability 1.) The random variable $T(-k)-n_0$ then follows a geometric law of parameter $\epst$. And by construction of the coupling, for each $n_0+1\le n\le 0$ we have 
\[ T(-k)\leq n \Longrightarrow j_n > -k. \]
Then, if we have already defined $T(j)$ for some $-k\le j\le k-1$, we set 
\[ T(j+1):=\min\bigl\{n\geq T(j)+1 :\, U_n(j+1-n) \in [0,\epst) \bigr\}. \]

Note that $T(j+1)$ only depends on $T(j)$ and on the random variables $U_n(i)$, $(n,i)\in{\D} (j+1)$. And by construction of the coupling, we get by induction that, for each $j\in\{-k,\ldots,k\}$ and each $n\leq0$, 
$$ T(j)\leq n \Longrightarrow j_n > j. $$
In particular,
\begin{equation}
 \label{eq:cible_atteinte}
 T(k)\leq 0 \Longrightarrow j_0 > k \Longrightarrow Z_0[-k,k]=(\zA,\ldots,\zA).
\end{equation}

For $j\in\{-k,\ldots,k-1\}$, the difference $T(j+1) -T(j)$ is distributed according to a geometric law of parameter  $\epst$, and is independent of $T(-k),...,T(j)$. Hence $T(k)-n_0$ is distributed as the sum of $2k+1$ independent geometric random variables of parameter $\epst$. Therefore, as $\epst$ and $k$ are fixed, we get
\begin{equation}
 \label{eq:somme_geometrique}
 \PP\bigl(T(k)\leq 0\bigr) \tend{n_0}{-\infty} 1.
\end{equation}
By~\eqref{eq:somme_geometrique} and~\eqref{eq:cible_atteinte}, we get that~\eqref{eq:goal} is satisfied for $|n_0|$ large enough. 

\medskip

This proves that for each $k\ge0$, the target $X_0[-k,k]$ is I-cosy. Now, the same argument works also to prove the I-cosiness of $X_{-k}[-2k,2k]$ for each $k\ge0$.
This gives us a real-time coupling up to time $-k$ for which 
\[ \PP\bigl(Z_{-k}[-2k,2k]=(\zA,\ldots,\zA)\bigr)>1-\delta. \] 
Note that 
\[Z_{-k}[-2k,2k]=(\zA,\ldots,\zA) \Longrightarrow \forall 0\le \ell\le k,\ \tau^\ell Z_k[-k,k]=(\zA,\ldots,\zA)\]
So we can continue this coupling by always using the strategy $S_{\zA}$ on lines $-k+1,\ldots,0$ to prove the I-cosiness of $Y_k=\bigl(X_n[-k,k]:-k\le n\leq0 \bigr)$.
\end{proof}

\medskip

We can furthermore observe that the filtration $\fte$ is \emph{homogeneous}: for each $n\le0$, $\fte[n]=\fte[n-1]\vee\tribu(\xi_n)$ where 
\begin{itemize}
 \item $\xi_n$ is independent of $\fte[n-1]$,
 \item the law of $\xi_n$ is diffuse.
\end{itemize}
Then a direct application of Theorem~A in~\cite{laurent2011} yields the following corollary:

\begin{corollary}
 \label{cor:product_type}
 For each $0 < \eps < \frac{[\AA|-1}{\cA }$, the filtration $\fte$ is of product type.
\end{corollary}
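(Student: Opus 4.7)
The plan is to justify the three bullet points listed just before the corollary, since once homogeneity of $\fte$ in the required sense is established, the corollary is an immediate invocation of Theorem~A of~\cite{laurent2011} combined with the already proved standardness (Theorem~\ref{thm:fte_standard}).

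First I would reconstruct explicitly the innovation $\xi_n$ at time $n$. Recall that $(X_n)_{n\in\ZZ}$ is built as a stationary Markov chain with kernel $\te$; concretely, one may realise it on the canonical space $(\AA^\ZZ)^\ZZ$ by choosing an i.i.d.\ sequence $(\xi_n)_{n\in\ZZ}$ of copies of the error random variable described in~\eqref{eq:law_of_xi}, independent of $X_{-\infty}$, and setting
\[
 X_n := \tau X_{n-1} + \xi_n \qquad (n\le 0).
\]
By construction, $\xi_n$ is independent of $(X_m)_{m\le n-1}$, hence independent of $\fte[n-1]$, which gives the first bullet. Moreover, since $\xi_n = X_n - \tau X_{n-1}$, we have $\tribu(\xi_n)\subset\fte[n]$ and trivially $\fte[n-1]\subset\fte[n]$, so $\fte[n-1]\vee\tribu(\xi_n)\subset\fte[n]$; conversely $X_n=\tau X_{n-1}+\xi_n$ is $\fte[n-1]\vee\tribu(\xi_n)$-measurable, yielding the reverse inclusion and hence the identity $\fte[n]=\fte[n-1]\vee\tribu(\xi_n)$.

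Next I would verify that $\L(\xi_n)$ is diffuse on $\AA^\ZZ$. The law of $\xi_n$ is the infinite product given in~\eqref{eq:law_of_xi}: on each coordinate the marginal puts mass $1-\epst+\epst/\cA$ on $0_\AA$ and mass $\epst/\cA$ on every other element. Under our assumption~\eqref{eq:epsilon}, $0<\epst<1$, so each marginal has mass strictly less than $1$ at every point. For any fixed $y\in\AA^\ZZ$, the mass of the singleton $\{y\}$ is therefore an infinite product of numbers bounded away from $1$, which equals $0$; so the law is diffuse.

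Finally I would invoke Theorem~A of~\cite{laurent2011}, which asserts that a standard filtration $\F$ satisfying $\F_n=\F_{n-1}\vee\tribu(\eta_n)$ for each $n\le 0$, with $\eta_n$ independent of $\F_{n-1}$ and $\L(\eta_n)$ diffuse, is of product type. Applied to $\fte$ with $\eta_n=\xi_n$, this gives the corollary. The only substantive step is the diffuseness check above; the other ingredients are immediate from the Markov construction of $(X_n)$ together with Theorem~\ref{thm:fte_standard}.
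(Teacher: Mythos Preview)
Your proposal is correct and follows exactly the approach the paper sketches: verify homogeneity (the three bullet points) and invoke Theorem~A of~\cite{laurent2011} together with Theorem~\ref{thm:fte_standard}. The paper itself merely asserts the bullets without justification, so your argument is simply a fleshed-out version of theirs; in particular your diffuseness computation and the two-way inclusion for $\fte[n]=\fte[n-1]\vee\tribu(\xi_n)$ are the details the paper leaves implicit.
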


\subsection{Uniform cosiness and ergodicity of the Markov kernel}
        
   If we look carefully at the proof that the filtration $\fte$ is I-cosy, we see that the probability that the two copies of the target coincide in the real-time coupling converge to 1 as       
   $|n_0|\to\infty$, u\emph{niformly with respect to the states of the two copies at time $n_0$}. We get in fact a stronger property than I-cosiness, which we call \emph{uniform cosiness}, and which implies not only the I-cosiness of the filtration, but also the ergodicity of the Markov kernel.
   
   Recall that a Markov kernel on a compact metric space $(\X,d)$ is \emph{Feller} if $x\mapsto P(x,\cdot)$ is continuous for the weak* topology. Any Feller Markov kernel on a compact metric space admits an invariant probability distribution on $X$. The Markov kernel is said to be \emph{ergodic in the Markov sense} if there exists a probability distribution $\mu$ on $X$ such that, for each probability measure $\nu$ on $\X$, 
   \[ \nu Q^n  \tend[w*]{n}{\infty} \mu.\]
   In this case, $\mu$ is the unique ${Q}$-invariant probability measure.

    \begin{definition}[Uniform cosiness]
     Let  ${Q}$ be a Markov kernel on a compact metric space $({\X},d)$. We say that ${Q}$ is \emph{uniformly cosy} if, for each $\delta>0$, there exist $M=M(\delta)>0$ such that, whenever $n_0$ is a negative integer with $|n_0|\ge M$, for each $x',x''\in {\X}$ , there exists a probability measure $m_{x',x''}$ on $\bigl({\X}^{\{n_0,\ldots,0\}}\bigr)^2$ , depending measurably on $(x',x'')$ such that, denoting by $(X'_n)_{n_0\leq n\leq0}$ and $(X''_n)_{n_0\leq n\leq0}$ the canonical processes defined by the coordinates on $\bigl({\X}^{\{n_0,\ldots,0\}}\bigr)^2$, the following conditions hold:
     \begin{itemize}
      \item The starting points of the processes are given by $X'_{n_0}=x'$ and $X''_{n_0}=x''$, $m_{x',x''}$-almost surely.
      \item Under $m_{x',x''}$, for each $n_0\leq n\leq -1$, the conditionnal distribution 
      $$ \L\bigl( (X'_{n+1},X''_{n+1}) \,|\, \tribu(X'_{n_0},\ldots,X'_n,X''_{n_0},\ldots,X''_n)\bigl) $$
      is almost surely a coupling of the two probability distributions ${Q}(X'_n,\cdot)$ and ${Q}(X''_n,\cdot)$ (we thus realize from $n_0$ a real-time coupling of two Markov processes of transitions probabilities given by ${Q}$; one starting at $x'$ and the other at $x''$).
      \item We have
      \[ m_{x',x''}\big(d(X'_0,X''_0)>\delta\bigr)<\delta. \]
     \end{itemize}
    \end{definition}
    
    For example, the proof of Theorem~\ref{thm:fte_standard} shows that the Markov kernel defined by the probabilistic cellular automaton $\te$ is uniformly cosy.

%
%
%

\begin{theo}
	 Let ${Q}$ be a Feller Markov kernel on the compact metric space $({\X},d)$. If ${Q}$ is uniformly cosy, then ${Q}$ is ergodic in the Markov sense.  	
\end{theo}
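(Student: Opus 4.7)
The plan is to convert the uniform cosiness of $Q$ into a quantitative uniform weak-closeness statement for the kernels $Q^n(x',\cdot)$ and $Q^n(x'',\cdot)$ when $n$ is large, and then combine this with the existence of a $Q$-invariant probability (obtained from the Feller property and compactness of $\X$) to deduce ergodicity in the Markov sense.

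First I would reread the uniform cosiness property with $n_0 = -n$. The first two bullets in the definition guarantee that the marginals of $(X'_0,X''_0)$ under $m_{x',x''}$ are precisely $Q^n(x',\cdot)$ and $Q^n(x'',\cdot)$. The bound $m_{x',x''}\bigl(d(X'_0,X''_0)>\delta\bigr)<\delta$ is then a Strassen/Prokhorov-type estimate, saying in particular that the L\'evy--Prokhorov distance between $Q^n(x',\cdot)$ and $Q^n(x'',\cdot)$ is at most $\delta$. Concretely, for every bounded function $f : \X \to \RR$ with $\|f\|_\infty \le 1$ and Lipschitz constant at most $1$, splitting the expectation on $\{d(X'_0,X''_0)\le\delta\}$ and on its complement yields
\[
 |Q^n f(x') - Q^n f(x'')| \le \int |f(X'_0) - f(X''_0)|\, dm_{x',x''} \le \delta + 2\delta = 3\delta,
\]
uniformly over $(x',x'') \in \X^2$, as soon as $n \ge M(\delta)$.

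Second, compactness of $\X$ together with the Feller property yields, via the Krylov--Bogoliubov procedure (pick any $x_0 \in \X$, form the Ces\`aro averages $\tfrac{1}{N}\sum_{k=0}^{N-1}Q^k(x_0,\cdot)$, and extract a weak* limit point), the existence of at least one $Q$-invariant probability $\mu$ on $\X$.

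Finally, for any initial distribution $\nu$ and any bounded $1$-Lipschitz $f$ with $\|f\|_\infty \le 1$, invariance of $\mu$ gives $\mu(f) = \int Q^n f\, d\mu$, so
\[
 \nu Q^n(f) - \mu(f) = \int\!\!\int \bigl(Q^n f(x') - Q^n f(x'')\bigr)\, d\nu(x')\, d\mu(x''),
\]
which by the first step has absolute value at most $3\delta$ for every $n \ge M(\delta)$. Since $\delta > 0$ is arbitrary, $\nu Q^n(f) \to \mu(f)$; as bounded Lipschitz functions are convergence-determining on $\X$, this gives $\nu Q^n \tend{n}{\infty} \mu$ weakly. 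Applied with a second invariant measure $\mu'$ in place of $\nu$, the same estimate forces $\mu'(f) = \mu(f)$ for all such $f$, hence $\mu' = \mu$, and uniqueness is automatic. The only real obstacle is the first step, namely cleanly turning the coupling bound into the uniform Lipschitz estimate above with the right dependence on $\delta$ and the required uniformity in $(x',x'')$; the rest is a standard weak-convergence manipulation.
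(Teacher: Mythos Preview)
Your proof is correct and follows essentially the same route as the paper's: both arguments use the uniform-cosiness coupling to bound the difference of expectations by splitting on the event $\{d(X'_0,X''_0)>\delta\}$, then integrate against an invariant $\mu$ (whose existence comes from Feller plus compactness) to compare $\nu Q^n$ with $\mu$. The only cosmetic differences are that the paper tests against an arbitrary continuous $f$ (using uniform continuity on the compact $\X$ to choose $\delta$) rather than restricting to bounded $1$-Lipschitz functions, and it integrates the coupling measures $m_{x',x''}$ directly into a single measure $m_{\mu,\nu}$ instead of first isolating the pointwise bound $|Q^n f(x')-Q^n f(x'')|\le 3\delta$ as you do.
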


\begin{proof}
Let $\mu$ be a $Q$-invariant probability measure on ${\X}$, and let $\nu$ be any probability measure on  ${\X}$. We want to prove that, for each continuous function $f:{\X}\to\RR$ and each $\eps>0$, for each $n$ large enough we have
\[ \left|\int_{{\X}} f\, d(\nu Q^n) - \int_{{\X}} f\, d\mu\right| \le\ \eps. \] 

Given $f$ and $\eps$, by uniform continuity of $f$ on the compact $\X$, we get $\delta>0$  such that :
\[
 \forall x,y\in{\X},\ d(x,y)\leq \delta\Longrightarrow \left| f(x)-f(y) \right| < \eps/2.
\]
We can also assume that $\delta\|f\|_\infty<\eps/4$. 

By uniform cosiness, we can take $M>0$ such that, given any integer $n_0\leq -M$, there exists a family of probability measures $\left(m_{x',x''}\right)_{x',x''\in{\X}}$ associated to this $\delta$ as in the statement of the definition of uniform cosiness. We then define the probability measure $m_{\mu,\nu}$ on $\bigl({\X}^{\{n_0,\ldots,0\}}\bigr)^2$ by
\[ m_{\mu,\nu} := \int_{{\X}} \int_{{\X}} m_{x',x''}\, d\mu(x')\, d\nu(x''). \]
Under $m_{\mu,\nu}$, the processes $(X'_n)_{n_0\leq n\leq0}$ and $(X''_n)_{n_0\leq n\leq0}$ are two Markov chain with transitions given by $Q$, with respective initial laws $\L(X'_{n_0})=\mu$ and $\L(X''_{n_0})=\nu$. We then have
$\L (X'_{0})=\mu{Q}^{|n_0|}\mu = \mu$ by invariance of $\mu$, and  $ \L (X''_{0})={\nu Q}^{|n_0|}$. 
The uniform cosiness yields
\[ m_{\mu,\nu}\big(d(X'_0,X''_0)>\delta\big) 
= \int_{{\X}} \int_{{\X}} \underbrace{m_{x',x''}\big(d(X'_0,X''_0)>\delta\big)}_{<\delta}\, d\mu(x')\, d\nu(x'') < \delta. \]
We then have 
\begin{align*}
 \left|\int_{{\X}} f\, d(\nu{Q}^{|n_0|}) - \int_{{\X}} f\, d\mu\right|
& \leq \EE_{m_{\mu,\nu}}\left[ \bigl| f(X''_0)-f(X'_0) \bigr| \right] \\
& \leq \int_{d(X'_0,X''_0)>\delta} \bigl| f(X''_0)-f(X'_0) \bigr| \,dm_{\mu,\nu} + \eps/2 \\
& \le 2\delta\|f\|_\infty + \eps/2 \le \eps.
\end{align*}
\end{proof}

\begin{corollary}
\label{cor:te_ergodic}
 The Markov kernel given by the cellular automaton $\te$ is ergodic in the Markov sense, and $\mu=\U_\AA^{\otimes\ZZ}$ is the unique $\te$-invariant probability measure.
\end{corollary}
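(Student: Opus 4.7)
The plan is to apply the preceding theorem directly to the kernel $\te$, which reduces the corollary to three verifications: that the state space is a compact metric space, that $\te$ is Feller, and that $\te$ is uniformly cosy. Once these are in place, ergodicity in the Markov sense follows, and Lemma~\ref{lemma:mu_invariante_te} (which says that $\mu$ is $\te$-invariant) pins down $\mu$ as the unique invariant measure.

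First I would equip $\AA^\ZZ$ with a compatible metric for the product topology, for instance $d(x,y):=\sum_{i\in\ZZ}2^{-|i|}\ind{x(i)\neq y(i)}$, which makes it a compact metric space since $\AA$ is finite. The Feller property is then routine: $\te(x,\cdot)$ is the pushforward of the fixed law of $\xi$ by the continuous map $\xi\mapsto\tau x+\xi$, and this depends continuously on $x$ in the weak$^*$ topology because $\tau$ is continuous on $\AA^\ZZ$.

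The heart of the argument is the uniform cosiness of $\te$, already announced in the paragraph preceding the theorem. Here I would revisit the coupling built in the proof of Theorem~\ref{thm:fte_standard}: for $n\ge n_0+1$ the two copies $X',X''$ are obtained recursively from their values at time $n_0$ and from the independent uniform family $\bigl(U_n(i)\bigr)$ through the deterministic local strategies $S_a$. Nothing in this recipe uses the fact that $(X'_{n_0},X''_{n_0})$ was random, so replacing this pair by arbitrary deterministic starting points $(x',x'')\in(\AA^\ZZ)^2$ yields a well-defined family of probability measures $m_{x',x''}$ on $\bigl((\AA^\ZZ)^{\{n_0,\ldots,0\}}\bigr)^2$, measurable in $(x',x'')$ and realizing a real-time coupling of $\te(x',\cdot)$ and $\te(x'',\cdot)$. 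Given $\delta>0$, I would then pick $k$ large enough that $X'_0[-k,k]=X''_0[-k,k]$ forces $d(X'_0,X''_0)<\delta$, and next $|n_0|$ large enough (depending only on $\epst$ and $k$) so that the sum of $2k+1$ i.i.d. geometric random variables of parameter $\epst$ is $\le|n_0|$ with probability $>1-\delta$. By \eqref{eq:cible_atteinte} and \eqref{eq:somme_geometrique}, this yields $m_{x',x''}\bigl(d(X'_0,X''_0)>\delta\bigr)<\delta$ for every pair $(x',x'')$.

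The only point that actually needs care is to observe that the estimate $\PP(T(k)\le0)>1-\delta$ in the proof of Theorem~\ref{thm:fte_standard} is truly independent of the configurations at time $n_0$: this is the case because the random times $T(-k),\ldots,T(k)$ are defined purely from the $U_n(i)$ via threshold conditions on disjoint diagonals, so their joint law is independent of $(x',x'')$. This is the main bookkeeping step; once it is verified, the theorem applies and gives ergodicity of $\te$ in the Markov sense, and the uniqueness of $\mu$ as $\te$-invariant measure follows immediately.
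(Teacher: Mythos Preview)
Your proposal is correct and follows exactly the route the paper intends: the corollary is stated without proof precisely because the paragraph preceding the theorem already notes that the proof of Theorem~\ref{thm:fte_standard} establishes uniform cosiness of $\te$, so that the theorem applies directly (together with Lemma~\ref{lemma:mu_invariante_te} for the identification of $\mu$). You have simply spelled out the verifications of compactness, the Feller property, and the uniformity in $(x',x'')$ of the estimate~\eqref{eq:somme_geometrique} that the paper leaves implicit.
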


\section{Dynamical classification of factor filtrations}

\subsection{Factor filtrations} 

In the two cases we have studied above, we observe that the filtrations $\ftm$ and $\fte$ both enjoy an important property: if we consider their canonical construction, which is on $\AA^\ZZ$ for $\ftm$ and on $(\AA^\ZZ)^\ZZ$ for $\te$, we have on the ambient probability space the action of the left shift $\sigma$, which is a measure-preserving transformation, and moreover all the sigma-algebras in those filtrations are invariant with respect to this automorphism. Our purpose in this section is to formalize such a situation, and adapt the study of the filtration to this context. 

We call here \emph{dynamical system} any system of the form $(\Omega,\PP,T)$, where $(\Omega,\PP)$ is a probability space, and $T:\Omega\to\Omega$ is an invertible, bi-measurable transformation which preserves the probability measure $\PP$. Given such a system, we call \emph{factor sigma-algebra} of $(\Omega,\PP,T)$ any sub-sigma algebra $\A$ of the Borel sigma algebra of $\Omega$ which is invariant by $T$: for each $A\in\A$, $T^{-1}A$ and $TA$ are also in $\A$.
For any random variable $X_0$ defined on $(\Omega,\PP)$, as $T$ preserves $\PP$, the process $X=\bigl(X(i)\bigr)_{i\in\ZZ}$ defined by 
\begin{equation}
 \label{eq:T-process}
 \forall i\in\ZZ,\ X(i):=X_0\circ T^i
\end{equation}
is stationary. Such a stationary process will be called \emph{a $T$-process}. Whenever $X$ is a $T$-process, the sigma-algebra generated by $X$ is clearly a factor sigma-algebra of $(\Omega,\PP,T)$. Conversely, any factor sigma-algebra of $(\Omega,\PP,T)$ is generated by some $T$-process (remember that all sigma-algebras are assumed to be essentially separable). 

\begin{definition}
 \label{def:factor_filtration}
 We call \emph{factor filtration} any pair $(\F,T)$ where
 \begin{itemize}
  \item $\F=\bigl(\F_n\bigr)_{n\le0}$ is a filtration on some probability space $(\Omega,\PP)$,
  \item $(\Omega,\PP,T)$ is a dynamical system,
  \item for each $n\le0$,  $\F_n$ is a factor sigma-algebra of $(\Omega,\PP,T)$. 
 \end{itemize}
\end{definition}

In view of the above discussion, in a factor filtration $(\F,T)$ the filtration $\F$ is always generated by a process $(X_n)_{n\le0}$, where for each $n$, $X_n=\bigl(X_n(i)\bigr)_{i\in\ZZ}$ is a $T$-process.

The dynamical classification of factor filtrations that we want to introduce now aims at distinguishing these objects up to the following notion of isomorphism, which is the adaptation of Definition~\ref{def:isomorphism} to the dynamical context.

\begin{definition}[Dynamical isomorphism of factor filtrations]
 \label{def:dyn_isomorphism}
 Let $(\F,T)$ and $(\F',T')$  be two factor filtrations, possibly defined on two different dynamical systems. We say that they are \emph{dynamically isomorphic} if we can find two processes $X=(X_n)_{n\leq0}$ and $X'=(X'_n)_{n\leq0}$  generating respectively $\F$ and $\F'$, and such that
 \begin{itemize}
  \item for each $n$, $X_n=\bigl(X_n(i)\bigr)_{i\in\ZZ}$ is a $T$-process, and $X'_n=\bigl(X'_n(i)\bigr)_{i\in\ZZ}$ is a $T'$-process;
  \item $\L(X)=\L(X')$.
 \end{itemize}
 We write in this case $(\F,T)\sim(\F',T')$, or $(\F,T)\isoproc{X}{X'}(\F',T')$ to specify the processes involved in the isomorphism. 
\end{definition}

We note the following specificity of dynamical isomorphism: if  $(\F,T)\isoproc{X}{X'}(\F',T')$, and if $Y'$ is the copy of an $\F_0$ random variable $Y$ provided by this isomorphism, then $Y'\circ T'$ is the corresponding copy of $Y\circ T$. 

\medskip

The notion of immersion is unchanged: $(\F,T)$ and $(\G,T)$ being two factor filtrations in the same dynamical system, we simply say that $(\F,T)$ is immersed in $(\G,T)$ if $\F$ is immersed in $\G$. However, the notion of immersibility takes into account the above definition of dynamical isomorphism.

\begin{definition}[Dynamical immersibility of factor filtrations]
 \label{def:dyn_imm}
 Let $(\F,T)$ and $(\G,S)$  be two factor filtrations, possibly defined on two different dynamical systems. We say that $(\F,T)$ is dynamically immersible in $(\G,S)$ if there exists a factor filtration $(\F',S)$ immersed in $(\G,S)$, which is dynamically isomorphic to $(\F,T)$. 
\end{definition}

We can now adapt the notions of product type and standardness to factor filtrations.

\begin{definition}
 \label{def:pt}
 The factor filtration $(\F,T)$ is said to be \emph{dynamically of product type} if there exists a family $(\G_n)_{n\le0}$ of independent factor sigma-algebras such that for each $n\le0$, $\F_n=\bigvee_{m\le n}\G_m$.
\end{definition}

Equivalently, $(\F,T)$ is of product type if $\F$ can be generated by a process $X=(X_n)_{n\leq0}$ whose coordinates $X_n$ are independent $T$-processes. Of course, if the factor filtration $(\F,T)$ is of product type, then the filtration $\F$ is itself of product type, but the converse is not true (see the example in Section~\ref{sec:dynftm}).

\begin{definition}
 \label{def:dstd}
 The factor filtration $(\F,T)$ is said to be \emph{dynamically standard} if it is dynamically immersible in some  factor filtration dynamically of product type.
\end{definition}

It is natural also to translate the notions of real-time coupling and I-cosiness to the dynamical context.
The corresponding notions are formally the same as in the static case, except that we have to replace the isomorphism of filtrations by the dynamical isomorphism of factor filtrations:

\begin{definition}[Dynamical real-time coupling]
\label{def:drtc}
 Let $(\F,T)$ be a factor filtration on a dynamical system $(\Omega,\PP,T)$. We call \emph{dynamical real-time coupling of $(\F,T)$} a pair $\bigl((\F',S),(\F'',S)\bigr)$ of factor filtrations, both defined on the same dynamical system (but possibly different from $(\Omega,\PP,T)$), such that 
 \begin{itemize}
  \item $(\F',S)\sim(\F,T)$,
  \item $(\F'',S)\sim(\F,T)$,
  \item $\F'$ and $\F''$ are jointly immersed.
 \end{itemize}
\end{definition}

\begin{definition}[Dynamical I-cosiness]
 \label{def:dyn-I-cosiness}
 Let $(\F,T)$ be a factor filtration, and let $Y$ be an $\F_0$-measurable random variable taking values in some Polish metric space $(E,d)$. 
 Then $Y$ is said to be \emph{dynamically I-cosy with respect to $(\F,T)$} if, for each real number $\delta>0$, we can find a dynamical real-time coupling $\bigl((\F',S),(\F'',S)\bigr)$ of $(\F,T)$ which is independent in the distant past, and such that the two copies $Y'$ and $Y''$ of $Y$ in $\F'$ and $\F''$ respectively satisfy
 \begin{equation}
  \label{eq:close_copies_dyn}
  d(Y',Y'')<\delta \text{ with probability } > 1-\delta.
 \end{equation}
 The factor filtration $(\F,T)$ is said to be \emph{dynamically I-cosy} if each $\F_0$-measurable random variable $Y$ is dynamically I-cosy with respect to $(\F,T)$.
\end{definition}

We expect of course some relationship between dynamical I-cosiness and dynamical standardness. In the static case, showing that standardness implies I-cosiness is not very difficult, and in fact exactly the same arguments apply in the dynamical case. We provide them below.

\begin{lemma}
 \label{lemma:dpt_implies_dic}
 Let $(\F,T)$ be a factor filtration which is dynamically of product type. Then $(\F,T)$ is dynamically I-cosy.
\end{lemma}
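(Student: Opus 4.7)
The plan is to exploit the independence structure of a dynamically product-type factor filtration in order to build a coupling in which the two copies share all their information after some distant past time $n_0$, which will make the target copies automatically close.

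First I would fix a generating process $X = (X_n)_{n \le 0}$ of $(\F, T)$ in which each $X_n$ is a $T$-process and the $X_n$'s are mutually independent (such a process exists by the equivalent formulation given after Definition~\ref{def:pt}). Given a target $Y$ taking values in a Polish metric space $(E,d)$ and a real number $\delta > 0$, a standard measure-theoretic approximation (using $\F_0 = \bigvee_{m \le 0} \tribu(X_m)$ together with a martingale convergence argument) yields an integer $n_0 \le 0$ and a measurable map $\psi$ such that the surrogate $\tilde Y := \psi(X_{n_0+1}, \ldots, X_0)$ satisfies $\PP\bigl(d(Y, \tilde Y) > \delta/2\bigr) < \delta/2$.

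Next I would build the dynamical real-time coupling on the product dynamical system $(\bar\Omega, \bar\PP, S) := (\Omega \times \Omega, \PP \otimes \PP, T \times T)$, defining $X'_n(\omega_1, \omega_2) := X_n(\omega_1)$ for every $n \le 0$; and $X''_n(\omega_1, \omega_2) := X_n(\omega_2)$ when $n \le n_0$, while $X''_n(\omega_1, \omega_2) := X_n(\omega_1)$ when $n_0 < n \le 0$. Each $X'_n$ and $X''_n$ is then an $S$-process, since $X_n$ is itself a $T$-process on the relevant coordinate. Independence of the $X_n$'s guarantees that $\L(X') = \L(X'') = \L(X)$. By construction $\F'_{n_0}$ depends only on $\omega_1$ and $\F''_{n_0}$ only on $\omega_2$, hence they are independent. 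The joint immersion condition holds because, for every $n \le -1$, each increment $X'_{n+1}$ (and symmetrically $X''_{n+1}$) is a function of a single product coordinate, independent of the $X_m$'s supported on that coordinate for $m \le n$ and independent of the other coordinate altogether, so its conditional law given $\F'_n \vee \F''_n$ coincides with its conditional law given $\F'_n$ alone.

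Finally, since $X'_m = X''_m$ for every $n_0 < m \le 0$, the two surrogates $\tilde Y'$ and $\tilde Y''$ are almost surely equal; the triangle inequality and a union bound then give $d(Y', Y'') < \delta$ with probability exceeding $1 - \delta$, establishing dynamical I-cosiness of $Y$ and hence of $(\F, T)$. The only step demanding real care is the verification of joint immersion, and even that reduces to tracking which product coordinate each relevant variable depends on; the remaining parts are routine bookkeeping.
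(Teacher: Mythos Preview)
Your proof is correct and follows essentially the same route as the paper: both build the coupling on the product system $(\Omega\times\Omega,\PP\otimes\PP,T\times T)$, letting $X'$ live on the first coordinate and splicing $X''$ from the second coordinate before $n_0$ and the first coordinate after. The only cosmetic difference is that the paper reduces at the outset to targets $Y$ measurable with respect to $\tribu(X_{n_0+1},\ldots,X_0)$ (so that $Y'=Y''$ exactly), whereas you keep a general $Y$ and handle the approximation by $\tilde Y$ explicitly via the triangle inequality; both amount to the same thing.
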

\begin{proof}
 Let $(\Omega,\PP,T)$ be the dynamical system where the factor filtration is defined, and let $X=(X_n)_{n\le0}$ be a process generating $\F$, where the coordinates $X_n$ are independent $T$-processes: $X_n=\bigl(X_n(i)\bigr)_{i\in\ZZ}=\bigl(X_n(0)\circ T^i\bigr)_{i\in\ZZ}$. 
 
 We have to check the dynamical I-cosiness with respect to $(\F,T)$ of a target random variable $Y$ which is $\F_0$-measurable. It is enough to consider the case where $Y$ is measurable with respect to $\tribu(X_{n_0+1},\ldots,X_0)$ for some $n_0\le -1$. 
 
 On the product dynamical system $(\Omega\times\Omega,\PP\otimes\PP,T\times T)$, we have two independent copies $X^{(1)}$ and $X^{(2)}$ of $X$, whose coordinates are independent $(T\times T)$-processes. Let us now consider the two  copies $X'$ and $X''$ of $X$ defined by $X':=X^{(1)}$, and 
 \[ X''_n := \begin{cases}
              X^{(2)}_n & \text{ if }n\le n_0,\\
              X^{(1)}_n & \text{ if }n\ge n_0+1.\
             \end{cases}
\]
Define $\F'$ (respectively $\F''$) as the filtration generated by $X'$ (respectively $X''$). Then $\bigl((\F',T\times T),(\F'',T\times T)\bigr)$ is an $n_0$-independent dynamical coupling of $(\F,T)$. Moreover, since by construction we have $X'_n=X''_n$ for $n\ge n_0+1$, the corresponding copies of $Y'$ and $Y''$ of $Y$ satisfy $Y'=Y''$. This concludes the proof.
\end{proof}

\begin{lemma}
 \label{lemma:dim_dic}
 Let $(\F,T)$ and $(\G,S)$ be two factor filtrations. Assume that $(\G,S)$ is dynamically I-cosy, and that $(\F,T)$ is dynamically immersible in $(\G,S)$. Then $(\F,T)$ is dynamically I-cosy.
\end{lemma}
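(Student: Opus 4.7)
The plan is to mimic the static argument that standardness transfers through immersibility, being careful that all isomorphisms respect the $T$-process structure and that immersion, being a law-theoretic property, survives the transport. Fix an $\F_0$-measurable target $Y$ and a tolerance $\delta>0$; the goal is to produce a dynamical real-time coupling of $(\F,T)$, independent in the distant past, in which two copies of $Y$ agree within $\delta$ with probability exceeding $1-\delta$.

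First, use the dynamical immersibility of $(\F,T)$ in $(\G,S)$ to replace $(\F,T)$ by a dynamically isomorphic factor filtration $(\tilde\F,S)$ immersed in $(\G,S)$. This dynamical isomorphism sends $Y$ to an $\tilde\F_0$-measurable copy $\tilde Y$, which is then also $\G_0$-measurable. Applying dynamical I-cosiness of $(\G,S)$ to the target $\tilde Y$ produces a dynamical real-time coupling $\bigl((\G',R),(\G'',R)\bigr)$ of $(\G,S)$, on some dynamical system $(\Omega^*,\PP^*,R)$, such that $\G'_{n_0}$ and $\G''_{n_0}$ are independent for some $n_0\le 0$, and the copies $\tilde Y',\tilde Y''$ of $\tilde Y$ satisfy $d(\tilde Y',\tilde Y'')<\delta$ with probability $>1-\delta$.

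Next, restrict each copy of $\G$ to the corresponding copy of $\tilde\F$. The dynamical isomorphisms $(\G',R)\sim(\G,S)$ and $(\G'',R)\sim(\G,S)$ are realized by $R$-processes generating $\G'$ and $\G''$; since $\tilde\F$ is a factor sub-filtration of $\G$ generated by some measurable function of such a generating process, the same function applied to the generating $R$-processes defines factor sub-filtrations $(\tilde\F',R)\sim(\tilde\F,S)\sim(\F,T)$ and $(\tilde\F'',R)\sim(\tilde\F,S)\sim(\F,T)$, with $\tilde\F'\subset\G'$ and $\tilde\F''\subset\G''$. Because the immersion of $\tilde\F$ in $\G$ is encoded by conditional independence relations depending only on joint laws, it is preserved under dynamical isomorphism, so $\tilde\F'\immersed\G'$ and $\tilde\F''\immersed\G''$.

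Finally, combine these with the joint immersion of $(\G',\G'')$: since $\tilde\F'\immersed\G'$ and $\G'$ is immersed in $\G'\vee\G''\supset \tilde\F'\vee\tilde\F''$, a standard transitivity lemma gives $\tilde\F'\immersed\tilde\F'\vee\tilde\F''$, and symmetrically for $\tilde\F''$; so $\tilde\F'$ and $\tilde\F''$ are jointly immersed. The independence of $\tilde\F'_{n_0}$ and $\tilde\F''_{n_0}$ follows from that of $\G'_{n_0}$ and $\G''_{n_0}$, and the copies of $Y$ in the dynamical real-time coupling $\bigl((\tilde\F',R),(\tilde\F'',R)\bigr)$ of $(\F,T)$ coincide with $\tilde Y'$ and $\tilde Y''$, hence satisfy the required closeness. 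The main point requiring care is the preservation of immersion under dynamical isomorphism and its transitivity along chains of sub-filtrations; once these routine facts are isolated, the transfer goes through as in the static setting.
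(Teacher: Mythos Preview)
Your proposal is correct and follows essentially the same route as the paper's proof: reduce via dynamical isomorphism to the case where $\F$ is immersed in $\G$, apply dynamical I-cosiness of $\G$ to the target $Y$ viewed as a $\G_0$-measurable variable, then pull back the copies of $\F$ inside the resulting copies of $\G$ and use transitivity of immersion to obtain joint immersion. The paper states the joint-immersion step in one line (``since $\G'$ and $\G''$ are jointly immersed, and $\F'$ is immersed in $\G'$ \ldots''), whereas you spell out the transitivity and the restriction-to-a-subfiltration argument more explicitly; but the content is the same.
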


\begin{proof}
Since cosiness is preserved by isomorphism, we can assume without loss of generality that $(\F,T)$ is \emph{immersed} in some dynamically I-cosy factor filtration $(\G,T)$. Let $X=(X_n)_{n\le0}$ be a process generating $\F$, where the coordinates $X_n$ are $T$-processes.
 Let $Y$ be an $\F_0$-measurable random variable, taking values in the Polish metric space $(E,d)$, and let $\delta>0$. Since $\F_0\subset \G_0$, and since $(\G,T)$ is dynamically I-cosy, there exists a dynamical real-time coupling $\bigl((\G',S),(\G'',S)\bigr)$ of $(\G,T)$, independent in the distant past, such that the corresponding copies $Y'$ and $Y''$ of $Y$ satisfy $d(Y',Y'')<\delta$ with probability at least $1-\delta$. But the generating process $X$ is $\G_0$-measurable, so we can consider its copies $X'$ and $X''$ in $\G'$ and $\G''$ respectively.
 Let $\F'$ (respectively $\F''$) be the filtration generated by $X'$ (respectively $X''$). Since these copies are provided by a dynamical isomorphism, the coordinates of $X'$ and $X''$ are $S$-processes, hence we get two dynamically isomorphic copies $(\F',S)$ and $(\F'',S)$ of the factor filtration $(\F,T)$.  Since $\G'$ and $\G''$ are jointly immersed, and $\F'$ (respectively $\F''$) is immersed in $\G'$ (respectively $\G''$), $\F'$ and $\F''$ are jointly immersed. Hence we get a dynamical real-time coupling $\bigl((\F',S),(\F'',S)\bigr)$ of the factor filtration $(\F,T)$. Moreover this coupling is independent in the distant past because this is the case for $\bigl((\G',S),(\G'',S)\bigr)$. But $Y'$ and $Y''$ are respectively $\F'_0$ and $\F''_0$-measurable, as $Y$ is $\F_0$-measurable, so they are also the corresponding copies of $Y$ in the dynamical real-time coupling $\bigl((\F',S),(\F'',S)\bigr)$.
\end{proof}

From Lemma~\ref{lemma:dpt_implies_dic} and Lemma~\ref{lemma:dim_dic}, we immediately derive the following theorem:

\begin{theo}
 \label{thm:dstd_implies_dic}
 If the factor filtration $(\F,T)$ is dynamically standard, then it is dynamically I-cosy.
\end{theo}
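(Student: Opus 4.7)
The statement is essentially a direct combination of the two preceding lemmas, so my plan is simply to chain them together. I would unwind the definition of dynamical standardness: by Definition~\ref{def:dstd}, saying $(\F,T)$ is dynamically standard means there exists a factor filtration $(\G,S)$ that is dynamically of product type, into which $(\F,T)$ is dynamically immersible.

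First, I would invoke Lemma~\ref{lemma:dpt_implies_dic} applied to $(\G,S)$: being dynamically of product type, it is dynamically I-cosy. Then I would invoke Lemma~\ref{lemma:dim_dic} applied to the pair $(\F,T)$ and $(\G,S)$: since $(\F,T)$ is dynamically immersible in the dynamically I-cosy factor filtration $(\G,S)$, we conclude that $(\F,T)$ itself is dynamically I-cosy.

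There is no real obstacle here, since all the work has been done in the two lemmas. The only thing to be careful about is to state the application cleanly, and not to confuse the two notions of isomorphism (static versus dynamical) when transporting a target random variable through the immersibility. That delicate point is already handled inside the proof of Lemma~\ref{lemma:dim_dic}, where the copies of the generating process $X$ are obtained via the dynamical isomorphism, which guarantees that their coordinates are again $S$-processes and hence that the resulting coupling is a \emph{dynamical} real-time coupling of $(\F,T)$, not just a static one. So my proof would simply read: by Definition~\ref{def:dstd}, apply Lemma~\ref{lemma:dpt_implies_dic} then Lemma~\ref{lemma:dim_dic}.
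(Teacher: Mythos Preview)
Your proposal is correct and matches the paper's own approach exactly: the paper states that the theorem follows immediately from Lemma~\ref{lemma:dpt_implies_dic} and Lemma~\ref{lemma:dim_dic}, which is precisely the chaining you describe.
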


Whether the converse of the above theorem is true is still for us an open question (see a partial result in this direction in Section~\ref{sec:converse}). However, the fact that dynamical I-cosiness is necessary for dynamical standardness can already be used to establish that some factor filtrations are not dynamically standard, as in the following section.

\subsection{Dynamical vs. static classification of filtrations: the example of $\ftm$}
\label{sec:dynftm}

We come back again to the filtration $\ftm$ associated to the deterministic cellular automaton $\tau$, and defined in Section~\ref{sec:ftm}. 
The probability measure $\mu=\U_\AA^{\otimes\ZZ}$ on $\AA^\ZZ$ is invariant by the shift map $\sigma$. Furthermore, since the transformations $\tau$ and $\sigma$ commute, each coordinate $X_n=\bigl(X_n(i)\bigr)_{i\in\ZZ}$  of the process generating the filtration $\ftm$ is itself a stationary $\sigma$-process: for each $i\in\ZZ$, $X_n(i)=X_n(0)\circ \sigma^i$. Thus, $\bigl(\ftm,\sigma\bigr)$ is a factor filtration in the dynamical system $\bigl(\AA^\ZZ,\mu,\sigma\bigr)$.

We recall that, if we look at the filtration $\ftm$ from the static point of view, it is of product type (Theorem~\ref{thm:ftm_product_type}). However the following result shows that dynamical classification of factor filtrations may lead to different results than the static one.

\begin{theo}
\label{thm:ftm_not_dyn_standard}
 The factor filtration $(\ftm,\sigma)$ is not dynamically standard.
\end{theo}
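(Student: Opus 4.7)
The plan is to falsify dynamical I-cosiness of $(\ftm,\sigma)$ at the scalar target $Y := X_0(0)$; by Theorem~\ref{thm:dstd_implies_dic} this forces the absence of dynamical standardness. Concretely, I will exhibit a $\delta>0$ such that no dynamical real-time coupling $\bigl((\F',S),(\F'',S)\bigr)$ of $(\ftm,\sigma)$, independent in the distant past, can satisfy $\PP(Y'=Y'')>1-\delta$.

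A preliminary reduction: fix a prime $p$ dividing $\cA$ and choose a surjective group homomorphism $\pi:\AA\to\ZZ/p\ZZ$. Extended coordinate-wise, $\pi$ intertwines $\tau$ with the analogous map on $(\ZZ/p\ZZ)^\ZZ$ and pushes $\mu$ forward to Haar measure there. Any dynamical coupling of $(\ftm,\sigma)$ projects to a dynamical coupling of the corresponding $\ZZ/p\ZZ$ factor filtration, preserving $n_0$-independence, so it is enough to argue when $\AA=\ZZ/p\ZZ$, with target $\pi(X_0(0))$, which I still denote $Y$.

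Given a coupling $\bigl((\F',S),(\F'',S)\bigr)$ which is $n_0$-independent, observe that $\F'_m\subset\F'_{n_0}$ and $\F''_m\subset\F''_{n_0}$ for every $m\le n_0$, so the coupling is also $n_0'$-independent for every $n_0'\le n_0$; I may therefore pick $n_0'$ with $|n_0'|=p^M$ for $M$ as large as I please. Set $Z_n:=X'_n-X''_n$; each $Z_n$ is an $S$-process, the linearity of $\tau$ gives $Z_{n-1}=\tau Z_n$, hence $Z_{n_0'}=\tau^{p^M}Z_0$, and since $X'_{n_0'}$ and $X''_{n_0'}$ are independent Haar-distributed on $\AA^\ZZ$, their difference $Z_{n_0'}$ is itself Haar on $\AA^\ZZ$. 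The key algebraic input is the Freshman's Dream in characteristic $p$: since $\AA^\ZZ$ is a vector space over the field with $p$ elements and $\sigma$, $\Id$ commute,
\[ \tau^{p^M} = (\sigma+\Id)^{p^M} = \sigma^{p^M}+\Id, \]
so the relation $Z_{n_0'}=\tau^{p^M}Z_0$ specialises to $Z_{n_0'}(0)=Z_0(0)+Z_0(p^M)$.

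Suppose now $\PP(Y'=Y'')>1-\delta$, equivalently $\PP(Z_0(0)=0)>1-\delta$. Since $Z_0$ is an $S$-process on a probability-preserving system, $\PP(Z_0(p^M)=0)>1-\delta$ as well, and a union bound gives $\PP(Z_0(0)+Z_0(p^M)=0)>1-2\delta$. But $Z_{n_0'}(0)$ is uniform on $\ZZ/p\ZZ$, so $\PP(Z_0(0)+Z_0(p^M)=0)=1/p\le 1/2$, forcing $\delta\ge 1/4$. Therefore any $\delta<1/4$ rules out I-cosiness of $Y$, and $(\ftm,\sigma)$ is not dynamically standard. I expect no serious technical obstacle: the essential insight is the algebraic rigidity of $\tau=\sigma+\Id$ modulo $p$, and the liberty to push $n_0$ back to a $p$-power turns the Freshman's Dream into an unconditional obstruction to closeness of the two copies of $Y$.
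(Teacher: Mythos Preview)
Your proof is correct and takes a genuinely different route from the paper. Both arguments target $X_0(0)$, introduce the difference process $Z_n=X'_n-X''_n$, and use that $n_0$-independence forces $Z_{n_0}$ (and hence $Z_{n_0'}$ for any $n_0'\le n_0$) to be Haar on $\AA^\ZZ$. From there the paths diverge.

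The paper invokes Kolmogorov--Sinai entropy: since $Z_{n_0}=\tau^{|n_0|}Z_0$ is a factor of the $T$-process $Z_0$, the chain $\log\cA = h(Z_{n_0},T)\le h(Z_0,T)\le \log\cA$ forces $Z_0$ itself to be the uniform Bernoulli process, whence $\PP\bigl(Z_0(0)=0\bigr)=1/\cA$ exactly. (An alternative proof given in the paper, based on Furstenberg disjointness and Lemma~\ref{lemma:reciprocal}, extends this conclusion to infinite compact Abelian $\AA$.) Your argument instead exploits the characteristic-$p$ identity $\tau^{p^M}=(\sigma+\Id)^{p^M}=\sigma^{p^M}+\Id$, which collapses the obstruction to the single scalar equation $Z_{n_0'}(0)=Z_0(0)+Z_0(p^M)$ together with a union bound. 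This yields only the weaker estimate $\PP\bigl(\pi(Z_0)(0)=0\bigr)\le (p+1)/(2p)$, but that suffices, and the proof is entirely elementary---no entropy theory, no disjointness. One minor remark: the phrase ``projects to a dynamical coupling of the corresponding $\ZZ/p\ZZ$ factor filtration'' claims slightly more than you actually use; all you need is that $\pi(Z_n)$ is an $S$-process satisfying $\pi(Z_{n-1})=\tau\,\pi(Z_n)$ and that $\pi(Z_{n_0'})$ is Haar, neither of which requires verifying the joint-immersion axioms for the projected filtrations.
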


\begin{proof}
 The strategy consists in showing that the factor filtration $(\ftm,\sigma)$, and more precisely the target $X_0(0)$, is not dynamically I-cosy. For this, we consider a real-time dynamical coupling of
 this factor filtration in a dynamical system $(\Omega,\PP,T)$, so that we have two copies $X'=(X'_n)_{n\leq0}$ and $X''=(X''_n)_{n\leq0}$ of the process $(X_n)$, where for each $n\leq0$, 
 \begin{itemize}
  \item $X'_n=\bigl(X'_n(i)\bigr)_{i\in\ZZ}=\tau^{|n|} X'_0$,
  \item $X''_n=\bigl(X''_n(i)\bigr)_{i\in\ZZ}=\tau^{|n|} X''_0$,
 \end{itemize}
   and for each $i\in\ZZ$, 
\begin{itemize}
 \item $X'_n(i)=X'_n(0)\circ T^i$,
 \item $X''_n(i)=X''_n(0)\circ T^i$.
\end{itemize}
For each $n\leq0$, set $Z_n:=X'_n-X''_n$. Then for each $n$, $Z_n=\bigl(Z_n(i)\bigr)_{i\in\ZZ}=\bigl(Z_n(0)\circ T^i\bigr)_{i\in\ZZ}$ is a stationary $T$-process taking values in $\AA$. And since $\tau$ is an endomorphism of the group $\AA^\ZZ$, we also have $Z_n=\tau^{|n|}Z_0$ for each $n\leq0$. 

Assume that this coupling is $n_0$-independent for some $n_0\leq0$, in other words $X'_{n_0}$ and $X''_{n_0}$ are independent. Then the probability distribution of $Z_{n_0}$ is nothing but $\mu=\U_\AA^{\otimes\ZZ}$. Therefore, the Kolmogorov-Sinaï entropy of this $T$-process is 
\[ h(Z_{n_0},T)=\log\cA.\] 
But we have 
\[ Z_{n_0}=\tau^{|n_0|}Z_0, \]
so that the $T$-process $Z_{n_0}$ is a factor of the $T$-process $Z_{0}$. In particular, since the Kolmogorov-Sinaï entropy can not increase when we pass to a factor, and observing that $Z_0$ is a $T$ process taking values in $\AA$  
\[ \log\cA = h(Z_{n_0},T)\leq h(Z_{0},T)\leq \log\cA.\]
However the only $T$-process taking values in $\AA$ and whose entropy is $\log\cA$ is the uniform Bernoulli process. It follows that the probability distribution of $Z_0$ is necessarily also $\mu$. Therefore, 
\[ \PP\bigl(X'_0(0)=X''_0(0)\bigr) = \PP\bigl(Z_0(0)=0\bigr) = \frac{1}{\cA},  \]
which prevents the dynamical I-cosiness of the target $X_0(0)$.
\end{proof}

\subsubsection{An alternative proof for the case of an infinite group}
In this section only we allow the non-trivial Abelian group $(\AA,+)$ to be metrizable compact, not necessarily finite. The action of the deterministic cellular automaton $\tau$ on $\AA^\ZZ$ can be defined by the same formula as in the finite case~\eqref{def:tau}. We denote by $\U_\AA$ the Haar measure on $\AA$ (which is the uniform measure in the case of a finite group). Still, $\mu=\U_\AA^{\otimes\ZZ}$ is invariant by $\tau$ and by the shift map $\sigma$, and we can consider the filtration $\ftm$ defined in Section~\ref{sec:ftm}, as well as the factor filtration $(\ftm,\sigma)$ in this more general context. Like in the finite case, we can show by exactly the same arguments that $\ftm$ is of product type even in the non-finite case. But the above proof of Theorem~\ref{thm:ftm_not_dyn_standard} makes an essential use of the fact that $\AA$ is a \emph{finite} Abelian group. We propose below an alternative proof, which replaces the entropy argument by a coupling argument, and which is still valid in the infinite case. 

We first need to recall some basic facts about joinings of stationary processes in ergodic theory.

\begin{definition}[Joining]
Let $I$ be a finite or countable set of indices. For $i\in I$, let $X_i$ be a $T_i$-process in a dynamical system $(\Omega_i,\PP_i,T_i)$. We call \emph{dynamical coupling} or \emph{joining} of the processes $(X_i)_{i\in I}$ the simultaneous realization \emph{in the same dynamical system} $(\overline\Omega,\overline\PP,\overline T)$ of $\overline T$-processes $\overline X_i$, $i\in I$, such that $\L(\overline X_i)=\L(X_i)$ for each  $i\in I$.
\end{definition}

\begin{definition}[Disjointness]
 The stationary processes $X_1$ and $X_2$ are said to be \emph{disjoint} if, for each joining $(\overline X_1,\overline X_2)$ of $X_1$ and $X_2$, $\overline X_1$ and $\overline X_2$ are independent.
\end{definition}

Disjointness was introduced by Furstenberg in 1967 in his famous paper~\cite{furstenberg67}, where he also provided one of the first examples of disjointness:

\begin{theo}
 \label{thm:disjonction}
 Let $X_1$ be a Bernoulli process (\textit{i.e.} a stationary process with independent coordinates), and let $X_2$ be a stationary process with zero entropy. Then $X_1$ and $X_2$ are disjoint.
\end{theo}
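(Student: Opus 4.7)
My strategy is to force independence in any joining of $X_1$ and $X_2$ via a Kolmogorov--Sina\"i entropy balance, combining the zero-entropy hypothesis on $X_2$ with the i.i.d.\ structure of $X_1$.

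I would start by fixing an arbitrary joining $(\overline X_1,\overline X_2)$ of $X_1$ and $X_2$, realized in a common dynamical system $(\overline\Omega,\overline\PP,\overline T)$, and aim to show that $\tribu(\overline X_1)$ and $\tribu(\overline X_2)$ are independent under $\overline\PP$. Setting $\overline X:=(\overline X_1,\overline X_2)$, subadditivity of entropy and the fact that $\overline X_1$ is a factor of $\overline X$ yield the two-sided bound
\[h(X_1)\;=\;h(\overline X_1)\;\le\;h(\overline X)\;\le\;h(\overline X_1)+h(\overline X_2)\;=\;h(X_1)+0,\]
hence $h(\overline X)=h(X_1)$, and the conditional entropy rate satisfies $h(\overline X_1\mid\overline X_2)=h(\overline X)-h(\overline X_2)=h(X_1)$.

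The next step is to rewrite this rate as a one-step conditional entropy. For jointly stationary finite-valued processes one has the classical identity
\[h(\overline X_1\mid\overline X_2)\;=\;H\bigl(\overline X_1(0)\,\big|\,\tribu(\overline X_1(n):n<0)\vee\tribu(\overline X_2)\bigr),\]
whereas the i.i.d.\ nature of $X_1$ gives $h(X_1)=H(\overline X_1(0))$. Putting these together one obtains equality between an unconditional entropy and an entropy of the same variable conditioned on a larger $\sigma$-algebra, which can only happen when $\overline X_1(0)$ is independent of $\tribu(\overline X_1(n):n<0)\vee\tribu(\overline X_2)$. By stationarity of the joining, this independence then holds with $0$ replaced by every $n\in\ZZ$ and the past shifted accordingly.

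From this, a straightforward induction on $k$ shows that every finite block $\bigl(\overline X_1(n),\ldots,\overline X_1(n+k)\bigr)$ is independent of $\tribu(\overline X_2)$, and the $\pi$--$\lambda$ theorem extends the independence to the full $\sigma$-algebras, giving $\tribu(\overline X_1)\perpep\tribu(\overline X_2)$, i.e.\ disjointness. The main obstacle I anticipate is the single-step formula for $h(\overline X_1\mid\overline X_2)$: it is immediate when $X_1$ takes finitely many values, but more delicate when the state space of $X_1$ is infinite, since $h(X_1)$ may be infinite and the entropy arithmetic above breaks down. In that case I would either reduce to finite-valued factors of $X_1$ generating $\tribu(X_1)$ and use that disjointness passes to the limit under such generating families, or replace the entropy argument by the spectral / weak-mixing argument of Furstenberg's original proof.
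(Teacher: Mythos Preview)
The paper does not give its own proof of this theorem: it is stated as a classical result of Furstenberg~\cite{furstenberg67} and used as a black box in the proof of Lemma~\ref{lemma:reciprocal}. So there is no ``paper proof'' to compare your proposal against.

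That said, your entropy argument is correct and is one of the standard routes to this result. The chain $h(\overline X_1)\le h(\overline X)\le h(\overline X_1)+h(\overline X_2)$ together with $h(\overline X_2)=0$ indeed forces $h(\overline X_1\mid\overline X_2)=h(X_1)$; the one-step identity $h(\overline X_1\mid\overline X_2)=H\bigl(\overline X_1(0)\,\big|\,\tribu(\overline X_1(n):n<0)\vee\tribu(\overline X_2)\bigr)$ is valid for a finite generating partition, and comparison with $H(\overline X_1(0))$ yields the independence of $\overline X_1(0)$ from the past of $\overline X_1$ joined with all of $\overline X_2$. Your inductive step and the $\pi$--$\lambda$ conclusion are fine.

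Your caveat about infinite state spaces is well taken, and your proposed fix (pass to an increasing sequence of finite-valued factors of $X_1$ that generate $\tribu(X_1)$) works: in the given joining, each such factor is itself a finite-alphabet Bernoulli process, hence independent of $\tribu(\overline X_2)$ by the finite case, and independence then passes to the generated $\sigma$-algebra by martingale convergence. This is precisely the level of generality needed in the paper, since Lemma~\ref{lemma:reciprocal} applies the theorem with $\overline X_2$ a process on a possibly infinite compact Abelian group.
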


The following result is a direct application of the construction of a relatively independent joining over a common factor (see for example~\cite{joinings}).

\begin{lemma}
 \label{lemma:cri}
 Let $X_1$ and $Y_1$ be two $T_1$-processes in a dynamical system $(\Omega_1,\PP_1,T_1)$, and let Let $X_2$ and $Y_2$ be two $T_2$-processes in a second dynamical system $(\Omega_2,\PP_2,T_2)$. Assume that $\L(Y_1)=\L(Y_2)$. then there exists a joining  $(\overline{X}_1,,\overline{Y}_1,\overline{X}_2,\overline{Y}_2)$ of $X_1$, $Y_1$, $X_2$, $Y_2$, such that
 \begin{itemize}
  \item $\L(\overline{X}_1,\overline{Y}_1)=\L({X}_1,{Y}_1)$,
  \item $\L(\overline{X}_2,\overline{Y}_2)=\L({X}_2,{Y}_2)$,
  \item $\overline{Y}_1=\overline{Y}_2$ a.s.  
 \end{itemize}
\end{lemma}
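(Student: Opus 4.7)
The plan is to construct the desired joining as the \emph{relatively independent joining over the common $Y$-factor}. The key observation is that $Y_1$ and $Y_2$, viewed as measurable maps from $\Omega_i$ into the space $E^{\ZZ}$ of $Y$-trajectories (where $E$ denotes the state space of $Y_1(0)=Y_2(0)$), are both $T_i$--$\sigma$ equivariant factor maps, and both push $\PP_i$ forward to the same probability measure $\nu$ on $E^{\ZZ}$. We can therefore ``glue'' $\Omega_1$ and $\Omega_2$ along these common factors.

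Concretely, I would take $\overline\Omega:=\Omega_1\times\Omega_2$ with the transformation $\overline T:=T_1\times T_2$, and define $\overline\PP$ by disintegrating $\PP_1$ and $\PP_2$ along the factor maps $\pi_i:\omega_i\mapsto Y_i(\omega_i)$. By the existence of regular conditional probabilities on Polish spaces, we have disintegrations
\[
\PP_i \;=\; \int_{E^{\ZZ}} \PP_i(\,\cdot\,\mid \pi_i=y)\,d\nu(y) \qquad (i=1,2),
\]
and we set
\[
\overline\PP \;:=\; \int_{E^{\ZZ}} \PP_1(\,\cdot\,\mid \pi_1=y)\otimes \PP_2(\,\cdot\,\mid \pi_2=y)\,d\nu(y).
\]
Define $\overline X_i$ and $\overline Y_i$ on $\overline\Omega$ as the pullbacks of $X_i$ and $Y_i$ under the canonical projection $\overline\Omega\to\Omega_i$.

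Next I would verify the three required properties. The marginal of $\overline\PP$ on $\Omega_i$ is $\PP_i$, so $\L(\overline X_i,\overline Y_i)=\L(X_i,Y_i)$ for $i=1,2$. Under $\overline\PP$, the fiber over a generic $y$ is supported on $\pi_1^{-1}(y)\times\pi_2^{-1}(y)$, so $\pi_1(\omega_1)=\pi_2(\omega_2)=y$ almost surely, which gives $\overline Y_1=\overline Y_2$ a.s. Finally, the joining is a joining of \emph{processes}: since $T_1$ preserves $\PP_1$ and $\pi_1\circ T_1=\sigma\circ \pi_1$, the conditional law $\PP_1(\,\cdot\,\mid\pi_1=y)$ is mapped by $T_1$ to $\PP_1(\,\cdot\,\mid\pi_1=\sigma y)$, and similarly for $T_2$; combined with the $\sigma$-invariance of $\nu$, this yields $(\overline T)_*\overline\PP=\overline\PP$.

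The only step that requires minor care is the shift-invariance of $\overline\PP$ under $\overline T$, which hinges on the fact that the two disintegration kernels transform consistently under the two dynamics. This is essentially automatic once one observes the equivariance $\pi_i\circ T_i=\sigma\circ\pi_i$, so there is no genuine obstacle; the lemma is really a packaging statement for a standard construction, and I would present the proof as a two- or three-sentence invocation of this relatively independent joining.
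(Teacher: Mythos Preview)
Your proposal is correct and is precisely the construction the paper has in mind: the paper does not spell out a proof but simply states that the lemma ``is a direct application of the construction of a relatively independent joining over a common factor'' and refers to~\cite{joinings}. You have written out this standard construction (disintegrate each $\PP_i$ along the common factor, take the fiberwise product, integrate against $\nu$), together with the equivariance check needed for $\overline T$-invariance; so your approach coincides with the paper's, only with the details filled in.
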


The invariance of $\mu$ by $\tau$ tells us that, if a random variable $X$ takes its values in $\AA^\ZZ$ and follows the distribution $\mu$, then the law of $\tau X$ is also $\mu$. The following lemma, which is the key ingredient in the alternative proof, can be viewed as the reciprocal of this fact in the stationary case.

\begin{lemma}
 \label{lemma:reciprocal}
 Let $X$ be a random variable taking values in $\AA^\ZZ$, whose law is $\sigma$-invariant. If $\L(\tau X)=\mu$, then $\L(X)=\mu$.
\end{lemma}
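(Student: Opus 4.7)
The plan is to replace the entropy-comparison argument of Theorem~\ref{thm:ftm_not_dyn_standard} by a disjointness argument applied to a relatively independent joining. Take an auxiliary dynamical system carrying a $\sigma$-process $X_2$ with $\L(X_2)=\mu$, and recall that by Lemma~\ref{lemma:mu_invariante_te} we then also have $\L(\tau X_2)=\mu=\L(\tau X)$. Apply Lemma~\ref{lemma:cri} with $Y_1:=\tau X$ and $Y_2:=\tau X_2$: we obtain a joining, on some shift dynamical system $(\overline\Omega,\overline\PP,\overline\sigma)$, of four $\overline\sigma$-processes $(\overline X_1,\overline Y_1,\overline X_2,\overline Y_2)$ such that $\L(\overline X_1,\overline Y_1)=\L(X,\tau X)$, $\L(\overline X_2,\overline Y_2)=\L(X_2,\tau X_2)$, and $\overline Y_1=\overline Y_2$ a.s.

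Set $W:=\overline X_1-\overline X_2$, which is a $\overline\sigma$-process with values in $\AA$. From $\tau\overline X_1=\overline Y_1=\overline Y_2=\tau\overline X_2$ we read off that $W(i)+W(i+1)=\zA$ for every $i\in\ZZ$, so that $W(i)=(-1)^i W(0)$ almost surely. In particular $W\circ\overline\sigma^2=W$, so $\overline\sigma^2$ acts trivially on $\tribu(W)$; therefore $W$ has zero Kolmogorov–Sina\"i entropy as a $\overline\sigma$-process, even though $W(0)$ may have infinite Shannon entropy when $\AA$ is not finite.

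Now $(W,\overline X_2)$ is a joining of $W$ with the Bernoulli process $\overline X_2\sim\mu$, and $W$ has zero entropy. By Furstenberg's disjointness theorem (Theorem~\ref{thm:disjonction}), $W$ and $\overline X_2$ are independent under $\overline\PP$. Since $\overline X_1=\overline X_2+W$ and the law $\mu$ is invariant under translations of $\AA^\ZZ$, conditionally on $W$ the law of $\overline X_1$ is $\mu$; hence unconditionally $\L(\overline X_1)=\mu$, which gives $\L(X)=\mu$.

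The main obstacle is precisely what the entropy argument of Theorem~\ref{thm:ftm_not_dyn_standard} used: one wants to say that $X$ contains ``as much randomness'' as $\tau X$. When $\AA$ is infinite, comparing entropies of single coordinates is useless, but the zero-entropy rate of the auxiliary process $W$ (which comes from its period-$2$ structure) suffices to invoke Furstenberg's disjointness and recover the conclusion.
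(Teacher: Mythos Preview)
Your proof is correct and follows essentially the same route as the paper's own argument: build a relatively independent joining over the common factor $\tau X$, form the difference process, observe it is $2$-periodic hence has zero entropy, apply Furstenberg's disjointness to get independence from the Bernoulli process $\overline X_2$, and conclude via the Haar-measure translation invariance. The only cosmetic difference is that the paper cites directly the $\tau$-invariance of $\mu$ rather than Lemma~\ref{lemma:mu_invariante_te}, but this does not affect the argument.
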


\begin{proof}
 Let $\rho$ be a $\sigma$-invariant probability measure on $\AA^\ZZ$, whose pushforward measure by $\tau$ is  $\tau_*(\rho)=\mu$. In the dynamical system $\bigl(\AA^\ZZ,\rho,\sigma\bigr)$, we consider the $\sigma$-process $X_1$ defined by the coordinates, and we set $Y_1:=\tau(X_1)$, so that $Y_1$ is a $\sigma$-process with law $\mu$. 
 We also consider a second dynamical system $\bigl(\AA^\ZZ,\mu,\sigma\bigr)$, where we define the $\sigma$-process $X_2$ by the coordinates, and we set $Y_2:=\tau(X_2)$. We have $\L(X_2)=\L(Y_2)=\mu$. By Lemma~\ref{lemma:cri} there exists a joining $(\overline{X}_1,,\overline{Y}_1,\overline{X}_2,\overline{Y}_2)$ realized in a dynamical system $(\overline\Omega,\overline\PP,\overline T)$, such that:
\begin{itemize}
 \item $\L(\overline{X}_1)=\rho$,
 \item $\L(\overline{X}_2)=\mu$,
 \item $\tau(\overline{X}_1)=\overline{Y}_1=\overline{Y}_2=\tau(\overline{X}_2)$.
\end{itemize}
Set $Z:=\overline{X}_1-\overline{X}_2$. We have $\tau(Z)=\tau(\overline{X}_1)-\tau(\overline{X}_2)=0$, therefore for each $i\in\ZZ$, 
\[ Z(i)=Z(0)\circ \overline{T}^i= (-1)^i Z(0). \]
The process $Z$ being 2-periodic, its Kolmogorov-Sinaï entropy vanishes: 
\[ h(Z,\overline{T})=0. \] 
Moreover, $\overline{X}_2$ has independent coordinates since its law is $\mu$. By Theorem~\ref{thm:disjonction}, $\overline{X}_2$ is disjoint of $Z$, hence independent of $Z$. But we have $\overline{X}_1=Z+\overline{X}_2$, and the law of $\overline{X}_2$ is the Haar measure $\mu$ on $\AA^\ZZ$. It follows that the law $\rho$ of $\overline{X}_1$ is also $\mu$.
\end{proof}

Note that the assumption of $\sigma$-invariance in Lemma~\ref{lemma:reciprocal} cannot be removed, because we could take a random variable $Y$ of law $\mu$, and define $X$ as the only preimage of $Y$ by $\tau$ whose coordinate $X_0$ satisfies $X_0=0_\AA$. Then we would have $\L(\tau X)=\L(Y)=\mu$, but $\L(X)\neq \mu$ since $\AA$ is not the trivial group.

We now have all the tools to modify the the end of the proof of Theorem~\ref{thm:ftm_not_dyn_standard}, so that it can still be valid in the infinite case.

\begin{proof}[Alternative proof of Theorem~\ref{thm:ftm_not_dyn_standard}]
 We consider the same situation as in the first proof, but we just modify the final argument. 
 We know that $Z_{n_0}$ has law $\mu$, and $\tau^{|n_0|}Z_0=Z_{n_0}$. All processes are stationary, so we can apply Lemma~\ref{lemma:reciprocal} $|n_0|$ times to prove recursively that each $Z_n$, $n_0\le n\le 0$, has law $\mu$.  Observe also that we can choose the metric $d$ on $\AA$ to be invariant by translation (if we start with an arbitrary metric $d'$ defining the topology on $\AA$, we can set $d(a,b):=d'(a-b,b-a)$). Then, for each $\delta>0$, denoting by $B_\delta$  the ball of center $0_\AA$ and with radius $\delta$, we get 
\[ \PP\bigl(d\bigl(X'_0(0),X''_0(0)\bigr)<\delta\bigr)= \PP\bigl(Z_0(0)\in B_\delta\bigr)=\U_\AA\bigl(B_\delta\bigr). \]
But, as $\delta\to0$, $\U_\AA\bigl(B_\delta\bigr)$ converges to $\U_\AA(\{0_\AA\})$, and we have $\U_\AA(\{0_\AA\})<1$ since $\AA$ is not the trivial group. This shows that the target $X_0(0)$ is not dynamically I-cosy.
\end{proof}

\subsection{The factor filtration $(\fte,\sigma)$}
\label{sec:fte_dyn_standard}
We come back now to the case of a finite Abelian group, and we wish to study the dynamical standardness of the factor filtration $(\fte,\sigma)$, where $\fte$ is the filtration associated to the probabilistic cellular automaton $\te$. This section is essentially devoted to the proof of the following result.

\begin{theo}
 \label{thm:fte_dyn_standard}
 There exists $0<\eps_0<\frac{\cA-1}{\cA}$ such that, for each $\eps_0<\eps\le\frac{\cA-1}{\cA}$,  $(\fte,\sigma)$ is dynamically standard. 
\end{theo}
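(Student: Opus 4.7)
The plan is to realize the stationary Markov chain $(X_n)_{n\le 0}$ associated to $\te$ on a larger shift-invariant space as an almost sure measurable function of an i.i.d., shift-equivariant family of driving uniforms, via a coupling-from-the-past argument. This will exhibit $(\fte,\sigma)$ as dynamically immersed in a factor filtration dynamically of product type, and hence dynamically standard, once $\eps$ is close enough to $\frac{\cA-1}{\cA}$.

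On a dynamical system $(\Omega^U,\PP^U,T)$, introduce an i.i.d.\ family $\bigl(U_n(i)\bigr)_{n\le 0,\,i\in\ZZ}$ of uniform variables on $[0,1]$, arranged so that for each $n$, $U_n=\bigl(U_n(i)\bigr)_i$ is a $T$-process. The filtration $\G_n:=\tribu(U_m:m\le n)$ is then dynamically of product type. Fix a map $\phi:[0,\epst)\to\AA$ sending the normalized uniform measure on $[0,\epst)$ to $\U_\AA$, and try the local, shift-equivariant update rule
\begin{equation*}
  X_n(i) :=
  \begin{cases}
    \phi\bigl(U_n(i)\bigr) & \text{if } U_n(i)\in[0,\epst),\\
    \tau X_{n-1}(i) & \text{if } U_n(i)\in[\epst,1].
  \end{cases}
\end{equation*}
A direct computation shows that conditionally on $X_{n-1}$, the innovation $\xi_n(i):=X_n(i)-\tau X_{n-1}(i)$ has the prescribed law~\eqref{eq:law_of_xi} and is independent across $i$, so this formula genuinely realizes the Markov dynamics of $\te$.

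For two copies $X',X''$ driven by the same noise from initial conditions chosen at time $n_0$, the difference $Z_n:=X'_n-X''_n$ is forced to $\zA$ whenever $U_n(i)\in[0,\epst)$, and otherwise satisfies $Z_n(i)=\tau Z_{n-1}(i)=Z_{n-1}(i)+Z_{n-1}(i+1)$. A union bound over the $2^{|n_0|}$ branching paths in the backward light cone from $(0,0)$ gives
\begin{equation*}
  \PP\bigl(Z_0(0)\ne\zA\bigr)\le \bigl(2(1-\epst)\bigr)^{|n_0|},
\end{equation*}
which is summable in $|n_0|$ precisely when $\epst>\tfrac{1}{2}$, i.e., when $\eps>\tfrac{\cA-1}{2\cA}$. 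Taking $\eps_0:=\tfrac{\cA-1}{2\cA}$ and applying the Borel--Cantelli lemma, almost surely $X_0(0)$ depends on only finitely many $U_m(j)$'s, independently of the initial data at time $n_0$; the same holds for each $X_n(i)$ by shift-equivariance of the rule.

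Once this is in place, define $X_n$ on $\Omega^U$ via this coupling-from-the-past limit. Each $X_n$ is a $T$-process, and by uniqueness of the $\te$-invariant measure (Corollary~\ref{cor:te_ergodic}) the joint law of $(X_n)_{n\le 0}$ is that of the stationary Markov chain. The filtration $\F^X_n:=\tribu(X_m:m\le n)$ satisfies $\F^X_n\subset\G_n$, and combining the Markov property with the independence of $U_{n+1}$ from $\G_n$ gives $\F^X_{n+1}\perpc{\F^X_n}\G_n$, so $\F^X$ is immersed in $\G$. Since $(\F^X,T)\sim(\fte,\sigma)$ and $(\G,T)$ is dynamically of product type, $(\fte,\sigma)$ is dynamically immersible in a dynamical product-type factor filtration, hence dynamically standard. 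The main technical hurdle we anticipate is a clean, measurable construction of the coupling-from-the-past limit on a single probability space, uniformly in the initial conditions at time $n_0$: the summable tail bound gives the right ingredients, but it must be threaded through the countable collection of sites $(n,i)$ to yield a genuine pointwise limit defining the whole process, together with its correct shift-equivariance and stationary law.
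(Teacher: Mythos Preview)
Your proof is correct and follows the same overall architecture as the paper's: build a shift-equivariant coupling-from-the-past on a field of i.i.d.\ uniforms, show that the backward dependence set of each site $(n,i)$ is almost surely finite, and conclude that $\fte$ is immersed in the dynamically product-type factor filtration $\G$ generated by the uniforms. The difference lies in how the finiteness of the backward dependence set is obtained. The paper formalizes this set via an \emph{envelope automaton} on the enlarged alphabet $\AA\cup\{\qm\}$ and identifies ``$X_n(i)$ still depends on the initial condition at time $n_0$'' with the event that $(n,i)$ is connected to level $n_0$ in an oriented site percolation of parameter $p=1-\epst$; it then invokes Liggett's critical threshold $p_c\in[2/3,3/4]$ to conclude whenever $1-\epst<p_c$, yielding $\eps_0=(1-p_c)\tfrac{\cA-1}{\cA}\le\tfrac{\cA-1}{3\cA}$. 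Your union bound over the $2^{|n_0|}$ backward paths is precisely the first-moment estimate for that same percolation event, detecting subcriticality only when $2(1-\epst)<1$, hence $\eps_0=\tfrac{\cA-1}{2\cA}$. You thus trade a sharper threshold for a completely self-contained argument that avoids both the envelope formalism and the percolation literature; since the theorem only asks for \emph{some} $\eps_0$, this is a legitimate simplification. The ``technical hurdle'' you flag at the end is handled in the paper by defining $N_0(n,i)$ as the largest $n_0\le n$ for which no percolation path reaches $(n,i)$ from level $n_0$, and setting $X_n(i):=X_n^{N_0(n,i),\text{env}}(i)$; your summable tail bound $(2(1-\epst))^{|n_0|}$ gives the a.s.\ finiteness of the analogous random depth directly, and shift-equivariance of the rule then carries the construction to all sites simultaneously, exactly as you outline.
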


For this we will construct the filtration $\fte$ on a bigger space than $(\AA^\ZZ)^\ZZ$, namely we will consider the probability space $\Omega:=[0,1]^{\ZZ\times\ZZ}$ equipped with the product probability measure $\PP:=\lambda^{\otimes (\ZZ\times\ZZ)}$ ($\lambda$ denotes the Lebesgue measure on $[0,1]$ here). On $(\Omega,\PP)$ we define the measure preserving $\ZZ^2$-action of shift maps:
for $m,j\in\ZZ$, we define 
\[ \sigma_{m,j}:\omega=(\omega_{n,i})_{n\in\ZZ,i\in\ZZ}\mapsto \sigma_{m,j}\omega:=(\omega_{n+m,i+j})_{n\in\ZZ,i\in\ZZ}, \]
and we simply denote by $\sigma$ the usual left-shift: $\sigma:=\sigma_{0,1}$.
For each $n\in\ZZ$, $i\in\ZZ$, we set $U_n(i)(\omega):=\omega_{n,i}$, so that $\bigl(U_n(i)\bigr)_{n\in\ZZ,i\in\ZZ}$ is a family of independent variables, uniformly distributed in $[0,1]$. For each $n\in\ZZ$, we denote by $U_n$ the $\sigma$-process $U_n:=\bigl(U_n(i)\bigr)_{i\in\ZZ}$. Let $\G=(\G_n)_{n\leq0}$ be the filtration generated by $(U_n)_{n\leq0}$: then $(\G,\sigma)$ is a factor filtration, and as the random variables $U_n$, $n\in\ZZ$, are independent,  $(\G,\sigma)$ is dynamically of product type. 

Our purpose is to construct, in the dynamical system $(\Omega,\PP,\sigma)$, a stationary Markov process $(X_n)_{n\in\ZZ}$ with transitions given by the Markov kernel $\te$. We will still denote by $\fte$ the natural filtration of the negative-time part of this process, and we want to make it in such a way that $(\fte,\sigma)$ be a factor filtration immersed in $(\G,\sigma)$. More precisely, we want the following five conditions to be realized:
\begin{itemize}
\label{p:conditions}
 \item[(a)] $X_n=\bigl(X_n(i)\bigr)_{i\in\ZZ}=\bigl(X_n(0)\circ\sigma^i\bigr)_{i\in\ZZ}$ is a $\sigma$-process,
 \item[(b)] $\L(X_n)=\mu$,
 \item[(c)] for each $x\in\AA^\ZZ$, $\L(X_{n+1}|X_n=x)=\te(x,\cdot)$,
 \item[(d)] $X_n$ is $\G_n$-measurable, namely, is a measurable function of $(U_m)_{m\le n}$,
 \item[(e)] $X_{n+1}$ is measurable with respect to $\tribu(X_n,U_{n+1})$.
\end{itemize}
Conditions (a), (b) and (c) ensure that $(X_n)_{n\le0}$ generates the correct factor filtration $(\fte,\sigma)$ that we want to study.
Condition (d) shows that $\fte$ is a sub-filtration of $\G$, and Condition (e) implies that $\fte$ is immersed in $\G$.

The key condition~(d) is the most difficult to obtain. Our strategy will be strongly inspired by clever techniques introduced by Marcovici~\cite{MarcoArticle}  to prove ergodicity in the Markov sense for some probabilistic cellular automata. Let us start by sketching the argument. It relies on the following fact, which is a straightforward consequence of~\eqref{eq:law_of_xi}:
for each $n,i\in\ZZ$, the conditional distribution of $X_{n+1}(i)$ always satisfies
\begin{equation}
\label{eq:unif_minoration}
 \L\bigl(X_{n+1}(i) \,|\, \fte[n]\bigr) = \L\bigl(X_{n+1}(i) \,|\, X_n(i),X_n(i+1)\bigr) \ge \epst\, \U_\AA.
\end{equation}
Therefore, we are able to parametrize our Markov chain with the uniform random variables $U_n(i)$, through a recursion relation of the form
\[
 X_{n+1}(i)=\phiep\bigl(X_n(i),X_n(i+1),U_{n+1}(i)\bigr),
\]
where the updating function $\phiep(\cdot,\cdot,u)$ only depends on $u$ provided $u\ge 1-\epst$: namely we split the interval $[1-\epst,1]$ into $\cA$ subintervals $I_a$ with length $\epst/\cA$ each, and set for each $a,b,c\in\AA$ and $u\in I_a$
\[
 \phiep(b,c,u):=a.
\]
(The complete construction of the updating function is given in Section~\ref{sec:updating}.)
In this way, the knowledge of the process $U$ is sufficient to recover the values of the random variables $X_n(i)$ at some good sites $(n,i)$, directly if $U_n(i)>\epst$, or indirectly through the recursion relation. Actually, these good sites are exactly those which are not connected to $-\infty$ in an oriented site percolation process introduced in the next section. We will see that, if $\epst$ is not too small, then with probability one there is no site connected to $-\infty$, hence all sites are good and we get Condition~(d). 

The whole construction of the process also borrows from~\cite{MarcoArticle} the concept of \emph{envelope PCA}. It is an auxiliary probabilistic cellular automaton, presented in Section~\ref{sec:enveloppePCA}, which will be coupled with the percolation process in Section~\ref{sec:coupling_with_perco}.

\subsubsection{Oriented site percolation} 
\label{sec:perco}
The percolation model that we will use is built on an infinite oriented graph whose vertices (sites) are the elements of $\ZZ^2$, and the oriented edges connect each site $(n,i)$ to the sites $(n+1,i)$ and $(n+1,i-1)$. We fix $p\in[0,1]$ (this parameter will be specified later), and we use the random variables $U_n(i)$ to define a random configuration of the graph: the site $(n,i)$ is  declared to be \emph{open} if $U_n(i)\in[0,p[$, and \emph{closed} otherwise. In this random configuration, we say that a site $(n,i)$ \emph{leads to} a site $(m,j)$ if $n\le m$ and there exists in the oriented graph a path starting from $(n,i)$, ending in $(m,j)$ and passing only through open sites, which in particular requires that both sites $(n,i)$ and $(m,j)$ be open (see Figure~\ref{fig:perco}).

\begin{figure}
 \includegraphics[width=120mm]{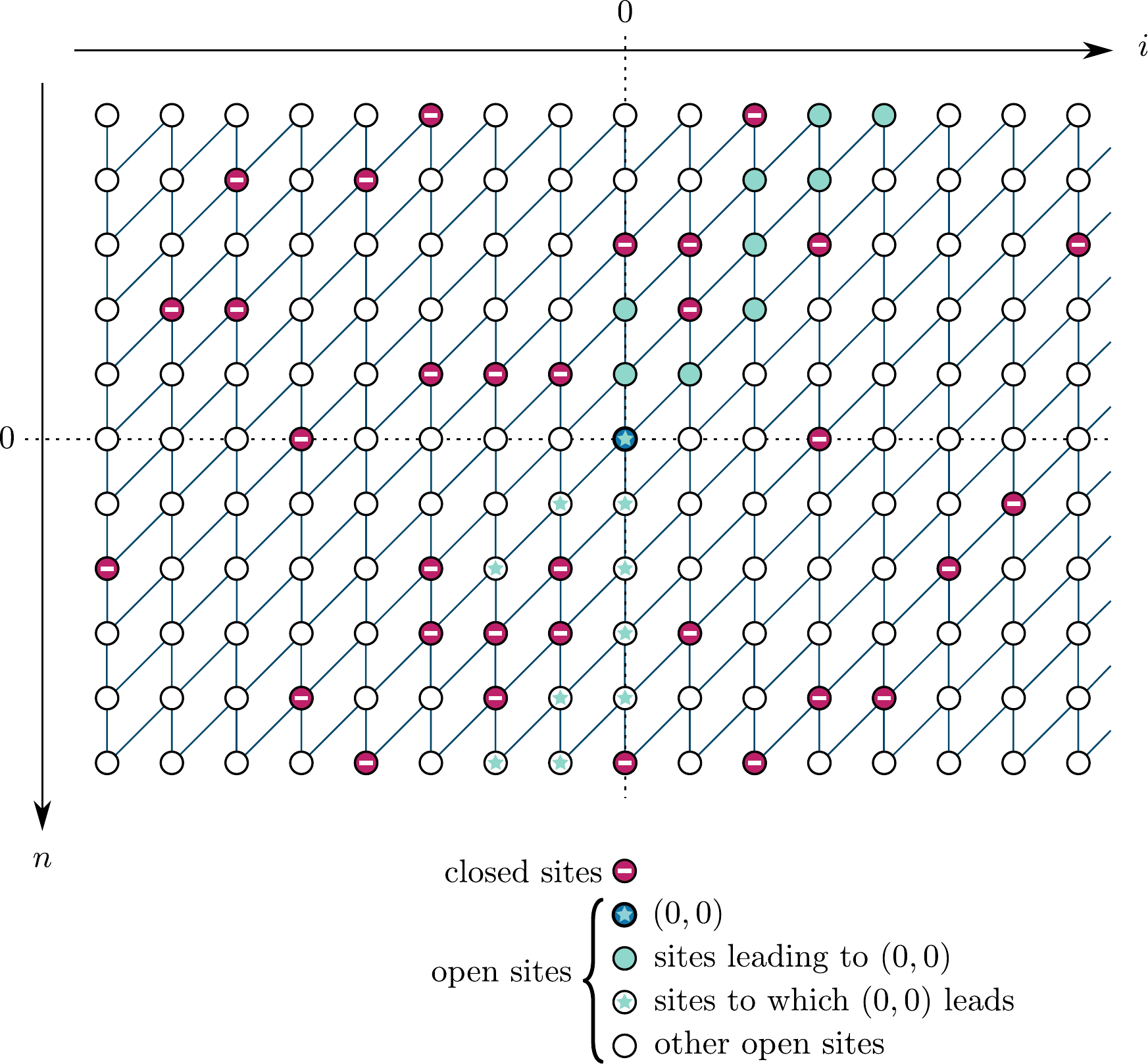}
\caption{The oriented site percolation model}
\label{fig:perco}
\end{figure} 

For $n\geq0$, we denote by $O_{n}$ the event ``there exists $i\in\ZZ$ such that $(0,0)$ leads to $(n,i)$'', and similarly, when $n\leq0$, $O_n$ stands for the events   ``there exists $i\in\ZZ$ such that $(n,i)$ leads to $(0,0)$''.  We also set 
\[
 O_\infty:= \bigcap_{n\geq0} O_n,\quad \text{and}\quad O_{-\infty}:= \bigcap_{n\leq0}O_n.
\]
We note that $\PP(O_n)=\PP(O_{-n})$ since exchanging each random variable $U_{n}(i)$ with $U_{-n}(-i)$ does not affect the law of the process, but exchanges $O_n$ and $O_{-n}$. It follows that $\PP(O_\infty)=\PP(O_{-\infty})$.

The essential question in percolation theory asks whether the probability of $O_\infty$ is positive or not. In the model presented here, this question has been addressed by Liggett~\cite{Liggett}, who proved the following:

\begin{theo}[Liggett]
\label{thm:perco_critique}
	There exists a critical value $p_c \in[\frac{2}{3},\frac{3}{4}]$ such that 
	\begin{itemize}
	 \item $p<p_c\Longrightarrow\PP(O_\infty)=0$,
	 \item $p>p_c\Longrightarrow\PP(O_\infty)>0$.
	\end{itemize}
\end{theo}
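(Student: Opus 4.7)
The plan is to prove the theorem in three stages: first establish monotonicity of the percolation probability in order to define $p_c$, then prove each of the two numerical bounds by a separate argument. Using the coupling provided by the uniform random variables $U_n(i)$, the configuration obtained by declaring $(n,i)$ open exactly when $U_n(i)<p$ is monotone in $p$, so that the increasing event $O_\infty$ has non-decreasing probability. Thus $\theta(p):=\PP(O_\infty)$ (as a function of $p$) is non-decreasing, and I set $p_c:=\sup\{p\in[0,1]:\theta(p)=0\}$, which by construction satisfies the two implications in the statement. It then remains to prove $p_c\ge 2/3$ and $p_c\le 3/4$.

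For the lower bound $p_c\ge 2/3$, I would start from a first-moment estimate. An oriented path from $(0,0)$ down to level $n$ consists of $n+1$ sites, and the number of such paths is exactly $2^n$ since at each step there are two choices of child. The event $O_n$ is contained in the union over these paths of the event that all sites of the path are open, so $\PP(O_n)\le 2^n p^{n+1}$, which already yields $p_c\ge 1/2$. To push the bound to $2/3$, I would count not single paths but \emph{pairs} of oriented paths from $(0,0)$ to level $n$ and apply the van den Berg--Kesten inequality to bound the probability of disjoint occurrences of two such open paths; a careful choice of generating function, exploiting the geometry of how oriented paths can meet and separate, yields exponential decay of $\PP(O_n)$ whenever $p<2/3$.

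For the upper bound $p_c\le 3/4$, I would use a Peierls-type contour argument adapted to this oriented setting. On the event that $(0,0)$ is open but $O_\infty$ fails, the set of open descendants of $(0,0)$ is finite, and its outer boundary — the set of closed sites one step below an open descendant — forms a one-dimensional "barrier" of closed sites separating the forward cone of $(0,0)$ from infinity. Enumerating such barriers by their combinatorial length, and using the geometric constraints of the oriented lattice, one bounds the number of admissible barriers of length $\ell$ around the origin by $c^\ell$ for an explicit constant $c$, while each barrier is entirely closed with probability $(1-p)^\ell$. A Borel--Cantelli-style summation then gives $\theta(p)>0$ as soon as $c(1-p)<1$, and a careful bookkeeping of $c$ brings the threshold down to $3/4$.

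The main obstacle in either direction is obtaining the \emph{sharp} constants $2/3$ and $3/4$ rather than the crude ones ($1/2$ and something close to $1$) produced by the naive single-path and single-barrier estimates. In both cases the improvement rests on a careful combinatorial accounting of the correlations between overlapping open paths (for the lower bound) and between admissible barriers (for the upper bound); this delicate combinatorics is in fact the true content of Liggett's original argument, and I would defer to~\cite{Liggett} for the fine enumerative constants rather than attempt to reproduce them here.
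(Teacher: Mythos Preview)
The paper does not prove this theorem: it is stated with attribution to Liggett and the citation~\cite{Liggett}, and is used as a black box to obtain Corollary~\ref{cor:perco}. So there is no proof in the paper to compare your proposal against; in that sense your write-up is already more than what the paper offers, and your final paragraph (deferring the sharp constants to~\cite{Liggett}) is essentially what the paper does in one line.

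That said, parts of your sketch do not hold together as written. The monotone-coupling definition of $p_c$ is fine, and the crude first-moment bound $\PP(O_n)\le 2^n p^{n+1}$ giving $p_c\ge 1/2$ is correct. But the step ``count pairs of paths and apply the van den Berg--Kesten inequality'' does not lead where you want it to: BK gives an \emph{upper} bound on the probability that two increasing events occur disjointly, i.e.\ $\PP(A\circ B)\le\PP(A)\PP(B)$. That is useful for \emph{lower}-bounding survival probabilities via second-moment methods (Paley--Zygmund together with a BK-type control on the second moment of the number of open paths), not for sharpening an \emph{upper} bound on $\PP(O_n)$. Improving $1/2$ to $2/3$ requires a genuinely different idea --- in Liggett's argument, a comparison with an auxiliary one-dimensional growth process and an associated recursion, not BK on path pairs. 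Similarly, the contour argument you describe for the upper bound is the right genre, but in the oriented setting the ``barriers'' are dual oriented paths rather than closed loops, and the constant $c$ one extracts from naive enumeration does not reach $3/4$ without the more careful block/renormalization-style reasoning in~\cite{Liggett}.

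In short: your overall architecture (monotonicity, then two separate bounds) is standard and correct, but the specific mechanisms you name for the two sharp constants are either misapplied (BK for the lower bound) or insufficient as stated (naive contour counting for the upper bound). Since you end by deferring to~\cite{Liggett} anyway, the honest version of your proposal is simply to cite the result, as the paper does.
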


 \begin{corollary}
  \label{cor:perco}
  If $0\le p<p_c$, the integer-valued random variable
  \[
    N_0:=\min\{n\le0:O_n\text{ does not hold}\}
  \]
  is almost surely well defined, and it is measurable with respect to $\tribu\bigl(U_n,n\le0)$.
 \end{corollary}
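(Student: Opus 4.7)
The plan is to combine Liggett's theorem (Theorem~\ref{thm:perco_critique}) with the monotonicity and locality properties of the events $(O_n)_{n\le0}$. The first ingredient is the monotonicity $O_{n-1}\subset O_n$ for $n\le 0$: any open oriented path from a site $(n-1,i)$ to $(0,0)$ passes through either $(n,i)$ or $(n,i-1)$ at its second vertex, which produces an open path from a site at time $n$ to $(0,0)$. Thus $(O_n)_{n\le 0}$ is a decreasing sequence of events as $n\to-\infty$, and the set $\{n\le 0:O_n\text{ fails}\}$, whenever nonempty, is of the form $\{n\le M\}$ for some $M\le 0$; this threshold $M$---the first failure encountered when scanning from time $0$ into the past---is what plays the role of $N_0$ in the statement.

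Next I would establish almost-sure well-definedness. Since $p<p_c$, Theorem~\ref{thm:perco_critique} gives $\PP(O_\infty)=0$, and the symmetry $(n,i)\leftrightarrow(-n,-i)$ recalled in the text yields $\PP(O_{-\infty})=\PP(O_\infty)=0$. Applying monotone continuity of $\PP$ to the decreasing sequence $(O_n)_{n\le 0}$, one gets $\PP(O_n)\downarrow\PP(O_{-\infty})=0$ as $n\to-\infty$, so almost surely some $O_n$ fails and $N_0$ is a well-defined nonpositive integer.

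For the measurability claim, observe that any oriented path from a site at time $n\le 0$ to $(0,0)$ stays inside the strip $\{n,n+1,\ldots,0\}\times\ZZ$, hence $O_n\in\tribu\bigl(U_m(i):n\le m\le 0,\ i\in\ZZ\bigr)\subset\tribu(U_m:m\le 0)$. Each event $\{N_0=n\}=O_{n+1}\cap O_n^c$ (with the convention $O_1:=\Omega$) therefore lies in $\tribu(U_m:m\le 0)$, which gives the required measurability. I do not expect any real obstacle; the corollary is essentially an unpacking of Liggett's theorem together with the deterministic structure of the percolation graph.
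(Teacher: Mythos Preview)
The paper states this corollary without proof, so there is no argument to compare against; your reasoning correctly supplies one. One point deserves to be made explicit, however. As you establish, $O_{n-1}\subset O_n$ for $n\le 0$, so the set $\{n\le 0:O_n\text{ fails}\}$ is downward-closed and its literal \emph{minimum} is $-\infty$ whenever the set is nonempty. The quantity that is actually well-defined---and the one the paper uses later (see the proof of Proposition~\ref{prop:convergence}, where $N_0(n,i)$ is described as ``the greatest integer $n_0\le n$ such that there is no $j\in\ZZ$ with $(n_0,j)$ leading to $(n,i)$'')---is the \emph{maximum} of this set, namely your threshold $M$. Your identification $\{N_0=n\}=O_{n+1}\cap O_n^c$ is already written for this intended reading, and with it the almost-sure finiteness (via $\PP(O_{-\infty})=\PP(O_\infty)=0$ and monotone convergence) and the $\tribu(U_m:m\le 0)$-measurability go through exactly as you say. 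It would strengthen the write-up to state explicitly that the $\min$ in the corollary should read $\max$, rather than silently correcting it.
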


\subsubsection{Envelope automaton} 
\label{sec:enveloppePCA}

The envelope automaton associated to $\te$ is an auxiliary probabilistic cellular automaton acting on configurations built on a larger alphabet: 
$\AAe:=\AA\cup\{\qm \}$. We associate to each finite word $w\in(\AAe)^*$ the set $\overline{w}$ of all words in $\AA^*$ obtained from $w$ by replacing each question mark with any symbol from $\AA$.
For example, if $\AA=\{0,1\}$ and $w=\qm 1\qm $, we have $\overline{w}=\{010,011,110,111\}$. Of course, if $w\in\AA^*$, we have $\overline{w}=\{w\}$.

To define the envelope automaton we first have to introduce the \emph{local transition rules} of $\te$: if $(X_n)$ is a Markov chain evolving according the Markov kernel $\te$, we define for $a,b,c\in\AA$
\begin{align*}
f_\te (a \,|\, bc) &:= \PP \bigl(X_{n+1}(i)=a \,|\, X_n(i)=b,X_n(i+1)=c\bigr) \\
 & = \begin{cases}
      1-\eps &\text{ if }a=b+c\\
      \frac{\eps}{\cA-1} &\text{ otherwise. }
     \end{cases}
\end{align*}
Now we define the local transition rules of the envelope PCA: if $w$ is a word of length 2 over the alphabet $\AAe$, and $a\in\AA$, we set
\[ \fenv(a \,|\, w) := \min_{bc\in\overline{w}} f_\te (a \,|\, bc), \]
and 
\[ \fenv(\qm  \,|\, w) := 1-\sum_{a\in\AA}\min_{bc\in\overline{w}} f_\te (a \,|\, bc). \]
Note that, if $w$ contains at least one $\qm $, for each $a\in\AA$ there exists $bc\in\overline{w}$ such that $b+c\neq a$. Recalling the assumption~\eqref{eq:epsilon} we made on $\eps$ and the definition~\eqref{eq:epst} of $\epst$, we thus get the following rules: for each $a,b,c\in\AAe$,
\[ 
 \fenv(a \,|\, bc) = \begin{cases}
                      f_\te(a\,|\,bc)    &\text{ if }a,b,c\in\AA,\\
                      0                  &\text{ if }a=\qm \text{ and }b,c\in\AA,\\
                      \frac{\eps}{\cA-1} &\text{ if }a\in\AA\text{ and }\qm \in\{b,c\},\\
                      1-\epst              &\text{ if }a=\qm \text{ and }\qm \in\{b,c\}.\\
                     \end{cases}
\]
In particular, the symbol $\qm $ cannot appear if the two preceding symbols are in $\AA$, and in this case the local transition rules of the envelope PCA coincides with those of $\te$. 

For each $b,c\in\AAe$, $\fenv(\cdot \,|\, bc)$ can be viewed as a probability measure on $\AAe$. Then the envelope PCA $\teenv$ is defined as the Markov kernel on $\AAe^\ZZ$ given by
\[ 
 \forall x\in\AAe^\ZZ,\ \teenv(\cdot\,|\,x):=\bigotimes_{i\in\ZZ}\fenv\bigl(\cdot \,|\, x(i) x(i+1)\bigr).
\]

\subsubsection{Simultaneous implementation of the two PCA's}
\label{sec:updating}
We are going to realize a coupling of $\te$ and $\teenv$ on our space $\Omega$, using the independent uniform random variables $\bigl(U_n(i)\bigr)_{n,i\in \ZZ}$. This will be done by means of two updating functions 
\[ \phiep:\AA\times\AA\times [0,1]\to\AA\]
and 
\[ \phiepenv:\AAe\times\AAe\times [0,1]\to\AAe,\]
adapted respectively to the local transition rules of $\te$ and $\teenv$. Here is how we define these updating functions (see Figure~\ref{fig:updating}): we split the interval $[0,1]$  into  $\cA+1$ disjoint sub-intervals, labelled by symbols from $\AAe$. The first one is
\[ I_\qm  := \left[0,1-\frac{\eps\cA}{\cA-1} \right[ = \left[0,1-\epst \right[, \]
and the complement of $I_\qm $ is itself split into $\cA$ subintervals of equal length $\frac{\eps}{\cA-1}$, denoted by  $I_a$, $a\in\AA$.
Now for each $b,c\in\AA$, $u\in[0,1]$, we set
\[ \phiep(b,c,u):= \begin{cases}
                                b+c &\text{ if } u \in I_\qm , \\
                                a &\text{ if } u \in I_a,\ a\in\AA.
                               \end{cases}
\]
In this way, for a random variable $U$ uniformly distributed in $[0,1]$, we have for each $b,c\in\AA$
\[ \L\bigl( \phiep(b,c,U) \bigr) = f_\te(\cdot\,|\,bc). \] 

%
%

To define $\phiepenv$, we use the same partition of $[0,1]$ and we set for $b,c\in\AAe$ and $u\in[0,1]$
\[ \phiepenv(b,c,u):= \begin{cases}
                                b+c &\text{ if } b,c\in\AA \text{ and } u \in I_\qm , \\
                                \qm  &\text{ if } b \text{ or }c\text{ is }\qm  \text{ and }u \in I_\qm , \\
                                a &\text{ if } u \in I_a,\ a\in\AA.
                               \end{cases}
\]
Likewise, we see that  for a random variable $U$ uniformly distributed in $[0,1]$, we have for each $b,c\in\AAe$
\[ \L\bigl( \phiepenv(b,c,U) \bigr) = \fenv(\cdot\,|\,bc). \] 

Let $n_0\le0$ be a fixed integer and $x\in\AA^\ZZ$ be an arbitrary configuration . We realize simultaneously two Markov chains $(X_n^{x})_{n\ge n_0}$ and $\Xenv=(\Xenv_n)_{n\ge n_0}$ evolving respectively according to $\te$ and $\teenv$: first we initialize by setting 
$X^{x}_{n_0} := x$, and $\Xenv_{n_0}:= \ldots \qm \qm \qm \qm \qm \ldots$ (the configuration with only symbols $\qm $). 
Later times of the processes are computed recursively through the formula
\begin{equation}
 \label{eq:rec_te}
 X^{x}_{n+1}(i):= \phiep\bigl(X^{x}_n(i),X^{x}_n(i+1),U_{n+1}(i)\bigr),
\end{equation}
and
\begin{equation}
 \label{eq:rec_env} 
 \Xenv_{n+1}(i):= \phiepenv\bigl(\Xenv_n(i),\Xenv_n(i+1),U_{n+1}(i)\bigr).
\end{equation}

\begin{figure}
 \includegraphics[width=125mm]{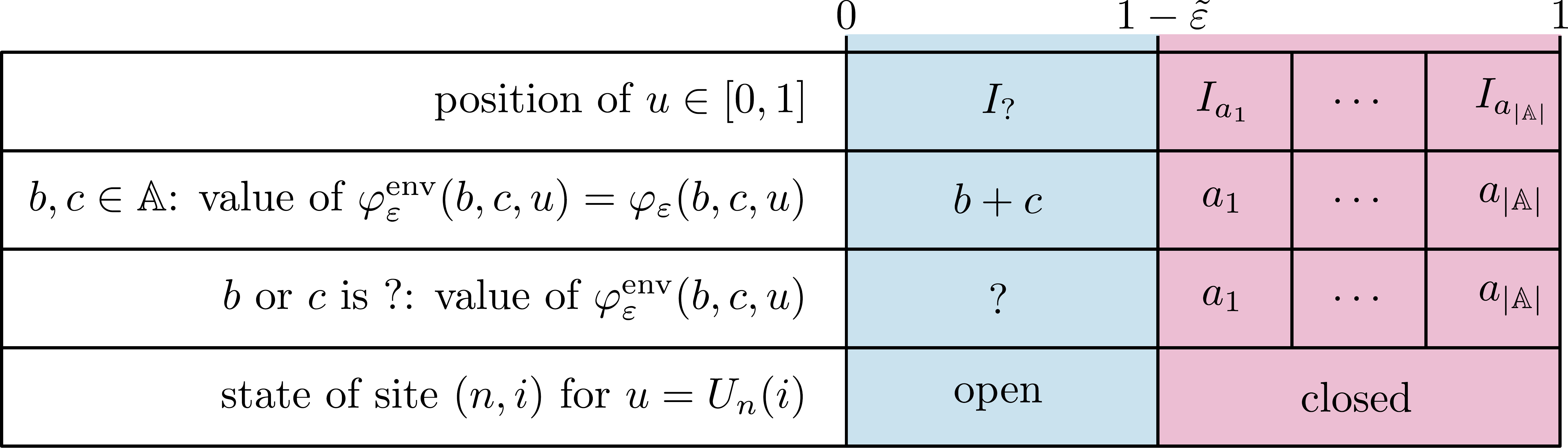}
\caption{Values of the updating functions and coupling with the percolation model}
\label{fig:updating}
\end{figure} 

\subsubsection{Coupling with the percolation model} 
\label{sec:coupling_with_perco}
We come back now to the oriented site percolation model we have defined in Section~\ref{sec:perco} from the \emph{same} random variables $\bigl(U_n(i)\bigr)_{n,i\in \ZZ}$. We just set the parameter $p$ to be equal to $1-\epst$. In this way, the site $(n,i)$ is open if and only if $U_n(i)\in I_\qm $ (see Figure~\ref{fig:updating}). 

The way we constructed the three processes together yields interesting properties which we describe in the two following lemmas. 

\begin{lemma}
\label{lemma:couplage_perco}
	For each $n\ge n_0+1$, $i\in\ZZ$,  we have $\Xenv_n(i)=\qm $ if and only if in the percolation model there exists $j\in\ZZ$ such that $(n_0+1,j)$ leads to $(n,i)$. 
\end{lemma}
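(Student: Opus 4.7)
The plan is to prove the equivalence by induction on $n\ge n_0+1$, exploiting the parallel structure between the envelope updating rule $\phiepenv$ and the percolation edges. The key observation to set up the induction is that the two predecessors of $(n+1,i)$ in the oriented graph (i.e.\ the sites from which an edge points to $(n+1,i)$) are exactly $(n,i)$ and $(n,i+1)$, which are precisely the two sites at time $n$ upon which the envelope updating~\eqref{eq:rec_env} makes $\Xenv_{n+1}(i)$ depend.

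For the base case $n=n_0+1$, I would just unfold one step: since $\Xenv_{n_0}$ is the constant configuration of question marks, we have
\[ \Xenv_{n_0+1}(i) = \phiepenv(\qm,\qm,U_{n_0+1}(i)), \]
which by the definition of $\phiepenv$ equals $\qm$ exactly when $U_{n_0+1}(i)\in I_\qm$, i.e.\ when $(n_0+1,i)$ is open. This matches the percolation condition because the only site $(n_0+1,j)$ that can lead to $(n_0+1,i)$ via a path in the oriented graph is $(n_0+1,i)$ itself (trivial path of length $0$), and such a trivial path uses open sites iff $(n_0+1,i)$ is open.

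For the inductive step, assume the statement holds for some $n\ge n_0+1$ and consider position $i$ at time $n+1$. I would split into two cases according to where $U_{n+1}(i)$ lies. If $U_{n+1}(i)\notin I_\qm$, then the updating function outputs an element of $\AA$, so $\Xenv_{n+1}(i)\ne\qm$; simultaneously, $(n+1,i)$ is closed, so no percolation path can end at $(n+1,i)$, and both sides of the equivalence are false. If $U_{n+1}(i)\in I_\qm$, then by the definition of $\phiepenv$ we have $\Xenv_{n+1}(i)=\qm$ iff at least one of $\Xenv_n(i)$ and $\Xenv_n(i+1)$ equals $\qm$. By the induction hypothesis, this happens iff there exists $j\in\ZZ$ and an open path from $(n_0+1,j)$ to either $(n,i)$ or $(n,i+1)$; and since $(n+1,i)$ is itself open and its only predecessors in the graph are precisely $(n,i)$ and $(n,i+1)$, this is equivalent to the existence of an open path from some $(n_0+1,j)$ to $(n+1,i)$. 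This closes the induction.

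There is no real obstacle beyond carefully matching the combinatorics of edges with the dependency pattern of $\phiepenv$; the main thing to keep straight is the orientation convention, namely that the predecessors of $(n+1,i)$ are $(n,i)$ and $(n,i+1)$ (obtained by reversing the edges $(n,j)\to(n+1,j)$ and $(n,j)\to(n+1,j-1)$ at $j=i$ and $j=i+1$ respectively), which exactly matches the two arguments of $\phiepenv$ in~\eqref{eq:rec_env}.
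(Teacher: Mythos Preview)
Your proof is correct and follows essentially the same approach as the paper: induction on $n\ge n_0+1$, with the base case reducing to openness of the single site $(n_0+1,i)$, and the inductive step matching the two inputs of $\phiepenv$ with the two predecessors $(n,i)$ and $(n,i+1)$ of $(n+1,i)$ in the oriented graph. Your organization of the inductive step (splitting first on whether $U_{n+1}(i)\in I_\qm$) is a minor cosmetic variant of the paper's single biconditional, and your explicit verification of the edge orientation is a helpful sanity check that the paper leaves implicit.
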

\begin{proof}
We prove the lemma by induction on $n\ge n_0+1$. Since $X_{n_0}^{\text{env}}(i)=\qm $, we have $\Xenv_{n_0+1}(i)=\qm $ if and only if $U_{n_0+1}(i)\in I_\qm $, that is if and only if the site $(n_0+1,i)$ is open, which proves the result for $n=n_0+1$. Now assume that the result is true up to level $n\ge n_0+1$. By construction, $\Xenv_{n+1}(i)=\qm $ if and only if the two following conditions hold
 \begin{itemize}
  \item $\Xenv_{n}(i)=\qm $ or $\Xenv_{n}(i+1)=\qm $, which by assumption means that there exists $j\in\ZZ$ such that $(n_0+1,j)$ leads to $(n,i)$ or to $(n,i+1)$;
  \item $U_{n+1}(i)\in I_\qm $, which means that the site $(n+1,i)$ is open.
 \end{itemize}
 Therefore $\Xenv_{n+1}(i)=\qm $ if and only if there exists $j\in\ZZ$ such that $(n_0+1,j)$ leads to $(n+1,i)$, which ends the proof.
\end{proof}

The next lemma is a key result: it shows that wherever the process $\Xenv$ displays symbols different from $\qm $, the process $X^x$ displays the same symbols as $\Xenv$, \emph{regardless of the initial configuration $x$}.

\begin{lemma}
\label{lemma:synchro}
	For each $n\ge n_0$, $i\in\ZZ$ and $x\in\AA^\ZZ$, if $\Xenv_{n}(i) \neq \qm$ then $X_n^{x}(i) = \Xenv_{n}(i)$. 
\end{lemma}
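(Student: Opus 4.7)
The plan is a straightforward induction on $n \ge n_0$, exploiting the fact that the updating functions $\phiep$ and $\phiepenv$ were designed to use the same partition of $[0,1]$ and to agree whenever they output a symbol in $\AA$.

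For the base case $n = n_0$, the configuration $\Xenv_{n_0}$ is identically $\qm$, so the hypothesis $\Xenv_{n_0}(i) \ne \qm$ is never satisfied and the implication holds vacuously for every $x$.

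For the inductive step, fix $n \ge n_0$ and assume the statement holds at time $n$. Fix $i \in \ZZ$ and $x \in \AA^\ZZ$, and suppose $\Xenv_{n+1}(i) \ne \qm$. I would split into two cases according to where $U_{n+1}(i)$ falls. If $U_{n+1}(i) \in I_a$ for some $a \in \AA$, then by the definition of $\phiepenv$ we get $\Xenv_{n+1}(i) = a$, and by the definition of $\phiep$ we also get $X^x_{n+1}(i) = a$, regardless of the values at time $n$. Otherwise $U_{n+1}(i) \in I_\qm$; then the condition $\Xenv_{n+1}(i) \ne \qm$ forces $\Xenv_n(i) \in \AA$ and $\Xenv_n(i+1) \in \AA$ (since $\phiepenv$ returns $\qm$ whenever at least one of the two arguments is $\qm$ and $u \in I_\qm$), and we have $\Xenv_{n+1}(i) = \Xenv_n(i) + \Xenv_n(i+1)$. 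By the inductive hypothesis applied at sites $i$ and $i+1$, $X^x_n(i) = \Xenv_n(i)$ and $X^x_n(i+1) = \Xenv_n(i+1)$; plugging into~\eqref{eq:rec_te} and using that $u \in I_\qm$ yields $X^x_{n+1}(i) = X^x_n(i) + X^x_n(i+1) = \Xenv_{n+1}(i)$.

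There is no real obstacle here: the whole work has been done in setting up $\phiep$ and $\phiepenv$ with a common partition of $[0,1]$ so that the values produced in the $\AA$-valued branches coincide, and the only ``creative'' branch ($u \in I_\qm$) forces propagation of $\qm$ from any $\qm$-ancestor. The induction just records this built-in compatibility.
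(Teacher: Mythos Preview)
Your proof is correct and follows essentially the same induction as the paper. The only cosmetic difference is that the paper organizes the inductive step by first splitting on whether $\Xenv_n(i)$ or $\Xenv_n(i+1)$ equals $\qm$, whereas you split on whether $U_{n+1}(i)$ lies in some $I_a$ or in $I_\qm$; the two decompositions cover the same sub-cases and the arguments are identical.
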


\begin{proof}
Again we prove the result by induction on $n\ge n_0$. For $n=n_0$ this is obvious since $\Xenv_{n_0}(i)=\qm $ for all $i\in\ZZ$. Now assume that the property holds up to some level $n\ge n_0$, and let us consider some site $(n+1,i)$. 
 There are two cases.  
 \begin{itemize}
  \item First case: $\Xenv_{n}(i)=\qm $ or $\Xenv_{n}(i+1)=\qm $. Then the only way to have $\Xenv_{n+1}(i) \neq \qm $ is that $U_{n+1}(i)\in I_a$ for some $a\in\AA$. By construction of the updating functions, this implies that $X^{x}_{n+1}(i)=a=\Xenv_{n+1}(i)$.
  \item Second case: $\Xenv_{n}(i)\in\AA$ and $\Xenv_{n}(i+1)\in\AA$. Then by assumption this implies that we have both $X^{x}_{n}(i)=\Xenv_{n}(i)$ and $X^{x}_{n}(i+1)=\Xenv_{n}(i+1)$. By construction of the updating functions, we deduce on the one hand that $\Xenv_{n+1}(i)\in\AA$, and on the other hand that $X^{x}_{n+1}(i)=\Xenv_{n+1}(i)$.
 \end{itemize}
 In both cases the property also holds for the site $(n+1,i)$.
\end{proof}

\subsubsection{Construction of the process $(X_n)$ generating $\fte$}
We are now going to let $n_0$ vary in the preceding construction, and to make the dependence on $n_0$ more explicit we now denote 
$(X^{n_0,x}_n)_{n\ge n_0}$ and $(X^{n_0, \text{env}}_n)_{n\ge n_0}$ the processes respectively initialized at the configuration $x$ and at the configuration $\ldots \qm \qm \qm \qm \ldots$, and defined inductively by~\eqref{eq:rec_te} and~\eqref{eq:rec_env}. 

Since the initial configuration of the envelope processes is $\sigma$-invariant,  the action of the shifts $\sigma_{n,i}$ on the envelope processes yields the following formula, valid for all $n_0\le n$, $i\in\ZZ$:
\begin{equation}
\label{eq:shift_env}
 X^{n_0, \text{env}}_n(i) = X^{n_0-n, \text{env}}_0(0)\circ\sigma_{n,i}.
\end{equation}

By a similar argument to the proof of Lemma~\eqref{lemma:synchro}, we get the following relations between all those processes.
\begin{lemma}
\label{lemma:synchrobis}
	For each $n\geq n_0$, $i\in\ZZ$, if $X^{n_0, \text{env}}_{n}(i) \neq \qm $, then for all $n_1\le n_0$ we have $X^{n_1, \text{env}}_{n}(i) = X^{n_0, \text{env}}_{n}(i)$.
\end{lemma}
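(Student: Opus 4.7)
The plan is to mimic the proof of Lemma~\ref{lemma:synchro} almost verbatim, but with $X^{n_1,\text{env}}$ playing the role of $X^{x}$. The crucial structural feature that makes this work is that the update rule $\phiepenv$ always produces an element of $\AA$ (rather than $\qm$) whenever $U_{n+1}(i)\in I_a$ for some $a\in\AA$, and in that case the output equals $a$ regardless of the two input symbols. So whenever a non-$\qm$ symbol is produced ``by the noise,'' it is produced identically in any envelope copy.

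Concretely, I would proceed by induction on $n\ge n_0$, fixing $n_1\le n_0$. The base case $n=n_0$ is vacuous, because $X^{n_0,\text{env}}_{n_0}(i)=\qm$ for every $i\in\ZZ$, so the hypothesis $X^{n_0,\text{env}}_{n}(i)\ne \qm$ is never met. For the inductive step, assume the claim holds at level $n$, and consider a site $(n+1,i)$ with $X^{n_0,\text{env}}_{n+1}(i)\ne \qm$. Using the recursion~\eqref{eq:rec_env}, split according to the value of $U_{n+1}(i)$.

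If $U_{n+1}(i)\in I_a$ for some $a\in\AA$, then by the definition of $\phiepenv$ we directly get $X^{n_0,\text{env}}_{n+1}(i)=a$ and $X^{n_1,\text{env}}_{n+1}(i)=a$, irrespective of what the two ``parent'' symbols are. If instead $U_{n+1}(i)\in I_\qm$, then the fact that $X^{n_0,\text{env}}_{n+1}(i)\ne \qm$ forces both parents $X^{n_0,\text{env}}_n(i)$ and $X^{n_0,\text{env}}_n(i+1)$ to lie in $\AA$, and $X^{n_0,\text{env}}_{n+1}(i)=X^{n_0,\text{env}}_n(i)+X^{n_0,\text{env}}_n(i+1)$. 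The induction hypothesis then yields $X^{n_1,\text{env}}_n(i)=X^{n_0,\text{env}}_n(i)$ and $X^{n_1,\text{env}}_n(i+1)=X^{n_0,\text{env}}_n(i+1)$, so applying $\phiepenv$ with $U_{n+1}(i)\in I_\qm$ to these parents (both in $\AA$) gives the same sum, i.e.\ $X^{n_1,\text{env}}_{n+1}(i)=X^{n_0,\text{env}}_{n+1}(i)$.

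I do not expect any real obstacle here: the argument is a direct transcription of the proof of Lemma~\ref{lemma:synchro}, since the only property of the ``less informed'' starting configuration that was used there was that it consisted entirely of $\qm$'s, and the ``more informed'' starting configuration $x\in\AA^\ZZ$ was arbitrary. Replacing the $\AA$-valued start by any envelope configuration (in particular by the all-$\qm$ configuration of $X^{n_1,\text{env}}$ at its starting time $n_1$, which has propagated to some arbitrary envelope configuration at time $n_0$) changes nothing in the case analysis, since $\phiepenv$ treats $\qm$ and $\AA$-symbols symmetrically with respect to the $I_a$ intervals and the only place where $\AA$-valuedness of the inputs is required is precisely the case we handle.
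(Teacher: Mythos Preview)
Your argument is correct and is exactly what the paper intends: it states that Lemma~\ref{lemma:synchrobis} follows ``by a similar argument to the proof of Lemma~\ref{lemma:synchro}'', and your induction with the two-case split on $U_{n+1}(i)\in I_a$ versus $U_{n+1}(i)\in I_\qm$ is precisely that similar argument spelled out in full.
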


The next proposition will be used to define the process $(X_n)_{n\in\ZZ}$, evolving according to $\te$, under the assumption that $\eps$ be not too close to 0. We recall $p_c$ stands for the critical value for the oriented site percolation process, defined in Theorem~\ref{thm:perco_critique}. 
\begin{prop}
 \label{prop:convergence}
 Assume that
\begin{equation}
 \label{eq:epsilon2}
 (1-p_c) \frac{\cA-1}{\cA} < \eps <\frac{\cA-1}{\cA},\quad \text{\textit{i.e.} }0<1-\epst<p_c. 
\end{equation} 
Then, with probability 1, for all $n,i\in\ZZ$,  there exists a random variable $N_0(n,i)\le n$, taking values in $\ZZ$, and satisfying the following properties:
\begin{itemize}
 \item $N_0(n,i)$ is measurable with respect to $\G_n=\tribu\bigl(U_m(j):\ m\le n,\ j\in\ZZ\bigr)$.
 \item For each integers $n_1\le n_0\le N_0(n,i)$, we have
 \[
  X_n^{n_1, \text{env}}(i) = X_n^{n_0, \text{env}}(i) \neq \qm.
 \]
 \item For each $n,i\in\ZZ$, $N_0(n,i)=N_0(0,0)\circ \sigma_{n,i}$.
\end{itemize}
\end{prop}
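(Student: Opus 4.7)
The strategy is to exploit the subcriticality hypothesis $1-\epst<p_c$ together with the percolation coupling from Lemma~\ref{lemma:couplage_perco}. First I would extend the construction of Corollary~\ref{cor:perco} to an arbitrary base point $(n,i)$: denote by $O_{n_0}(n,i)$ the event that some site at level $n_0$ leads to $(n,i)$ in the oriented site percolation. By shift-invariance of the i.i.d.\ family $\bigl(U_m(j)\bigr)_{m,j\in\ZZ}$, one has $\PP\bigl(\bigcap_{n_0\le n}O_{n_0}(n,i)\bigr)=\PP(O_{-\infty})=0$ thanks to Theorem~\ref{thm:perco_critique}. Hence almost surely the set of $n_0\le n$ for which no site at level $n_0+1$ leads to $(n,i)$ is a nonempty downward-closed subset of $\ZZ$, and I would define $N_0(n,i)$ to be its maximum. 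Because every event $O_{n_0}(n,i)$ depends only on the $U_m(j)$ with $m\le n$, the random variable $N_0(n,i)$ is automatically $\G_n$-measurable.

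Next, I would verify the equality in the second bullet. For any $n_0\le N_0(n,i)$, Lemma~\ref{lemma:couplage_perco} applied to the envelope process $\bigl(X^{n_0,\text{env}}_m\bigr)_{m\ge n_0}$ says that $X^{n_0,\text{env}}_n(i)=\qm$ iff some site at level $n_0+1$ leads to $(n,i)$; by the defining property of $N_0(n,i)$, this fails, so $X^{n_0,\text{env}}_n(i)\in\AA$. Then for $n_1\le n_0\le N_0(n,i)$, Lemma~\ref{lemma:synchrobis} applied at $(n,i)$ immediately gives $X^{n_1,\text{env}}_n(i)=X^{n_0,\text{env}}_n(i)$, since the latter lies in $\AA$.

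For the third bullet, I would unwind the action of the shift on the percolation graph. The map $\omega\mapsto\sigma_{n,i}\omega$ translates the configuration so that the cell $(m,k)$ in $\omega$ corresponds to the cell $(m-n,k-i)$ in $\sigma_{n,i}\omega$; in particular paths from $(n_0,j)$ to $(n,i)$ in $\omega$ are in bijection with paths from $(n_0-n,j-i)$ to $(0,0)$ in $\sigma_{n,i}\omega$. Feeding this into the definition of $N_0(n,i)$ yields the announced shift-equivariance relating $N_0(n,i)$ to $N_0(0,0)\circ\sigma_{n,i}$.

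The argument is essentially routine once Liggett's theorem and the percolation couplings are in hand; the only genuinely delicate point is bookkeeping, namely the off-by-one shift between the initial time $n_0$ of the envelope PCA (where every cell is $\qm$) and the first level $n_0+1$ at which question marks can disappear, which has to be threaded consistently through the definition of $N_0(n,i)$ and through the verification of its shift-covariance.
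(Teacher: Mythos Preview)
Your argument is essentially the paper's own: use subcriticality (Theorem~\ref{thm:perco_critique} and Corollary~\ref{cor:perco}) to define $N_0$, invoke Lemma~\ref{lemma:couplage_perco} to get $X_n^{n_0,\text{env}}(i)\neq\qm$ and Lemma~\ref{lemma:synchrobis} for the stability under $n_1\le n_0$, and deduce the third bullet from shift-equivariance of the i.i.d.\ field (the paper treats $(0,0)$ first and then sets $N_0(n,i):=N_0\circ\sigma_{n,i}$, but this is only a cosmetic reordering). The bookkeeping concern you flag is indeed the one place your write-up slips: as stated, your set of admissible $n_0\le n$ always contains $n_0=n$ (the condition ``no site at level $n+1$ leads to $(n,i)$'' is vacuous), which would force $N_0(n,i)=n$ and make the second bullet fail at $n_0=n$ since $X_n^{n,\text{env}}(i)=\qm$ by initialization; restricting to $n_0\le n-1$ repairs this.
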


\begin{proof}
Recall that the right-hand-side inequality, $\eps < \frac{\cA-1}{\cA}$, is just the the reformulation of condition~\eqref{eq:epsilon}.
The left-hand-side inequality, $(1-p_c) \frac{\cA-1}{\cA} < \eps$ is equivalent to $p<p_c$, where $p=1-\epst=1-\frac{ \eps\cA}{\cA-1}$ is the parameter of the percolation process constructed in the coupling. 
 
 Consider first the case $(n,i)=(0,0)$. For $p<p_c$, Corollary~\ref{cor:perco} ensures that the random variable $N_0=N_0(0,0)=\min\{m\le0: O_m\text{ does not hold}\}$ is almost surely well defined and $\G_0$-measurable.  Then Lemma~\ref{lemma:couplage_perco} shows that for each $n_0\le N_0(0,0)$, $X_0^{n_0, \text{env}}(0) \neq \qm $, and Lemma~\ref{lemma:synchrobis} gives 
 \[n_1\le n_0\le N_0(0,0) \Longrightarrow  X_0^{n_1, \text{env}}(0) = X_0^{n_0, \text{env}}(0).\] 
 
Then we can generalize this result to any site $(n,i)\in \ZZ\times\ZZ$, setting $N_0(n,i):=N_0\circ \sigma_{n,i}$. Note that $N_0(n,i)$ is the greatest integer $n_0\le n$ such that there is no $j\in\ZZ$ with $(n_0,j)$ leading to $(n,i)$.
 \end{proof}

We assume now that $\eps$ satisfies~\eqref{eq:epsilon2}, and we explain how to construct the process $(X_n)$ satisfying the conditions (a), (b), (c), (d) and (e) (see page~\pageref{p:conditions}).  With the above proposition, we can almost surely define, for each $n,i\in\ZZ$, 
\begin{equation}
 \label{eq:def_pcate}
 X_n(i) := X_n^{N_0(n,i), \text{env}} (i) = \lim_{n_0\to -\infty} X_n^{n_0, \text{env}} (i).
\end{equation}

For each $n\le n_0\in\ZZ$, $X_n^{n_0, \text{env}}$ is measurable with respect to $\G_n=\tribu\left(U_m(j)\right)_{m\le n, j\in\ZZ}$, so the same holds for $X_n$ and we have condition~(d).

Considering an integer $n_0\le \min \{ N_0(n,i), N_0(n,i+1), N_0(n+1,i) \}$, we have simultaneously:
$X_n(i)=X_n^{n_0, \text{env}} (i)$, $X_n(i+1)=X_n^{n_0, \text{env}} (i+1)$ and 
$X_{n+1}(i)=X_{n+1}^{n_0, \text{env}} (i)$. As the process $\bigl(X_n^{n_0, \text{env}}\bigr)$ satisfies the induction relation~\eqref{eq:rec_env} at $(n+1,i)$, it is therefore the same for $\bigl(X_n\bigr)$. This proves that the process $\bigl(X_n\bigr)_{n\in\ZZ}$ actually evolves according to to the ACP $\te$, and we have conditions (c) and (e). 
Moreover, from~\eqref{eq:shift_env} we get for each $n,i\in\ZZ$
\begin{equation*}
\label{eq:shift_env2}
 X_n(i) = X_0(0)\circ\sigma_{n,i}.
\end{equation*}
This gives on the one side that each row $X_n$ is a $\sigma$-process, so we have condition (a), and on the other side we also get $X_n=X_0\circ \sigma_{n,0}$. The rows $X_n$ therefore all follow the same law, which has to be $\mu$ by Corollary~\ref{cor:te_ergodic}, and we have condition (b). This concludes the proof of Theorem~\ref{thm:fte_dyn_standard}.

\begin{remark}
 As already pointed out in the sketch of the proof, the argument relies on inequality~\eqref{eq:unif_minoration}, hence the result remains true for any probabilistic cellular automaton satisfying such inequality. In particular the method can be applied to any probabilistic cellular automaton on $\AA^\ZZ$ which is a random perturbation of a deterministic cellular automaton by addition of a random error $\xi$, provided the law of each $\xi_n(i)$ is bounded below by $\epst\,\U_\AA$. 
 
 We can even note that, in~\eqref{eq:unif_minoration}, the uniform measure $\U_\AA$ does not play a particular role and can be replaced by \emph{any} fixed probability measure on $\AA$ (we just have to adapt the length of the subintervals $(I_a)_{a\in\AA}$ accordingly). Therefore the argument also applies to any probabilistic cellular automaton $\gamma$ on $\AA^\ZZ$ for which the conditional law of $X_{n+1}(i)$ is of the form
 \[
  \PP\bigl(X_{n+1}(i)=a \,|\, \tribu(X_m:m\le n)\bigr) = f_\gamma\bigl(a\,|\,X_n(i),X_n(i+1)\bigr),
 \]
 and the local transition rules $f_\gamma$ satisfy
 \[
  \forall a,b,c\in\AA,\ f_\gamma(a\,|\,bc) \ge \epst\rho(a),
 \]
for some fixed probability measure $\rho$ on $\AA$ (here no algebraic assumption on $\AA$ is needed).
\end{remark}

\subsubsection{Questions about the factor filtration $(\fte,\sigma)$}

When $\eps$ satisfies~\eqref{eq:epsilon2}, we prove the dynamical standardness of $(\fte,\sigma)$ by immersing it into a factor filtration $(\G,\sigma)$ dynamically of product type. The way we construct $\G$, it is clear that it carries more information than $\fte$, so that $\fte$ is a strict subfiltration of $\G$. Thus we may ask whether, in this case, $(\fte,\sigma)$ is itself a factor filtration dynamically of product type.

\medskip

Another natural question is of course whether the dynamical standardness of $(\fte,\sigma)$ persists as $\eps$ gets close to 0, which prevents the percolation argument to apply. One could already ask whether, for $\eps$ close to 0, the criterion of dynamical I-cosiness is satisfied for the factor filtration $(\fte,\sigma)$. We do not have the answer to this question, but we provide below a somewhat naive argument suggesting that $(\fte,\sigma)$ might not be dynamically I-cosy when $\eps$ is close enough to 0. 

We consider the case where the group $\AA$ is $\ZZ/2\ZZ$. In an attempt to establish dynamical I-cosiness, we try to realize a real-time dynamical coupling of $(\fte,\sigma)$, independent in the distant past, and for which the two copies $X'$ and $X''$ of the process $X=(X_n)$ satisfy $X'_0(0)=X''_0(0)$ with probability close to 1. We want to construct this coupling in a dynamical system $(\Omega,\PP,T)$, in which for some $n_0<0$ we already have two independent copies $(X'_n)_{n\le n_0}$ and $(X''_n)_{n\le n_0}$ of $(X_n)_{n\le n_0}$ (of course, each $X'_n$ and each $X''_n$ is supposed to be a $T$-process of law $\mu$). We assume that we also have an independent family $(U_n)_{n\ge n_0+1}=\bigl(U_n(i):i\in\ZZ\bigr)_{n\ge n_0+1}$ of $T$-processes, each $U_n$ being of law $\lambda^{\otimes \ZZ}$ ($\lambda$ is the Lebesgue measure on $[0,1]$), and the $U_n$'s being independent. We want to use this family to construct error processes $(\xi'_n)_{n\ge n_0+1}$ and $(\xi''_n)_{n\ge n_0+1}$, and then complete the processes $X'$ and $X''$ inductively via the usual relations 
\[ X'_{n}=\tau X'_{n-1}+\xi'_n\text{ and }X''_{n}=\tau X''_{n-1}+\xi''_n\quad(n\ge n_0+1). \]
As in Section~\ref{sec:fte_standard}, we still want to define the error processes in the form 
$\xi'_n(i)=g'_{n,i}\bigl(U_n(i)\bigr)$ and $\xi''_n(i)=g''_{n,i}\bigl(U_n(i)\bigr)$, where $g'_{n,i}$ and $g''_{n,i}$ are random updating functions, but we point out the essential difference with the construction in Section~\ref{sec:fte_standard}: we now want the law of the coupling to be invariant with respect to the left shift of coordinates. This means that these updating function are still allowed to depend on the realizations of the processes up to time $n-1$, but in an equivariant way. In other words, they can be chosen according to what we see around the site $(n,i)$, but the rule for the choice should be independent of $i$.  

We also introduce the auxiliary process $Z:=X'-X''$, which satisfies the same relation~\eqref{eq:evolution_Zn} for $n\ge n_0+1$. To get dynamical I-cosiness, we have to realize the coupling in such a way that 
\[ \PP\bigl(Z_0(i)=0\bigr)\tend{n_0}{-\infty}1 \] 
(note that the probability on the left-hand side does not depend on $i$).

Here is a natural idea to get what we are aiming at: we wish to encourage as much as possible the appearance of $0$'s in the $Z$ process. For this, let us observe that we have for each $n,i\in\ZZ$
 \begin{align}
     \label{eq:minmax}
     \begin{split}
     0 \leq \PP\bigl({Z}_{n}(i)  \neq \tau {Z}_{n-1}(i)\bigr)  
       & =  \PP\bigl(\xi'_n(i)  \neq \xi''_n(i)\bigr) \\
       & \leq \PP\bigl(\xi'_n(i)  \neq 0 \text{ or } \xi''_n(i)\neq 0\bigr) \\
       & \leq \PP\bigl(\xi'_n(i)  \neq 0\bigr) + \PP\bigl(\xi''_n(i)\neq 0\bigr) \\       
       & \leq 2 \eps.
	\end{split}
	\end{align}
Moreover, we have the possibility to make the inequality on the left-hand side an equality, by deciding to take $g'_{n,i}=g''_{n,i}$ (we call it \emph{Option~0}). On the other hand, we can also choose to turn the right-hand side inequalities into equalities, by choosing $g'_{n,i}$ and $g''_{n,i}$ in such a way that the subsets $\{u\in[0,1]:g'_{n,i}(u)=1\}$ and $\{u\in[0,1]:g''_{n,i}(u)=1\}$ are disjoint (we call it \emph{Option~1}). So a natural dynamical coupling is obtained by the following strategy: knowing the processes up to time $n-1$, we compute $\tau Z_{n-1}(i)$. If $\tau Z_{n-1}(i)=0$ we decide to keep this 0, by choosing Option~0. If $\tau Z_{n-1}(i)=1$, we maximize the probability that ${Z}_{n}(i)$ will still be 0 by choosing Option~1.

Applying systematically this strategy, we are left with a process $(Z_n)_{n\ge n_0}$ taking values in $\ZZ/2\ZZ$,  which evolves according to the following PCA rule: for each $n,i$, if $\tau Z_{n-1}(i)=0$ then $Z_n(i)=0$, otherwise $Z_n(i)=0$ with probability $2\eps$. Observe that the initial configuration $Z_{n_0}$ has law $\U_{\ZZ/2\ZZ}^{\otimes\ZZ}$, so that the initial density of 1's is $1/2$. The question now is whether those 1's can survive over the long term?

It turns out that this type of evolution falls into the domain of a family of PCA's studied by Bramson and Neuhauser~\cite{bramson1994}, who have proved that if $\eps$ is small enough, then symbols 1 survive. More precisely, we can state the following consequence of their result:
\begin{theo}[Bramson-Neuhauser]
	In the coupling described above, for $\eps>0$ small enough, there exists $\rho=\rho(\eps)>0$ such that 
	\[ \liminf_{n_0\to-\infty} \PP\bigl(Z_0(0)=1\bigr) \geq \rho. \]
\end{theo}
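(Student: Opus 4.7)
The plan is to recognize the auxiliary process $(Z_n)$ as a translation-invariant nearest-neighbour probabilistic cellular automaton on $\{0,1\}^\ZZ$ lying within the scope of the survival results of~\cite{bramson1994}, and to invoke those results. First, I would write the transition rule explicitly: conditionally on $Z_{n-1}$, the components $\bigl(Z_n(i)\bigr)_{i\in\ZZ}$ are independent with
\[
 \PP\bigl(Z_n(i)=1\mid Z_{n-1}\bigr) = (1-2\eps)\,\ind{Z_{n-1}(i)\ne Z_{n-1}(i+1)}.
\]
Thus $Z$ is a small perturbation of the deterministic XOR rule $\tau$, obtained by independently replacing each site that would deterministically be a $1$ with a $0$ with probability $2\eps$. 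The Dirac mass on the all-$0$ configuration is trivially invariant; the substance of the statement is that for small $\eps$ this is not the only translation-invariant steady state, and that starting from a product measure with density $1/2$ of $1$'s the density does not collapse to $0$.

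Second, I would verify that our PCA falls within the hypotheses of the Bramson--Neuhauser theorem (or a straightforward adaptation of it). Their main result says, roughly, that for a large class of one-dimensional two-state PCAs obtained as small monotone perturbations of a nontrivial deterministic rule, there is a critical noise threshold below which any translation-invariant initial law with positive density of $1$'s stays bounded below in density uniformly in time. Once this is granted, the conclusion is quick: by construction $\L(Z_{n_0}) = \U_{\{0,1\}}^{\otimes\ZZ}$ has density $1/2$ of $1$'s, so for $\eps$ small enough there exists $\rho=\rho(\eps)>0$ with $\PP\bigl(Z_0(0)=1\bigr)\ge\rho$ for all sufficiently negative $n_0$; taking $\liminf_{n_0\to-\infty}$ gives the claim. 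Translation invariance in space of the coupling is what lets us identify the one-site marginal at the origin with the density of $1$'s in the whole row.

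The main obstacle is that the perturbation of the XOR rule by an asymmetric "$1\to 0$" noise may not be covered verbatim by the precise statements of~\cite{bramson1994}, which are generally formulated for slightly different deterministic rules (e.g.\ majority) or symmetric flip noise. In that case the real work is to adapt their block-renormalization argument to our rule: one fixes a sufficiently large mesoscopic space-time box, identifies a "good block" event under which $1$'s are reproduced in the next block layer with probability tending to $1$ as $\eps\to 0$, and then couples the family of good blocks to a supercritical oriented site percolation process in the spirit of Section~\ref{sec:perco}. Survival of the oriented percolation then transfers to a uniform lower bound on the density of $1$'s in $Z_n$, giving the desired $\rho(\eps)>0$. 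This Peierls-type comparison is standard in the PCA literature but technically involved, and is the only step requiring substantive work beyond rewriting the process in the form above.
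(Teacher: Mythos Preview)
Your proposal is correct and matches the paper's approach: the paper does not give its own proof of this statement, but simply attributes it to Bramson and Neuhauser~\cite{bramson1994}, presenting the theorem as a direct consequence of their survival results for this class of PCAs. Your additional discussion of how one would adapt the block-renormalization/Peierls argument if the XOR-with-asymmetric-noise rule is not covered verbatim goes beyond what the paper provides, and is a reasonable sketch of the underlying mechanism.
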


This shows that this attempt to get dynamical I-cosiness for $(\fte,\sigma)$ fails if $\eps$ is small. 

\subsection{A partial converse to Theorem~\ref{thm:dstd_implies_dic}}
\label{sec:converse}

The question whether dynamical I-cosiness of a factor filtration implies its dynamical standardness (as in the static case) is so far open. However the construction given in Section~\ref{sec:fte_dyn_standard} (for $\eps$ not too close to 0) provides an example where we are able to derive dynamical standardness from a particular form of dynamical I-cosiness. We want now to formalize this kind of situation, which can be viewed as the dynamic counterpart of the so-called \emph{Rosenblatt's self-joining criterion} in~\cite{laurent2011}.

In a dynamical system $(\Omega,\PP,T)$, consider a factor filtration $(\F,T)$ generated by a family $X=(X_n)_{n\le0}$ of  $T$-processes, each $X_n(i)$ taking its values in a fixed finite set $\AA$. We also assume the existence of a factor filtration $(\G,T)$, dynamically of product type, and generated by 
a family of independent $T$-processes $(U_n)_{n\le0}$ satisfying for each $n\le0$:
\begin{itemize}
 \item $U_n$ is independent of $\F_{n-1}\vee\G_{n-1}$,
 \item $\F_n\subset\F_{n-1}\vee\tribu(U_n)$.
\end{itemize}
(The two conditions above mean that $(U_n)_{n\le 0}$ is a \emph{superinnovation} of $\F$, according to Definition~3.10 in~\cite{laurent2011}.)
In particular there exists a measurable map $\phi_n$ such that 
\[ X_n = \phi_n(U_n,X_{n-1},X_{n-2},\ldots). \]
We can also assume without loss of generality that there exists in $(\Omega,\PP,T)$ a copy $X'$ of $X$, whose coordinates $X'_n$ are also $T$-processes, and which is independent of $\G_0\vee\F_0$ (if not, we place ourselves on the product $(\Omega\times\Omega,\PP\otimes\PP,T\times T)$ as in the proof of Lemma~\ref{lemma:dpt_implies_dic}). Now we build a family of dynamical real-time couplings of $(\F,T)$ in the following way: for a fixed $n_0\le0$, define $\bar X^{n_0}={(\bar X_n^{n_0})}_{n\le0}$ by
\begin{itemize}
 \item $\bar X_n^{n_0}=X'_n$ for $n\le n_0$,
 \item then, inductively for $n_0+1\le n\le 0$: $\bar X_n^{n_0}:=\phi_n(U_n,\bar X_{n-1}^{n_0},\bar X_{n-2}^{n_0},\ldots)$.
\end{itemize}
(The important point here is that, for $n\ge n_0+1$, the \emph{same} random variable $U_n$ is involved in the inductive relations giving $X_n$ and $\bar X_n^{n_0}$.) Then the filtration $\bar \F^{n_0}$ generated by $\bar X^{n_0}$ is dynamically isomorphic to $\F$, and the pair $\bigl((\F,T),(\bar\F^{n_0},T\bigr)$ is a dynamical real-time coupling of $(\F,T)$, which is $n_0$ independent.

\begin{definition}[Simple dynamical I-cosiness]
\label{def:sdic}
	If, in the above construction, we have for each integer $m\le0$
	\begin{equation}
        \label{eq:sdc}\PP\Bigl(\bar X^{n_0}_m(0) = X_m(0)\Bigr)\tend{n_0}{-\infty} 1,
	\end{equation}
	then we say that the factor filtration $(\F,T)$ is \emph{simply dynamically I-cosy}.
\end{definition}

Clearly, simple dynamical I-cosiness implies dynamical I-cosiness. Observe also that, when $\eps$ satisfies~\eqref{eq:epsilon2}, the results of Section~\ref{sec:fte_dyn_standard} prove that the factor filtration $(\fte,\sigma)$ is simply dynamically I-cosy.

The following theorem generalizes the conclusion of Section~\ref{sec:fte_dyn_standard}. 

\begin{theo} 
\label{thm:sdic}
If  the factor filtration $(\F,T)$ is simply dynamically I-cosy, then it is dynamically standard.
\end{theo}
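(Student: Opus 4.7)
The plan is to show that $\F$ is already immersed in the factor filtration $(\G,T)$ that appears in the definition of simple dynamical I-cosiness. Since $(\G,T)$ is dynamically of product type by construction, this will place $(\F,T)$ as dynamically immersible (with $\F'=\F$ in Definition~\ref{def:dyn_imm}) in a factor filtration dynamically of product type, which is exactly dynamical standardness. The immersion condition $\F_{n+1}\perpc{\F_n}\G_n$ will then be automatic from the superinnovation assumption: once we know $\F_m\subset\G_m$ for every $m$, any $\F_{n+1}$-measurable $Z$ is $\F_n\vee\tribu(U_{n+1})$-measurable, and since $U_{n+1}$ is independent of $\F_n\vee\G_n$ one checks directly that $\EE[Z\,|\,\F_n\vee\G_n]=\EE[Z\,|\,\F_n]$. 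So the whole work reduces to establishing that $X_m(0)\in\G_m$ for each $m\le0$.

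To get this key inclusion, I start from the construction of $\bar X^{n_0}$: by its very definition, $\bar X^{n_0}_m(0)$ is measurable with respect to $\tribu(X'_n:n\le n_0)\vee\G_m$, and in particular with respect to $\tribu(X')\vee\G_m$. Simple dynamical I-cosiness gives $\bar X^{n_0}_m(0)\to X_m(0)$ in probability as $n_0\to-\infty$, and extracting a subsequence along which the convergence is almost sure then shows that $X_m(0)$ itself is $\tribu(X')\vee\G_m$-measurable. On the other hand $X_m(0)$ is $\F_m$-measurable, and $X'$ has been chosen independent of $\F_0\vee\G_0\supset\F_m\vee\G_m$, so in particular $X'$ is independent of the pair $(X_m(0),\G_m)$. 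Consequently $\EE\bigl[X_m(0)\,|\,\G_m\vee\tribu(X')\bigr]=\EE[X_m(0)\,|\,\G_m]$, while the $\tribu(X')\vee\G_m$-measurability established above gives $\EE\bigl[X_m(0)\,|\,\G_m\vee\tribu(X')\bigr]=X_m(0)$. Combining the two identities yields $X_m(0)=\EE[X_m(0)\,|\,\G_m]\in\G_m$. The $T$-invariance of $\G_m$ then propagates this to the whole $T$-process, $X_m(i)=X_m(0)\circ T^i\in\G_m$ for every $i\in\ZZ$, and so $\F_m\subset\G_m$ for each $m\le 0$.

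The main delicate step I foresee is precisely the inclusion $X_m(0)\in\G_m$: conceptually it is very natural (the information in $X_m(0)$ cannot truly come from $X'$, since $X'$ is independent of $\F_0\vee\G_0$, so any $\tribu(X')\vee\G_m$ representation of the $\F_m$-measurable quantity $X_m(0)$ must collapse down to $\G_m$), but one must check carefully that the independence of $X'$ from $\F_0\vee\G_0$ is strong enough to justify the conditional-expectation identity above. This is a dynamical transposition of the elementary lemma underlying Rosenblatt's self-joining criterion recalled in~\cite{laurent2011}; once that inclusion has been obtained uniformly in $m$, the rest of the argument (the subsequence extraction, the verification of the immersion property from the superinnovation assumption, and the final appeal to Definition~\ref{def:dstd}) is routine.
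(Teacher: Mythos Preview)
Your argument is correct and reaches the same conclusion as the paper, but the key step---proving that $X_m(0)$ is $\G_m$-measurable---is carried out along a genuinely different route.

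The paper's proof conditions on the finite block $U_{n_0+1}^m=(U_{n_0+1},\ldots,U_m)$ and exploits the conditional independence of $\bar X^{n_0}_m(0)$ and $X_m(0)$ given this block to obtain
\[
\PP\bigl(\bar X^{n_0}_m(0)=X_m(0)\bigr)=\EE\Bigl[\sum_{a\in\AA}\PP\bigl(X_m(0)=a\,\big|\,U_{n_0+1}^m\bigr)^2\Bigr]
\le \PP\bigl(X_m(0)=M_{n_0}\bigr),
\]
where $M_{n_0}:=\arg\max_{a\in\AA}\PP\bigl(X_m(0)=a\,|\,U_{n_0+1}^m\bigr)$ is an explicit $\G_m$-measurable estimator of $X_m(0)$; Borel--Cantelli applied along a subsequence then gives $X_m(0)=\lim_k M_{n_k}$ almost surely. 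Your approach instead bypasses the estimator construction entirely: you observe that $\bar X^{n_0}_m(0)\in\tribu(X')\vee\G_m$, pass to an a.s.\ convergent subsequence to get $X_m(0)\in\tribu(X')\vee\G_m$, and then use the independence of $X'$ from $\F_0\vee\G_0$ (hence from $\tribu(X_m(0))\vee\G_m$) to collapse the conditional expectation $\EE[X_m(0)\,|\,\G_m\vee\tribu(X')]$ down to $\EE[X_m(0)\,|\,\G_m]$. This is the elementary lemma behind Rosenblatt's self-joining criterion, and your invocation of it is legitimate. One cosmetic point: since $X_m(0)$ takes values in the abstract finite set $\AA$, writing $\EE[X_m(0)\,|\,\cdot]$ presupposes an embedding $\AA\hookrightarrow\RR$; it is cleaner to apply your argument to each indicator $\ind{\{X_m(0)=a\}}$, $a\in\AA$.

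For the immersion conclusion, the paper simply quotes Proposition~3.11 of~\cite{laurent2011} (a generating superinnovation yields immersion), whereas you reprove the relevant conditional-independence identity by hand; both are fine. Your path is arguably more conceptual and does not rely on the explicit $\arg\max$ manipulation, while the paper's version has the advantage of exhibiting concretely the $\G_m$-measurable approximants of $X_m(0)$.
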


\begin{proof}
We fix some integer $m\le0$. For each $n_0\le m-1$, we denote by $U_{n_0+1}^m$ the collection of $T$-processes $U_{n_0+1},\ldots,U_m$. We have
\begin{align*}
  \PP\Bigl(\bar X^{n_0}_m(0) = X_m(0)\Bigr)
            & = \sum_{a\in\AA} \PP\Bigl(\bar X^{n_0}_m(0) = X_m(0) = a\Bigr) \\
            & = \sum_{a\in\AA} \EE_\PP\left[ \PP\Bigl(\bar X^{n_0}_m(0) = X_m(0) = a \, | \, U_{n_0+1}^m\Bigr) \right].
\end{align*}
But $\bar X^{n_0}_m(0)$ and $X_m(0)$ are independent conditionally to $U_{n_0+1}^m$: indeed, once we know $U_{n_0+1}^m$, $\bar X^{n_0}_m(0)$ only depends on $\bar X_{n_0}$ while $X_m(0)$ only depends on $X_{n_0}$. We thus get
\begin{align*}
  \PP\Bigl(\bar X^{n_0}_m(0) = X_m(0)\Bigr)
            & = \sum_{a\in\AA} \EE_\PP\left[ 
            \PP\Bigl(\bar X^{n_0}_m(0) = a\, | \, U_{n_0+1}^m\Bigr) 
            \PP\Bigl(X_m(0) = a \, | \, U_{n_0+1}^m\Bigr) \right]\\
            & = \sum_{a\in\AA} \EE_\PP\left[
            \PP\Bigl(X_m(0) = a \, | \, U_{n_0+1}^m\Bigr)^2 \right]\\
            & =\EE_\PP\left[ \sum_{a\in\AA} 
            \PP\Bigl(X_m(0) = a \, | \, U_{n_0+1}^m\Bigr)^2 \right].
\end{align*}
Let $M_{n_0}$ be the random variable, measurable with respect to $\tribu(U_{n_0+1}^m)$, and defined by 
\[ M_{n_0} := \arg\max_{a\in\AA} \PP\Bigl(X_m(0) = a \, | \, U_{n_0+1}^m\Bigr). \]
(In case the maximum is reached for several symbols in $\AA$, we take the first of them with respect to some previously chosen total order on $\AA$.) We get
 \begin{align*}
  \PP\Bigl(\bar X^{n_0}_m(0) = X_m(0)\Bigr)
            & \le \EE_\PP\left[ \sum_{a\in\AA} 
            \PP\Bigl(X_m(0) = M_{n_0} \, | \, U_{n_0+1}^m\Bigr)\PP\Bigl(X_m(0) = a \, | \, U_{n_0+1}^m\Bigr) \right]\\
            & = \EE_\PP\Biggl[ \PP\Bigl(X_m(0) = M_{n_0} \, | \, U_{n_0+1}^m\Bigr) \underbrace{\sum_{a\in\AA} 
            \PP\Bigl(X_m(0) = a \, | \, U_{n_0+1}^m\Bigr)}_{=1} \Biggr]\\
            & = \PP\Bigl(X_m(0) = M_{n_0} \Bigr).
\end{align*}
But we assumed that $(\F,T)$ is simply dynamically I-cosy. So we have~\eqref{eq:sdc}, which yields
\[ \PP\Bigl(X_m(0) = M_{n_0} \Bigr) \tend{n_0}{-\infty} 1. \] 
For each integer $k\ge1$, we can thus find $n_k\le m-1$ such that 
\[ \PP\Bigl(X_m(0) \neq M_{n_k} \Bigr) < 2^{-k}. \] 
By Borel-Cantelli, we get $X_m(0)=\lim_{k\to\infty}M_{n_k}$ ($\PP$-a.s.). Moreover, $M_{n_k}$ is always $\G_m$-measurable. Therefore $X_m(0)$ is measurable with respect to $\G_{m}$, and since the sigma-algebras are factor sigma-algebras, the whole $T$-process $X_m$ is $\G_{m}$-measurable. 

This already proves that $\F$ is a subfiltration of the natural filtration $\G$ of the process $(U_n)_{n\le 0}$, hence $(U_n)_{n\le 0}$ is a \emph{generating} superinnovation of $\F$. By proposition~3.11 in~\cite{laurent2011}, it follows that $\F$ is immersed in $\G$. Since the factor filtration $(\G,T)$ is dynamically of product type, we get the dynamical standardity of $(\F,T)$.

\end{proof}

\section*{Acknowledgements}

The authors wish to thank Irène Marcovici for fruitful conversations which inspired the subject of this paper, Séverin Benzoni, Christophe Leuridan, Alejandro Maass  and an anonymous referee for their careful reading, relevant questions and valuable comments.

\bibliography{filtration-CA}

\providecommand{\bysame}{\leavevmode\hbox to3em{\hrulefill}\thinspace}
\providecommand{\MR}{\relax\ifhmode\unskip\space\fi MR }
\providecommand{\MRhref}[2]{%
  \href{http://www.ams.org/mathscinet-getitem?mr=#1}{#2}
}
\providecommand{\href}[2]{#2}
\begin{thebibliography}{10}

\bibitem{bramson1994}
Maury Bramson and Claudia Neuhauser, \emph{Survival of one-dimensional cellular
  automata under random perturbations}, Ann. Probab. \textbf{22} (1994), no.~1,
  244--263.

\bibitem{BMMSM2006}
X.~{Bressaud}, A.~{Maass}, S.~{Martinez}, and J.~{San Martin},
  \emph{{Stationary processes whose filtrations are standard}}, {Ann. Probab.}
  \textbf{34} (2006), no.~4, 1589--1600 (English).

\bibitem{CL2017}
Ga\"el {Ceillier} and Christophe {Leuridan}, \emph{{Sufficient conditions for
  the filtration of a stationary processes to be standard}}, {Probab. Theory
  Relat. Fields} \textbf{167} (2017), no.~3-4, 979--999 (English).

\bibitem{joinings}
Thierry {de la Rue}, \emph{{An introduction to joinings in ergodic theory.}},
  {Discrete Contin. Dyn. Syst.} \textbf{15} (2006), no.~1, 121--142 (English).

\bibitem{DFST1996}
Lester {Dubins}, Jacob {Feldman}, Meir {Smorodinsky}, and Boris {Tsirelson},
  \emph{{Decreasing sequences of \(\sigma\)-fields and a measure change for
  Brownian motion. I}}, {Ann. Probab.} \textbf{24} (1996), no.~2, 882--904
  (English).

\bibitem{Emery-Scha}
M.~{\'Emery} and W.~{Schachermayer}, \emph{{On Vershik's standardness criterion
  and Tsirelson's notion of cosiness.}}, {S\'eminaire de Probabilit\'es XXXV},
  Berlin: Springer, 2001, pp.~265--305 (English).

\bibitem{furstenberg67}
Harry {Furstenberg}, \emph{{Disjointness in ergodic theory, minimal sets, and a
  problem in Diophantine approximation.}}, {Math. Syst. Theory} \textbf{1}
  (1967), 1--49 (English).

\bibitem{Hoffman2000}
Christopher {Hoffman}, \emph{{A zero entropy \(T\) such that the \([T,
  {\operatorname{Id}}]\) endomorphism is nonstandard}}, {Proc. Am. Math. Soc.}
  \textbf{128} (2000), no.~1, 183--188 (English).

\bibitem{HR2002}
Christopher {Hoffman} and Daniel {Rudolph}, \emph{{A dyadic endomorphism which
  is Bernoulli but not standard}}, {Isr. J. Math.} \textbf{130} (2002),
  365--379 (English).

\bibitem{LanthierPhD}
Paul Lanthier, \emph{Aspects ergodiques et algébriques des automates
  cellulaires}, Ph.D. thesis, Université de Rouen Normandie, 2020.

\bibitem{laurent2011}
St\'ephane {Laurent}, \emph{{On standardness and I-cosiness.}}, {S\'eminaire de
  Probabilit\'es XLIII, Poitiers, France, Juin 2009.}, Berlin: Springer, 2011,
  pp.~127--186 (English).

\bibitem{Liggett}
Thomas~M. {Liggett}, \emph{{Survival of discrete time growth models, with
  applications to oriented percolation}}, {Ann. Appl. Probab.} \textbf{5}
  (1995), no.~3, 613--636 (English).

\bibitem{MarcoArticle}
Ir\`ene {Marcovici}, \emph{{Ergodicity of noisy cellular automata: the coupling
  method and beyond.}}, {Pursuit of the universal. 12th conference on
  computability in Europe, CiE 2016, Paris, France, June 27 -- July 1, 2016.
  Proceedings}, Cham: Springer, 2016, pp.~153--163 (English).

\bibitem{Smoro}
Meir {Smorodinsky}, \emph{{Processes with no standard extension.}}, {Isr. J.
  Math.} \textbf{107} (1998), 327--331 (English).

\bibitem{Tsirelson}
B.~{Tsirelson}, \emph{{Triple points: From non-Brownian filtrations to harmonic
  measures.}}, {Geom. Funct. Anal.} \textbf{7} (1997), no.~6, 1096--1142
  (English).

\bibitem{Vershik1970}
A.~M. {Vershik}, \emph{{Decreasing sequences of measurable partitions and their
  applications}}, {Sov. Math., Dokl.} \textbf{11} (1970), 1007--1011 (English).

\bibitem{vershik1994}
\bysame, \emph{{The theory of decreasing sequences of measurable partitions.}},
  {St. Petersbg. Math. J.} \textbf{6} (1994), no.~4, 1--68 (English).

\end{thebibliography}

\end{document}